\documentclass[11pt,reqno]{amsart}

\usepackage{amsmath, amsfonts, amsthm, amssymb, graphicx, amscd,color,colortbl,}
\usepackage{float,epsf,subfigure}
\usepackage[english]{babel}

\textwidth=15.0cm \textheight=21.0cm \hoffset=-1.1cm \voffset=-0.5cm
\numberwithin{equation}{section} \theoremstyle{plain}
\newtheorem{theorem}{Theorem}[section]
\newtheorem{lemma}[theorem]{Lemma}
\newtheorem{proposition}[theorem]{Proposition}

\theoremstyle{definition}

\theoremstyle{remark}
\newtheorem{remark}[theorem]{Remark}

\theoremstyle{example}

%\numberwithin{equation}{section}

\renewcommand{\a}{\alpha}

\renewcommand{\r}{\rho}

\newcommand{\n}{\mathbf{n}}

\def\bes{\begin{equation*}}
\def\ees{\end{equation*}}

\newcommand{\ra}{\rightarrow}

\newcommand{\be}{\begin{equation}}
\newcommand{\ee}{\end{equation}}
\newcommand{\bee}{\begin{eqnarray}}
\newcommand{\eee}{\end{eqnarray}}

\setlength{\textheight}{20cm} \setlength{\textwidth}{14cm}

\begin{document}
\title[Global Steady Subsonic Euler Flows through Infinitely Long Nozzles]{Global Steady
Subsonic Flows through Infinitely Long Nozzles for the Full Euler Equations}
\author{Gui-Qiang Chen $\qquad$ Xuemei Deng $\qquad$ Wei Xiang}

\address{Gui-Qiang G. Chen, School of Mathematical Sciences, Fudan University,
 Shanghai 200433, China; Mathematical Institute, University of Oxford,
         Oxford, OX1 3LB, UK; Department of Mathematics, Northwestern University,
         Evanston, IL 60208, USA}
 \email{\tt chengq@maths.ox.ac.uk}

\address{Xuemei Deng, College of Science, China Three Gorges University, Yichang, Hubei 443002, China;
         Mathematical Institute, University of Oxford,
         Oxford, OX1 3LB, UK; School of Mathematical Sciences, Xiamen University,
         Xiamen, Fujian 361005, China}
\email{\tt dmeimeisx@yahoo.com.cn}

\address{Wei Xiang, Mathematical Institute, University of Oxford,
         Oxford, OX1 3LB, UK; and School of Mathematical Sciences, Fudan University,
 Shanghai 200433, China}
         \email{\tt xiangwei0818@gmail.com}

\keywords{Full Euler equations, steady flows, global subsonic flows, infinitely long nozzles, existence,
asymptotic behavior, critical mass flux, stream function, entropy function, Bernoulli function, reduction,
second-order nonlinear equations, supersonic bubbles}

\subjclass[2010]{Primary: 76G25,35M20,35F30,35J70,35J66,76N10; Secondary: 35B40, 35B65}

\date{\today}
\maketitle

\begin{abstract}
We are concerned with global steady subsonic flows through general infinitely long nozzles
for the full Euler equations. The problem is formulated as a boundary value problem in the unbounded domain
for a nonlinear elliptic equation of second order in terms of the stream function.
It is established that, when the oscillation of the entropy and Bernoulli functions at the upstream
is sufficiently small in $C^{1,1}$ and the mass flux is in a suitable regime,
there exists a unique global subsonic solution in a suitable class of general nozzles.
The assumptions are required to prevent from the occurrence of supersonic bubbles inside the nozzles.
The asymptotic behavior of subsonic flows at the downstream and upstream,
as well as the critical mass flux, has been clarified.
\end{abstract}

\section{Introduction}
We are concerned with global steady subsonic flows through general infinitely long nozzles for
the full Euler equations (without the isentropic and irrotational requirement).
The two-dimensional steady full Euler equations take the following form:
\bee
&&(\rho u)_{x_{1}}+(\rho v)_{x_{2}}=0,\label{1.1}\\
&&(\rho u^{2})_{x_{1}}+(\rho uv)_{x_{2}}+p_{x_{1}}=0,\label{1.2}\\
&&(\rho uv)_{x_{1}}+(\rho v^{2})_{x_{2}}+p_{x_{2}}=0,\label{1.3a}\\
&&(\rho u(E+\frac{p}{\rho}))_{x_{1}}+(\rho v(E+\frac{p}{\rho}))_{x_{2}}=0,\label{1.4a}
\eee
where $\rho$, $(u,v)$, $p$, and $E$ denote the density, velocity, pressure, and total energy respectively. Moreover,
\be
E=\frac{1}{2}(u^{2}+v^{2})+\frac{p}{(\gamma-1)\rho}
\ee
with adiabatic exponent $\gamma>1$.

Consider flows through an infinitely long nozzle given by
$$
\Omega=\{(x_{1},x_{2})\, :\, f_{1}(x_{1})<x_{2}<f_{2}(x_{1}),-\infty<x_{1}<\infty\},
$$
with the nozzle wall $\partial\Omega:=W_1\cup W_2$,
where
$$
W_{i}=\{(x_{1},x_{2})\,:\, x_{2}=f_{i}(x_{1}),-\infty<x_{1}<\infty\}, \quad i=1,2,
$$
as in Fig \ref{n}.

\bigskip

\begin{figure}[h!]
  \centering
      \includegraphics[width=0.62\textwidth]{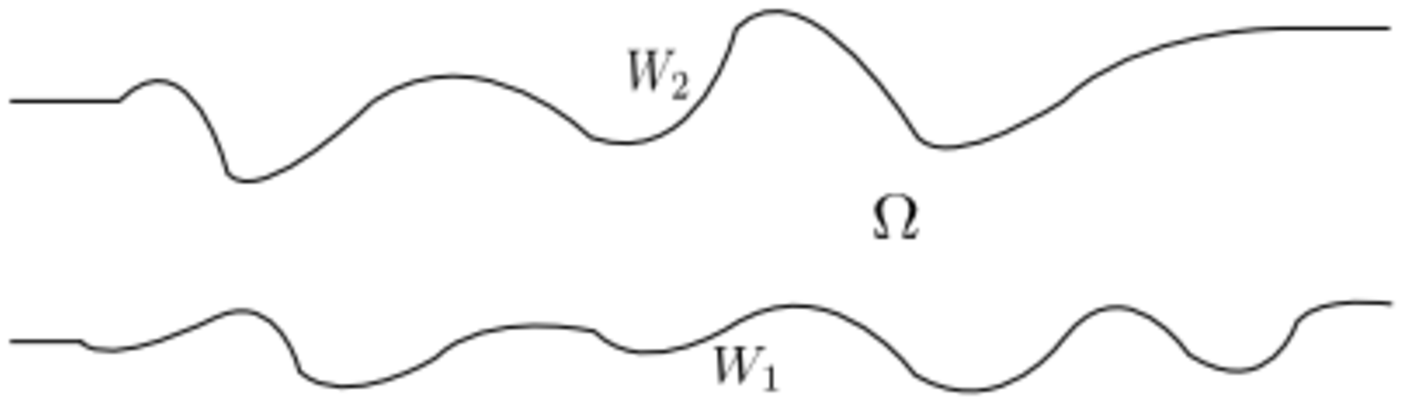}
  \caption{Infinite nozzle}
\label{n}
\end{figure}

Suppose that
$W_1$ and $W_2$ satisfy
\bee
&&f_{2}(x_1)>f_{1}(x_1) \qquad\qquad\qquad\,\,\,  \text{for}\  x_{1}\in(-\infty,\infty),\label{1.3}\\
&&f_{1}(x_1)\rightarrow 0,\  f_{2}(x_1)\rightarrow 1 \qquad\quad \text{as}\  x_{1}\rightarrow-\infty\quad\,\,  \text{in } C^{2,\alpha},\label{1.4}\\
&&f_{1}(x_1)\rightarrow a,\ f_{2}(x_1)\rightarrow b >a \quad\,\,
\text{as}\ x_{1}\rightarrow\infty\qquad\text{in } C^{2,\alpha}, \eee
and there exists $\alpha>0$ such that \be\label{1.6}
\|f_{i}\|_{C^{2,\alpha}(\mathbb{R})}\leq C, \qquad i=1,2, \ee for
some positive constant  $C$. It follows that $\Omega$ satisfies the
uniform exterior sphere condition with some uniform radius $r>0$.

Suppose that the nozzle walls are solid so that the flow satisfies the slip boundary condition:
\be\label{1.7}
(u,v)\cdot\n=0\qquad \text{on}\ \partial\Omega,
\ee
where $\n$ is the unit outward normal to the nozzle wall $\partial \Omega$.
It follows from  \eqref{1.1} and \eqref{1.7} that
\be\label{1.8}
\int_{\ell}\, (\rho u,\rho v)\cdot \n \, dl\equiv m
\ee
for some constant $m$, where $\ell$ is any curve transversal to the $x_{1}-$direction,
and $\n$ is the normal of $\ell$ in the positive $x_{1}-$direction.

If the flow is away from the vacuum state, it follows from \eqref{1.2}--\eqref{1.4a} that
\be\label{1.9}
(u,v)\cdot\nabla(\ln p-\gamma\ln \rho)=0,
\ee
which implies that $\frac{p}{\rho^{\gamma}}$ is a constant along each streamline,
provided that the solution is $C^{1}$--smooth.
We assume that the entropy function is given in the upstream, i.e.,
\be\label{1.10}
\frac{\gamma p}{(\gamma-1)\rho^{\gamma}}\rightarrow S(x_{2})\qquad\,\,
\text{as}\ x_{1}\rightarrow-\infty,
\ee
where $S(x_{2})$, defined on $[0,1]$, is the entropy.
The sonic speed of the flow is defined by
\begin{equation}\label{1.10a}
c=\sqrt{\frac{\gamma p}{\rho}}.
\end{equation}
By \eqref{1.1} and \eqref{1.4a}, we obtain
\be\label{1.14a}
(u,v)\cdot\nabla\big(\frac{1}{2}(u^{2}+v^{2})+\frac{\gamma p}{(\gamma-1)\rho}\big)=0.
\ee
This implies that $\frac{1}{2}(u^{2}+v^{2})+\frac{\gamma p}{(\gamma-1)\rho}$,
which is called the Bernoulli function, is a constant along each streamline.
We assume that the Bernoulli function is given in the upstream, i.e.,
\be\label{1.12}
\frac{(u^{2}+v^{2})}{2}+\frac{\gamma p}{(\gamma-1)\rho}\rightarrow B(x_{2})\qquad\,\,\,
\text{as}\ x_{1}\rightarrow-\infty,
\ee
where $B(x_2)$ is defined on $[0,1]$.

\bigskip
{\bf Problem 1}. {\it Solve the full Euler system \eqref{1.1}--\eqref{1.4a} with
the boundary condition \eqref{1.7},
the mass flux condition \eqref{1.8}, and the asymptotic
conditions \eqref{1.10} and \eqref{1.12}. }

\bigskip

Set
$$
\underline{S}= \inf\limits_{x_2\in[0,1]}S(x_2),
\qquad
\underline{B}= \inf\limits_{x_2\in[0,1]}B(x_2).
$$

The main results of this paper are the following.

\begin{theorem}[Main Theorem] Let the nozzle walls $\partial\Omega$
satisfy \eqref{1.3}--\eqref{1.6}, and let $\underline{S}>0$ and $\underline{B}>0$.
Then there exists $\delta_0>0$ such that, if
\be\label{1.13}
\|(S-\underline{S}, B-\underline{B})\|_{C^{1,1}([0,1])}\leq\delta
\qquad \mbox{for $0<\delta\leq \delta_0$},
\ee
and
\be\label{1.16a}
(SB^{-\gamma})'(0)\geq0, \qquad  (SB^{-\gamma})'(1)\leq0,
\ee
there exists $\hat{m}\geq2\delta^{1/8}_0$ such that, for any $m\in(\delta^{1/4},\hat{m})$,
there exists a global solution (i.e. a full Euler flow) $(\rho,u,v,p)\in C^{1,\alpha}(\bar{\Omega})$
of {\rm Problem 1} such that
\begin{itemize}
\item[(i)] Subsonicity and positivity of the horizontal velocity:
The flow is uniformly subsonic globally with positive horizontal velocity in the whole nozzle, i.e.,
\be\label{1.16}
\sup\limits_{\overline{\Omega}}(u^2+v^2-c^{2})<0, \qquad \,\, u>0\quad\text{in}\ \overline{\Omega};
\ee

\item[(ii)] Far field behavior:  The flow satisfies
the following asymptotic behavior in the far fields:

\smallskip
\noindent
{\rm (a)}  As $x_1\to -\infty$,
\begin{eqnarray}
&& p\to p_{0}>0,\quad u\to u_{0}(x_{2})>0,\quad
 (v, \rho)\to (0, \rho_0(x_2; p_0)), \label{1.18} \\
%\left(\frac{\gamma p_0}{(\gamma-1)S(x_2)}\right)^{\frac{1}{\gamma}}),
&& \nabla p\rightarrow0,\ \  \nabla u\rightarrow(0,
u_{0}'(x_{2})), \ \  \nabla v\rightarrow0, \ \  \nabla \rho\to (0, \rho_0'(x_2;p_0))
%-\frac{\gamma^{\frac{1-\gamma}{\gamma}}
%p_0^{\frac{1}{\gamma}}S'(x_2)}{(\gamma-1)^{\frac{1}{\gamma}}S^{\frac{\gamma+1}{\gamma}}(x_2)})}
\quad \label{1.19}
\end{eqnarray}
uniformly for $x_{2}\in K_1\Subset(0,1)$;

\smallskip
\noindent
{\rm (b)} As $x_1\to \infty$,
\begin{eqnarray}
&& p\to p_{1}>0,\quad u\to u_{1}(x_{2})>0,\quad
 (v, \rho)\to (0, \rho_1(x_2;p_1)), \label{1.20}\\
% \left(\frac{\gamma p_1}{(\gamma-1)S(x_2)}\right)^{\frac{1}{\gamma}}),
&& \nabla p\rightarrow0,\ \  \nabla u\rightarrow(0,
u_{1}'(x_{2})), \ \  \nabla v\rightarrow0, \ \  \nabla \rho\to (0, \rho_1'(x_2;p_1))
%-\frac{\gamma^{\frac{1-\gamma}{\gamma}}
%p_1^{\frac{1}{\gamma}}S'(x_2)}{(\gamma-1)^{\frac{1}{\gamma}}S^{\frac{\gamma+1}{\gamma}}(x_2)})}
\quad \label{1.21}
\end{eqnarray}
uniformly for  $x_{2}\in K_2\Subset(a,b)$,
where $p_0$ and $p_1$ are both positive constants,
$$
\rho_0(x_2;p_0)=\Big(\frac{\gamma p_0}{(\gamma-1)S(x_2)}\Big)^{\frac{1}{\gamma}},
\qquad
\rho_1(x_2;p_1)= \Big(\frac{\gamma p_1}{(\gamma-1)S(x_2)}\Big)^{\frac{1}{\gamma}},
$$
and
$p_0$, $p_1$, $u_0(x_2)$ and $u_1(x_2)$ can be determined by $m$, $S(x_2)$,
$B(x_2)$, and $b-a$ uniquely;

\item[(iii)] Uniqueness: The full Euler flow of {\rm Problem 1}
satisfying \eqref{1.16} and the asymptotic behavior
\eqref{1.18}--\eqref{1.21} is unique.

\item[(iv)] Critical mass flux:
$\hat{m}$ is the upper critical mass flux for the existence of subsonic flow in the following sense: either
\be\label{1.25}
\sup\limits_{\overline{\Omega}}(u^2+v^2-c^{2})\rightarrow0\qquad\text{as}\ m\rightarrow\hat{m},
\ee
or there is no $\sigma>0$ such that, for all $m\in(\hat{m},\hat{m}+\sigma)$,
there are full Euler flows of {\rm Problem 1}
satisfying
\eqref{1.16},
the asymptotic behavior \eqref{1.18}--\eqref{1.21}, and
\be\label{1.24}
\sup\limits_{m\in(\hat{m},\hat{m}+\sigma)}\sup\limits_{\overline{\Omega}}(c^{2}-(u^2+v^2))>0.
\ee
\end{itemize}
\end{theorem}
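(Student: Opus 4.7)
My plan is to reduce the full Euler system to a single quasilinear elliptic equation for the stream function, solve a subsonically truncated boundary value problem on finite subdomains, and extract the global solution by compactness together with a uniform strict-subsonicity estimate.

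\textbf{Reduction.} The conservation law \eqref{1.1} and the slip condition \eqref{1.7} let me introduce a stream function $\psi$ with $\psi_{x_2}=\rho u$, $\psi_{x_1}=-\rho v$, normalized so that $\psi=0$ on $W_1$ and $\psi=m$ on $W_2$ by \eqref{1.8}. By \eqref{1.9} and \eqref{1.14a}, the entropy $S=\gamma p/((\gamma-1)\rho^\gamma)$ and the Bernoulli quantity $B=\tfrac{1}{2}(u^2+v^2)+\gamma p/((\gamma-1)\rho)$ are constant on each streamline; using the upstream data \eqref{1.10}, \eqref{1.12} to identify streamlines with their upstream height, I can write $S=S(\psi)$ and $B=B(\psi)$ for known functions. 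Combining \eqref{1.2}--\eqref{1.3a} with these relations yields
\begin{equation*}
\nabla\cdot\Big(\frac{\nabla\psi}{\rho}\Big) = \frac{\rho^{\gamma-1}}{\gamma-1}S'(\psi) - B'(\psi),
\end{equation*}
where $\rho=\rho(\psi,|\nabla\psi|,S(\psi),B(\psi))$ is determined implicitly via Bernoulli; the PDE is uniformly elliptic precisely when the flow is strictly subsonic.

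\textbf{Truncation and the main obstacle.} On $\Omega_R=\Omega\cap\{|x_1|<R\}$ I would impose Dirichlet data on the artificial boundary $\{x_1=\pm R\}$ equal to the unique one-dimensional subsonic profile with prescribed $(S,B,m)$ and cross-section widths $1$ and $b-a$; such profiles exist for $m$ below a one-dimensional critical flux and determine the constants $p_0,p_1,u_0,u_1$ appearing in (ii). To guarantee ellipticity during the solution procedure I further replace $\rho$ by a smooth cutoff $\hat\rho$ that equals $\rho$ on $\{|\nabla\psi|^2\le(1-\eta)\rho^2 c^2\}$ and stays strictly subsonic elsewhere, making the modified equation uniformly elliptic. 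Leray--Schauder degree then produces $\psi^R\in C^{2,\alpha}(\overline{\Omega_R})$. The decisive step, and the main obstacle, is the uniform strict subsonic estimate $\sup_{\Omega_R}(u^2+v^2-c^2)\le-\eta_0<0$ independent of $R$, which removes the cutoff: interior to $\Omega_R$ it follows from a maximum principle for $q=u^2+v^2-c^2$, which satisfies an elliptic inequality of the form $Lq\le C(\delta+m)$ thanks to closeness of $\psi^R$ to the one-dimensional background in $C^{1,\alpha}$ (itself coming from Schauder theory and the $C^{1,1}$ smallness \eqref{1.13}); on the walls $W_1,W_2$ the hypothesis \eqref{1.16a} on the signs of $(SB^{-\gamma})'(0)$ and $(SB^{-\gamma})'(1)$ is exactly the condition needed to apply the Hopf lemma and rule out a boundary supersonic bubble. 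The range $m\in(\delta^{1/4},\hat m)$ is chosen precisely so that these estimates close in a bootstrap.

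\textbf{Passage to the limit and the remaining conclusions.} With uniform $C^{1,\alpha}$ bounds in hand, Arzel\`a--Ascoli produces a limit $\psi$ in $C^{1,\alpha/2}_{\mathrm{loc}}(\overline{\Omega})$ solving the untruncated equation on $\Omega$ and inheriting the strict subsonic bound, giving (i); positivity of $u$ follows from the strong maximum principle together with monotonicity of $\psi$ in $x_2$. For (ii), translating $\psi(\cdot+L,\cdot)$ and extracting weak limits as $L\to\pm\infty$ yields a solution on a straight infinite strip matching the upstream or downstream data, which by one-dimensional uniqueness must coincide with the explicit profile determined by $(S,B,m)$ and the cross-section width; interior Schauder estimates then upgrade pointwise convergence to the gradient convergence \eqref{1.19} and \eqref{1.21}. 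Uniqueness (iii) follows because the difference of two such solutions obeys a linear uniformly elliptic equation (by the subsonic bound) with decay at $\pm\infty$, so the maximum principle forces it to vanish. For the critical mass flux (iv), I define $\hat m$ as the supremum of fluxes admitting a strictly subsonic solution; if neither \eqref{1.25} nor the non-existence alternative holds, an implicit-function-theorem continuation at a nondegenerate strictly subsonic flow at $m=\hat m$ would continue the branch past $\hat m$, contradicting the definition of $\hat m$.
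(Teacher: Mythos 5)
Your reduction to the stream-function equation, the truncation on bounded subdomains with an elliptic cut-off, and the blow-up/translation argument for the far-field limits all match the paper's strategy. But the decisive step — the uniform strict subsonicity estimate — is where your proposal has a genuine gap. You assert that $q=u^2+v^2-c^2$ satisfies an elliptic inequality $Lq\le C(\delta+m)$ and that the hypothesis \eqref{1.16a} is ``exactly the condition needed to apply the Hopf lemma and rule out a boundary supersonic bubble.'' Neither claim is substantiated, and the second misattributes the role of \eqref{1.16a}. A maximum principle for the speed is classical for irrotational isentropic flow, but for the full Euler system with varying $S$ and $B$ the equation for $q$ acquires source terms involving $S'$, $B'$, and second derivatives of $\psi$ that you have not controlled; you would need to derive and verify that inequality, and you do not. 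The paper obtains subsonicity by an entirely different mechanism: a quantitative gradient bound $\|\nabla\psi\|_{C^0}\le \eta C(1+C\delta)+C_\eta(m+C\delta)$ from interpolation and $C^{1,\mu}$ estimates, which is small because $m\le 2\delta_0^{\beta/2}$ and $\delta\le\delta_0$, followed by a contradiction argument through the cut-off function (Proposition 3.1). The hypothesis \eqref{1.16a} is instead used (together with a carefully designed extension of $\mathcal{S}$ and $\mathcal{B}$ outside $[0,m]$, which your proposal omits entirely) to run the maximum principle for $\psi$ itself and conclude $0\le\psi\le m$ (Proposition 4.2) — without which the composed functions $\mathcal{S}(\psi)$, $\mathcal{B}(\psi)$ are not even the physical ones — and again in the Hopf-lemma argument for $\psi_{x_2}>0$ on the walls (Lemma 6.1), which concerns positivity of $u$, not subsonicity.

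Two further points. For uniqueness (iii), the difference of two solutions satisfies a linear equation whose zeroth-order coefficient involves $\mathcal{B}''$ and $\mathcal{S}''$ and has no sign, so the maximum principle does not apply directly; the paper instead uses a Caccioppoli-type energy estimate exploiting the smallness of $\delta$ and the Poincar\'e inequality. For the critical mass flux (iv), your implicit-function-theorem continuation requires both that a solution exists at $m=\hat m$ and that the linearization is invertible on the unbounded domain — neither is established, and the first may fail. The paper avoids this by introducing a sequence of cut-off parameters $\varepsilon_n\downarrow 0$, defining $M_n(m)$ and the sets $T_n$, proving left-continuity of $M_n$ in $m$, and taking $\hat m=\lim_n \sup T_n$; the dichotomy in (iv) then follows by contradiction from the definition of the $m_n$. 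These are not cosmetic differences: without the extension of $(\mathcal{S},\mathcal{B})$, the $0\le\psi\le m$ bound, and the cut-off-removal argument, the proof does not close.
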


The assumptions in Theorem 1.1 are required to prevent from the occurrence of supersonic
bubbles inside the infinitely long nozzles.

There has been some literature on the analysis of the infinitely nozzle problems.
For potential flows, Chen-Feldman \cite{ChenFeldman0,ChenFeldman} established the existence
and stability of multidimensional
transonic flows through an infinite nozzle of arbitrary cross-sections;
also see Chen-Dafermos-Slemrod-Wang \cite{CDSW} and Kim \cite{Kim}.
Xie-Xin \cite{xx1} established the existence of global subsonic isentropic flows and obtained
the critical upper bound of mass flux under the assumption that the derivative of
the Bernoulli function equals to zero on the two boundaries.
For the steady full Euler equations,
Chen-Chen-Feldmann \cite{CCF} established the first existence of global transonic flows
in two-dimensional infinite nozzles of slowly varying cross-sections;
also see Chen \cite{JC}.
Motivated by the earlier results, the focus of this paper is on the full Euler equations for
the infinitely nozzle problem with general varying cross-sections by developing some useful new
techniques. Some further related results can be found in Bae-Feldman \cite{BF},
Canic-Keyfitz-Lieberman \cite{CKL},
Glimm-Ji-Li-Zhang-Zheng \cite{GJLZ}, Serre \cite{Serre}, Yuan \cite{Yuan},
and the references cited therein.

We remark that the main difference between our results and those in \cite{xx1}
is that our results allow the varying entropy function, so that the far behavior
of the density and the equation for the stream function is not only determined
by the Bernoulli function, but also by the entropy function.
Thus, it is not clear whether one can directly use the implicit function theorem
to obtain the density with respect to the Bernoulli function at the upstream,
which is the starting point of our study of this problem.
Furthermore, it is not direct to see how the maximum principle can be employed
to locate these solutions of the stream function in the physical interval
and then to extend the existence for small enough momenta which is obtained by
the standard energy estimates to the critical mass flux only by simply making
the assumption on the Bernoulli function on the boundary as in \cite{xx1}.
In this paper, for the far field behavior, we introduce the ratio of the
entropy and Bernoulli function; then, by carefully defining the upper and
lower bounds for the pressure, we find the far field behavior of the pressure
with respect to the others. In order to use the maximum principle, we extend the entropy
function via a special form, under which we find a condition on the ratio of
the entropy and Bernoulli function with some power which looks like but not
exactly the condition on the change of the momenta on the boundary at the upstream.
With the uniform estimates from the maximum principle,
we extend the existence of solutions to the critical mass flux.

The organization of the paper is as follows.
In Section 2, we reformulate the problem as {\it Problem 2} by deriving the governing equation
and boundary conditions for full Euler flows in terms of the stream function,
provided that the Euler flow has a simple topological structure and satisfies
the asymptotic behavior \eqref{1.18}--\eqref{1.21}.
In Section 3, the existence of solutions to a modified elliptic problem is established.
Subsequently, in Section 4, we analyze the asymptotic behavior of solutions in a larger class
and show the uniqueness of the solution to the boundary value problem.
This yields the existence of solutions of the boundary value problem for the stream functions.
In Section 5,  the existence and uniqueness of solutions of Problem 2 are established,
and, in Section 6, some refined estimates for the stream function is derived.
The proof of the main theorem (Theorem 1.1) except the existence part of the critical mass flux
is provided based on the results of Sections 2--5.
In Section 8, we obtain the critical mass flux.
Combining these estimates with the asymptotic behavior obtained in Section 4
yield the existence of full Euler flows which satisfy all the properties
in Theorem 1.1.

\section{Reformulation of the Problem for Stream Functions}
In this section we introduce the stream functions for the two-dimensional steady compressible
full Euler flows and derive an equivalent formulation for the full Euler flows in the nozzles.

\subsection{Equations}
It follows from \eqref{1.1} that there exists a stream function $\psi$ such that
\be\label{2.1}
\psi_{x_{1}}=-\rho v,\quad \psi_{x_{2}}=\rho u.
\ee
Furthermore, from \eqref{1.9}, we have
\be\label{2.2}
p=\frac{\gamma-1}{\gamma}\mathcal{S}(\psi)\rho^{\gamma}.
\ee
By \eqref{1.14a}, the Bernoulli law can be also written as
\be\label{2.3}
\frac{1}{2}|\nabla\psi|^{2}+\mathcal{S}(\psi)\rho^{\gamma+1}=\mathcal{B}(\psi)\rho^{2}.
\ee
In the subsonic region, we have
$$
|\nabla\psi|^{2}<c^{2}\rho^{2}=(\gamma-1)\mathcal{S}(\psi)\rho^{\gamma+1},
$$
which implies
$$
\rho^{\gamma-1}>\frac{2\mathcal{B}}{(\gamma+1)\mathcal{S}}.
$$
Let $\chi=\frac{1}{2}|\nabla\psi|^{2}$ and
$g(\rho,\psi)=\mathcal{B}(\psi)\rho^{2}-\mathcal{S}(\psi)\rho^{\gamma+1}$.
We obtain
$$
\frac{\partial g}{\partial\rho}
=\Big(\frac{2\mathcal{B}}{\mathcal{S}}-(\gamma+1)\rho^{\gamma-1}\Big)\rho \mathcal{S}<0
$$
in the subsonic region. Hence, by the implicit function theorem, there exists a unique
$\rho=\rho(\chi,\psi)$ such that
\begin{equation}\label{2.4a}
\chi=g(\rho,\psi)=\mathcal{B}(\psi)\rho^{2}-\mathcal{S}(\psi)\rho^{\gamma+1}.
\end{equation}
From \eqref{2.4a}, we have
$$
\rho_{\chi}=-\frac{1}{(\gamma+1)\mathcal{S}\rho^{\gamma}-2\mathcal{B}\rho},
\qquad
\rho_{\psi}=\frac{\mathcal{B}'\rho-\mathcal{S}'\rho^{\gamma}}{(\gamma+1)\mathcal{S}\rho^{\gamma-1}-2\mathcal{B}}.
$$
Then we have
\begin{eqnarray}
\rho_{x_{1}}&=&\rho_{\chi}(\psi_{x_{1}}\psi_{x_{1}x_{1}}+\psi_{x_{2}}\psi_{x_{2}x_{1}})+\rho_{\psi}\psi_{x_{1}}\nonumber \\[2mm]
&=&\frac{-\psi_{x_{1}}\psi_{x_{1}x_{1}}-\psi_{x_{2}}\psi_{x_{2}x_{1}}+\psi_{x_{1}}
(\mathcal{B}'\rho^{2}-\mathcal{S}'\rho^{\gamma+1})}{(\gamma+1)\mathcal{S}\rho^{\gamma}-2\mathcal{B}\rho}, \label{2.5}
\end{eqnarray}
\begin{eqnarray}
\rho_{x_{2}}&=&\rho_{\chi}(\psi_{x_{1}}\psi_{x_{1}x_{2}}+\psi_{x_{2}}\psi_{x_{2}x_{2}})
+\rho_{\psi}\psi_{x_{2}}\nonumber\\[2mm]
&=&\frac{-\psi_{x_{1}}\psi_{x_{1}x_{2}}-\psi_{x_{2}}\psi_{x_{2}x_{2}}+\psi_{x_{2}}
(\mathcal{B}'\rho^{2}-\mathcal{S}'\rho^{\gamma+1})}{(\gamma+1)\mathcal{S}\rho^{\gamma}-2\mathcal{B}\rho}. \label{2.6}
\end{eqnarray}

Now we reduce the Euler system into a second-order nonlinear
equation. Multiplying  equation \eqref{1.3a} by
$(\gamma+1)\mathcal{S}\rho^{\gamma}-2\mathcal{B}\rho$, using
expressions \eqref{2.5}--\eqref{2.6}, and making algebraic
manipulations, we obtain \be
\psi_{x_2}\big(a_{ij}(\psi,\nabla\psi)\psi_{x_ix_j}-F(\psi,\nabla\psi)\big)=0,
\ee where
\begin{eqnarray*}
&&a_{11}(\psi,\nabla\psi)=(\gamma-1)\mathcal{S}\rho^{\gamma+1}-\psi^2_{x_2},\\[1.5mm]
&&a_{12}(\psi,\nabla\psi)=a_{21}(\psi,\nabla\psi)=\psi_{x_1}\psi_{x_2},\\[1.5mm]
&&a_{22}(\psi,\nabla\psi)=(\gamma-1)\mathcal{S}\rho^{\gamma+1}-\psi^2_{x_1},\\[1.5mm]
&&F(\psi,\nabla\psi)=\frac{\gamma-1}{\gamma}\rho^{\gamma+3}\big(\gamma\mathcal{S}\mathcal{B}'-
2\mathcal{S}'\mathcal{B}+\mathcal{S}\mathcal{S}'\rho^{\gamma-1}\big).
\end{eqnarray*}
If $u>0$ in $\Omega$, then $\psi_{x_2}>0$. Thus we have
\be\label{2.8}
a_{ij}(\psi,\nabla\psi)\psi_{x_ix_j}=F(\psi,\nabla\psi). \ee
Multiplying \eqref{2.8} by
$\big((\gamma+1)\mathcal{S}\rho^{\gamma+3}-2\mathcal{B}\rho^{4}\big)^{-1}$,
we obtain \be \label{2.9}
\nabla\cdot(\frac{\nabla\psi}{\rho})=\mathcal{B}'\rho-\frac{1}{\gamma}\mathcal{S}'\rho^{\gamma}.
\ee

In summary, we have the following proposition.
\begin{proposition}\label{prop:2.1}
For any smooth flow away from the vacuum state in the nozzle $\Omega$ satisfying \eqref{1.4}--\eqref{1.6},
if the flow is globally subsonic and
\be \label{2.10}
u>0 \qquad \text{in}\ \Omega.
\ee
Then the new system formed by \eqref{2.1}--\eqref{2.3} and \eqref{2.9} is equivalent
to the original Euler equations \eqref{1.1}--\eqref{1.4a}.
\end{proposition}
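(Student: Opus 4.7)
The plan is to verify the equivalence in both directions. The forward direction is essentially already in the derivation preceding the proposition; I would only need to assemble those observations. The reverse direction, recovering \eqref{1.1}--\eqref{1.4a} from \eqref{2.1}--\eqref{2.3} and \eqref{2.9}, requires one extra algebraic identity, namely to recover the second momentum equation that was not used in deriving \eqref{2.9}.

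For the forward direction, since $(\rho u,\rho v)$ is divergence-free by \eqref{1.1}, a stream function $\psi$ satisfying \eqref{2.1} exists. The entropy identity \eqref{1.9}, which follows from \eqref{1.1}--\eqref{1.3a}, says that $\gamma p/((\gamma-1)\rho^\gamma)$ is constant along the level sets of $\psi$, so one can write it as $\mathcal{S}(\psi)$, giving \eqref{2.2}. Similarly, \eqref{1.14a}, a consequence of \eqref{1.1} and \eqref{1.4a}, produces $\mathcal{B}(\psi)$, and expressing the Bernoulli quantity through \eqref{2.1}--\eqref{2.2} yields \eqref{2.3}. The derivation of \eqref{2.9} from \eqref{1.3a} using \eqref{2.5}--\eqref{2.6} is the computation already displayed in the excerpt.

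For the reverse direction, I would assume \eqref{2.1}--\eqref{2.3} and \eqref{2.9}. The continuity equation \eqref{1.1} is immediate from \eqref{2.1}. Also from \eqref{2.1} one has $(u,v)\cdot\nabla\psi\equiv 0$; applying this to $\mathcal{S}(\psi)$ and $\mathcal{B}(\psi)$ recovers \eqref{1.9} and \eqref{1.14a}, and combining \eqref{1.14a} with \eqref{1.1} then gives the energy equation \eqref{1.4a}. Next, the identities \eqref{2.5}--\eqref{2.6} follow from \eqref{2.1}--\eqref{2.3} alone, so the algebraic manipulation leading to \eqref{2.8} reads, as an identity of smooth functions,
\[
\bigl((\gamma+1)\mathcal{S}\rho^{\gamma}-2\mathcal{B}\rho\bigr)\,[\,\text{LHS of \eqref{1.3a}}\,]\,=\,\psi_{x_2}\bigl(a_{ij}\psi_{x_ix_j}-F\bigr).
\]
The prefactor on the left is strictly positive in the subsonic regime (it is precisely the nonvanishing denominator used to solve for $\rho$), and the right-hand side vanishes because \eqref{2.9} is equivalent to \eqref{2.8}. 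Dividing gives \eqref{1.3a}.

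The main obstacle, the only step that requires work beyond reversing the derivation of \eqref{2.9}, is the recovery of \eqref{1.2}. For this I would use the following Crocco-type identity: writing $M_i$ for the left-hand side of the $i$-th momentum equation and using only \eqref{1.1} together with \eqref{2.1}--\eqref{2.3}, a direct computation yields
\[
uM_1+vM_2\,=\,\rho\,(u,v)\cdot\nabla\mathcal{B}(\psi)\,-\,\tfrac{1}{\gamma}\rho^{\gamma}(u,v)\cdot\nabla\mathcal{S}(\psi).
\]
Since $(u,v)\cdot\nabla\psi=0$, the right-hand side vanishes, and combining with the already established $M_2=0$ and the hypothesis $u>0$ gives $M_1=0$, which is \eqref{1.2}. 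This closes the equivalence.
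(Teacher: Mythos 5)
Your proposal is correct and follows the route the paper intends: the paper itself offers no proof beyond the remark that the derivation is ``obviously invertible,'' and your argument is precisely that reversal, carried out carefully (including the observation that the prefactor $(\gamma+1)\mathcal{S}\rho^{\gamma}-2\mathcal{B}\rho$ is strictly positive in the subsonic, non-vacuum regime, so \eqref{2.9} really does return \eqref{1.3a}). The one step that genuinely goes beyond inverting the displayed computation---recovering the first momentum equation \eqref{1.2}, which is never used in deriving \eqref{2.9}---is handled correctly by your Crocco-type identity: expanding $uM_1+vM_2$ using \eqref{1.1} and \eqref{2.1}--\eqref{2.3} does give $\rho\,(u,v)\cdot\nabla\mathcal{B}(\psi)-\tfrac{1}{\gamma}\rho^{\gamma}(u,v)\cdot\nabla\mathcal{S}(\psi)=0$, and together with $M_2=0$ and $u>0$ this forces $M_1=0$.
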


The previous derivation is obviously invertible for the subsonic flow,
so we omit the details of the proof for Proposition \ref{prop:2.1}.
In order to establish the existence of solutions to system \eqref{1.1}--\eqref{1.4a},
it suffices to establish the existence of solutions to system \eqref{2.1}--\eqref{2.3} and \eqref{2.9}
satisfying \eqref{2.10}.

\medskip
\subsection{Relations between $\mathcal{S}(\psi)$, $\mathcal{B}(\psi)$,
and the Asymptotic Behavior of $\psi$ at $x_1\to\pm\infty$}

First, it follows from \eqref{1.7} that the nozzle walls are streamlines,
so $\psi$ is constant on each wall.
By \eqref{1.8} and the fact that $\psi_{x_2}>0$ since $u>0$, we have
\be
0<\psi<m\,\,\, \text{in}\ \Omega,\qquad \psi=0\,\,\, \text{on}\  W_1,\qquad \psi=m\,\,\, \text{on}\  W_2.
\ee

\smallskip
Then we study the density-speed relation by using the entropy
relation \eqref{2.2} and the Bernoulli law \eqref{2.3}. Here, unlike
isentropic flow which does not need to study the entropy relation,
i.e., the entropy function $S(x_2)$, we start from the ratio of
these two functions $B$ and $S$ as follows.

Let $D(x_2)=(BS^{-\frac{1}{\gamma}})(x_2)%\frac{B(x_2)}{S(x_2)}
$. For any $s>0$,
%it is easy to see that
$\bar{\mathfrak{p}}(s)=\frac{\gamma-1}{\gamma}s^{\frac{\gamma}{\gamma-1}}>0$
is the unique solution of
$$
\big(\frac{\gamma\bar{\mathfrak{p}}(s)}{\gamma-1}\big)^{\frac{\gamma-1}{\gamma}}=s.
%:=D(x_2).
$$
Moreover, from \eqref{2.3}, the speed
$$
q(p,x_2;s)=\sqrt{2S^{\frac{1}{\gamma}}(x_2)\Big(s-\big(\frac{\gamma
p}{\gamma-1}\big)^{\frac{\gamma-1}{\gamma}}\Big)}.
$$
Hence, for fixed $s$ and $x_2\in[0,1]$, $q$ is a strictly decreasing
function of $p$ on $[0,\bar{\mathfrak{p}}(s)]$. By the definition of
$\bar{\mathfrak{p}}(s)$, one has
$$
q(\bar{\mathfrak{p}}(s),x_2;s)=0<c(\bar{\mathfrak{p}}(s),x_2).
$$
Now we claim that $q(0,x_2;s)>c(0,x_2)$. Indeed,
$$
q(p,x_2;s)\rightarrow\sqrt{2S^{\frac{1}{\gamma}}(x_2)s}>0  \qquad
\mbox{as $p\rightarrow0$},
$$
and, by the definition of sonic speed, $c(0,x_2)=0$.
Thus, $q(0,x_2)>0=c(0,x_2)$.
This completes the claim.

Since
$c^{2}(p,x_2)=(\gamma-1)S(x_{2})\rho^{\gamma-1}=\big(\gamma^{\gamma-1}(\gamma-1)S(x_2)\big)^{\frac{1}{\gamma}}
p^{\frac{\gamma-1}{\gamma}}$ is an increasing function of $p$, there
exists a unique $\mathfrak{p}(s)\in[0,\bar{\mathfrak{p}}(s)]$ such
that
$$
c^{2}(\mathfrak{p}(s),x_2)=q^{2}(\mathfrak{p}(s),x_2;s).
$$
More precisely,
$$
\mathfrak{p}(s)=\frac{\gamma-1}{\gamma}\big(\frac{2s}{\gamma+1}\big)^{\frac{\gamma}{\gamma-1}}.
$$

In summary, we have

\begin{lemma}\label{lem:2.1}
%For any given $s>0$,
There exist
$\bar{\mathfrak{p}}=\bar{\mathfrak{p}}(s)$,
$\mathfrak{p}=\mathfrak{p}(s)$, and $\Gamma=\Gamma(s,x_2)$ such that
\begin{eqnarray}
&& S^{\frac{1}{\gamma}}(x_2)\big(\frac{\gamma\bar{\mathfrak{p}}(D(x_2))}{\gamma-1}\big)^{\frac{\gamma-1}{\gamma}}=B(x_2), \label{2.12a}\\
&& S^{\frac{1}{\gamma}}(x_2)\big(\frac{\gamma\mathfrak{p}(s)}{\gamma-1}\big)^{\frac{\gamma-1}{\gamma}}+\frac{\Gamma^{2}(s,x_2)}{2}=B(x_2), \label{2.12b} \\
&& c^{2}(\mathfrak{p}(s),x_2)=\Gamma^{2}(s,x_2), \label{2.12c}
\end{eqnarray}
where $\bar{\mathfrak{p}}(s)$, $\mathfrak{p}(s)$, and
$\Gamma(s,x_2)$ are the maximum pressure, critical pressure, and
critical speed, respectively, for the fixed ratio $s$ of the
Bernoulli function and the entropy function.
\end{lemma}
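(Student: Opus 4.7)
The plan is to extract the three identities directly from the one-dimensional calculus sketched in the paragraphs immediately preceding the lemma. First I would set $\bar{\mathfrak{p}}(s) := \frac{\gamma-1}{\gamma}\, s^{\gamma/(\gamma-1)}$, as already identified in the text; identity \eqref{2.12a} is then just the defining relation $\bigl(\gamma\bar{\mathfrak{p}}(s)/(\gamma-1)\bigr)^{(\gamma-1)/\gamma}=s$ evaluated at $s=D(x_2)=B(x_2)S^{-1/\gamma}(x_2)$ and multiplied through by $S^{1/\gamma}(x_2)$, using the cancellation $S^{1/\gamma}(x_2)\,D(x_2)=B(x_2)$.

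The heart of the matter is the existence and uniqueness of $\mathfrak{p}(s)\in(0,\bar{\mathfrak{p}}(s))$ at which $q^2(p,x_2;s)=c^2(p,x_2)$. Both ingredients are already laid out above the lemma: $p\mapsto q^2(p,x_2;s)$ is strictly decreasing on $[0,\bar{\mathfrak{p}}(s)]$ from $2S^{1/\gamma}(x_2)\,s>0$ down to $0$, whereas $p\mapsto c^2(p,x_2)\propto p^{(\gamma-1)/\gamma}$ is strictly increasing from $0$ to a positive value. The intermediate value theorem, combined with strict monotonicity, then yields a unique crossing $\mathfrak{p}(s)$, and I would simply define $\Gamma^2(s,x_2):=c^2(\mathfrak{p}(s),x_2)$. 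This gives \eqref{2.12c} by construction.

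For \eqref{2.12b} I would rewrite the very definition of $q$ as
\[
\tfrac12 q^2(p,x_2;s)+S^{1/\gamma}(x_2)\Bigl(\tfrac{\gamma p}{\gamma-1}\Bigr)^{(\gamma-1)/\gamma}=S^{1/\gamma}(x_2)\,s,
\]
specialize to $p=\mathfrak{p}(s)$, substitute $q^2(\mathfrak{p}(s),x_2;s)=\Gamma^2(s,x_2)$, and finally take $s=D(x_2)$ so that the right-hand side collapses to $B(x_2)$. The closed form $\mathfrak{p}(s)=\frac{\gamma-1}{\gamma}\bigl(\frac{2s}{\gamma+1}\bigr)^{\gamma/(\gamma-1)}$ can then be read off by equating the expressions for $c^2$ and $q^2$ at $p=\mathfrak{p}(s)$ and solving the resulting one-variable equation in $t:=p^{(\gamma-1)/\gamma}$; the factor $\gamma+1$ emerges from the identity $(\gamma-1)+2=\gamma+1$ after clearing denominators of the form $(\gamma-1)^{(\gamma-1)/\gamma}$.

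There is no analytic obstacle of substance here; the lemma is a bookkeeping device that promotes the preceding one-dimensional calculus to the pointwise definitions of the maximum pressure $\bar{\mathfrak{p}}$, critical pressure $\mathfrak{p}$, and critical speed $\Gamma$. The only sign conditions requiring care are $q(0,x_2;s)>c(0,x_2)=0$ and $q(\bar{\mathfrak{p}}(s),x_2;s)=0<c(\bar{\mathfrak{p}}(s),x_2)$; both are already verified in the text, and they power the intermediate value argument and guarantee that $\mathfrak{p}(s)$ lies strictly inside $(0,\bar{\mathfrak{p}}(s))$, which in turn makes $\Gamma^2(s,x_2)>0$ and $\mathfrak{p}(s)<\bar{\mathfrak{p}}(s)$ meaningful as genuine critical quantities.
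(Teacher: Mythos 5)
Your proposal is correct and follows essentially the same route as the paper: the paper "proves" this lemma precisely by the monotonicity-plus-intermediate-value argument on $p\mapsto q^2(p,x_2;s)$ (decreasing) versus $p\mapsto c^2(p,x_2)$ (increasing) sketched in the paragraphs before the statement, with $\Gamma^2(s,x_2):=c^2(\mathfrak{p}(s),x_2)$ and the closed forms of $\bar{\mathfrak{p}}$ and $\mathfrak{p}$ read off exactly as you describe. Your observation that \eqref{2.12b} requires specializing $s=D(x_2)$ so that $S^{1/\gamma}(x_2)\,s=B(x_2)$ matches the paper's implicit usage.
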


Then direct calculations show that
$$
\frac{d\bar{\mathfrak{p}}}{ds}>0,\qquad\ \frac{d\mathfrak{p}}{ds}>0.
$$
Clearly, $\mathfrak{p}(s)<\bar{\mathfrak{p}}(s)$ for $s>0$. By the
continuity and monotonicity of $\mathfrak{p}(s)$ and
$\bar{\mathfrak{p}}(s)$, there exists a unique
$\underline{\delta}>0$ such that \be\label{2.14}
\mathfrak{p}(\underline{D}+\underline{\delta})=\bar{\mathfrak{p}}(\underline{D}).
\ee where $\underline{D}=\inf\limits_{x_2\in[0,1]}D(x_2)$.

Moreover, it follows from \eqref{2.14} that there exists a uniform constant $C>0$ such that
\bee\label{2.15}
\begin{cases}
C^{-1}\leq\mathfrak{p}(\underline{D})<\bar{\mathfrak{p}}(\underline{D})=\mathfrak{p}(\underline{D}+\underline{\delta})\leq C,\\[2mm]
C^{-1}\leq\mathfrak{p}'(s)\leq C,\ C^{-1}\leq\bar{\mathfrak{p}}'(s)\leq C\qquad\,\, \text{if}\ s\in(\underline{D},\underline{D}+\underline{\delta}),\\[2mm]
C^{-1}\leq S^{\frac{1}{\gamma}}(x_2)\left(\frac{\gamma
p}{\gamma-1}\right)^{\frac{\gamma-1}{\gamma}}\leq
C\qquad\qquad\,\,\,\, \text{if}\
p\in(\mathfrak{p}(\underline{D}),\bar{\mathfrak{p}}(\underline{D}+\underline{\delta})).
\end{cases}
\eee
Hereafter, $C$ denotes a generic constant which depends only
essentially on $S$ and $B$.

If $S(x_{2})$ and  $B(x_{2})$ satisfy \be \label{2.23}
\|(S-\underline{S},B-\underline{B})\|_{C^{1,1}([0,1])}\leq\delta,
\ee then \be
\bar{D}=\sup\limits_{x_{2}\in[0,1]}\frac{B(x_{2})}{S^{1/\gamma}(x_{2})}\leq
\underline{D}+ C\delta. \ee

Finally, we study the behavior of $\mathcal{S}$ and $\mathcal{B}$ at
the upstream and downstream in the far fields of the nozzle where
the flow may have certain simple structure. Indeed, for the flows
satisfying \eqref{1.18}--\eqref{1.24}, one can determine $p_0$,
$p_1$, $\rho_{0}(x_2)$, $\rho_{1}(x_2)$, $u_{0}(x_{2})$, and
$u_{1}(x_{2})$ first.

If the flow satisfies \eqref{1.18}, then
\be\label{2.18}
\frac{u^{2}_{0}(x_{2})}{2}+S(x_2)\rho^{\gamma-1}_0(x_2;p_0)=B(x_{2}),\quad
u_0(x_2)>0,\quad \rho_0(x_2;p_0)=\left(\frac{\gamma
p_0}{(\gamma-1)S(x_2)}\right)^{\frac{1}{\gamma}}
\ee
and
\be\label{2.19}
\int_{0}^{1}\rho_0(x_2;p_0) u_{0}(x_{2})dx_{2}=m,
\ee
which imply that
\be
u_{0}(x_{2})=\sqrt{2\big(B(x_{2})-S(x_2)\rho_{0}^{\gamma-1}(x_2;p_0)\big)},
\ee and \be\label{2.21}
m=\int_{0}^{1}\rho_{0}(x_2;p_0)\sqrt{2\big(B(x_{2})-S(x_2)\rho_{0}^{\gamma-1}(x_2;p_0)\big)}dx_{2}.
\ee

\begin{lemma}\label{lem:2.2}
Let $\delta\leq \frac{\underline{\delta}}{2}$. It follows from
\eqref{2.14} that $\mathfrak{p}(D(x_{2}))\leq
\mathfrak{p}(\bar{D})<\bar{\mathfrak{p}}(\underline{D})$. Then we
have
\begin{itemize}
\item[(i)] For given $S(x_{2})$, $B(x_{2})$, and $m>0$,
\eqref{2.21} has a solution $p_0$ satisfying
$p_{0}\in(\mathfrak{p}(\bar{D}),\bar{\mathfrak{p}}(\underline{D}))$
such that $\rho_0(x_2;p_0)$ and $u_0(x_2)>0$ satisfy \eqref{2.18}
for $x_2\in[0,1]$;

\item[(ii)] If $\underline{S}>0$ and
$\|B-\underline{B}\|_{C^{1,1}([0,1])}=\delta\leq \hat{\delta}_{0}$
for some small $\hat{\delta}_0$, then there is a positive constant $C$ such that
\bee \label{2.21a}
\begin{cases}
C^{-1}\delta^{2\beta}\leq\bar{\mathfrak{p}}(\underline{D})-p_{0}\leq C,\\[2mm]
C^{-1}\delta^{\beta}\leq u_{0}(x_2)\leq C,\\[2mm]
|u_{0}'(x_{2})|\le \frac{|B'(x_{2})|+\frac{1}{\gamma}|S'(x_{2})|\rho^{\gamma-1}_{0}(x_2;p_0)}{u_0(x_2)}\leq
C\delta^{1-\beta}.
\end{cases}
\eee
\end{itemize}
\end{lemma}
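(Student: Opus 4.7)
The plan is to prove part (i) by a monotonicity/intermediate-value argument for the mass-flux functional, and then to derive the quantitative bounds of part (ii) by linearizing around the endpoint $p_0=\bar{\mathfrak{p}}(\underline{D})$.

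For part (i), I would introduce the mass-flux map
$$M(p_0)=\int_0^1 \rho_0(x_2;p_0)\,u_0(x_2;p_0)\,dx_2,$$
with $\rho_0(x_2;p_0)=\bigl(\gamma p_0/((\gamma-1)S(x_2))\bigr)^{1/\gamma}$ and $u_0=\sqrt{2(B(x_2)-S(x_2)\rho_0^{\gamma-1})}$, and regard it as a function of $p_0\in[\mathfrak{p}(\bar D),\bar{\mathfrak{p}}(\underline D)]$. Non-negativity of the radicand is guaranteed by Lemma~\ref{lem:2.1} together with the monotonicity of $\bar{\mathfrak{p}}$ (since $\underline D\le D(x_2)$), and the only possible singularity of the integrand is a square-root zero in $u_0$, which is integrable; hence $M$ is continuous on the closed interval. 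Differentiating under the integral and using $\partial_{p_0}\rho_0=\rho_0/(\gamma p_0)$ together with $\partial_{p_0}u_0=-1/(\rho_0 u_0)$ (obtained directly from $u_0^2/2=B-S\rho_0^{\gamma-1}$) gives
$$M'(p_0)=\int_0^1 \frac{\rho_0\,(u_0^2-c_0^2)}{\gamma p_0\,u_0}\,dx_2,\qquad c_0^2=\gamma p_0/\rho_0,$$
which is strictly negative on the open subsonic interval (where $u_0<c_0$ pointwise). Thus $M$ is strictly decreasing, and the intermediate value theorem yields the unique $p_0$ realizing the given $m$; the explicit formulas then return $\rho_0$ and $u_0$ as in \eqref{2.18}.

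For part (ii), the upper bound $\bar{\mathfrak{p}}(\underline D)-p_0\le C$ is trivial because the interval endpoints differ by at most a constant independent of $\delta$ by \eqref{2.15}. For the lower bound on $u_0$, I would use the decomposition
$$\tfrac{1}{2}u_0^2(x_2)=\bigl(B(x_2)-h(x_2,\bar{\mathfrak{p}}(\underline D))\bigr)+\bigl(h(x_2,\bar{\mathfrak{p}}(\underline D))-h(x_2,p_0)\bigr),$$
where $h(x_2,p):=S^{1/\gamma}(x_2)\bigl(\gamma p/(\gamma-1)\bigr)^{(\gamma-1)/\gamma}$. The first bracket is non-negative by \eqref{2.12a} and $\bar{\mathfrak{p}}(\underline D)\le\bar{\mathfrak{p}}(D(x_2))$, and the second is bounded below by $C^{-1}(\bar{\mathfrak{p}}(\underline D)-p_0)$ thanks to the uniform positivity of $\partial_p h$ granted by \eqref{2.15}. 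This produces $u_0^2\ge C^{-1}(\bar{\mathfrak{p}}(\underline D)-p_0)$, so once the lower bound $\bar{\mathfrak{p}}(\underline D)-p_0\ge C^{-1}\delta^{2\beta}$ is established (by inverting $M$ using the strict monotonicity $M'<0$ and the a priori lower bound on $m$ from the main theorem's regime), we obtain $u_0\ge C^{-1}\delta^{\beta}$; the matching upper bound $u_0\le C$ is immediate since $u_0^2\le 2B\le C$.

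Finally, for the derivative estimate, differentiating $u_0^2/2=B(x_2)-S(x_2)\rho_0^{\gamma-1}(x_2;p_0)$ in $x_2$ (with $p_0$ fixed) and using $\partial_{x_2}\rho_0=-\rho_0 S'/(\gamma S)$ yields the clean identity
$$u_0 u_0'=B'(x_2)-\tfrac{1}{\gamma}S'(x_2)\,\rho_0^{\gamma-1},$$
from which dividing by $u_0$ and combining the previous lower bound on $u_0$ with $\|(S,B)\|_{C^{1,1}}\le C\delta$ delivers $|u_0'|\le C\delta^{1-\beta}$. The hardest step is the quantitative lower bound $\bar{\mathfrak{p}}(\underline D)-p_0\ge C^{-1}\delta^{2\beta}$: this requires tracking how $M(p_0)$ collapses as $p_0\uparrow\bar{\mathfrak{p}}(\underline D)$ in the near-isentropic regime where $D$ is nearly constant, and then using the admissible range of $m$ from the main theorem (together with the strict monotonicity of $M$) to pin down the right power of $\delta$.
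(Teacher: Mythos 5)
Your proposal follows essentially the same route as the paper: part (i) via strict monotonicity of the mass-flux functional in $p_0$ plus the intermediate value theorem, and part (ii) via the splitting of $\tfrac12 u_0^2=B-S\rho_0^{\gamma-1}$ into the oscillation of $D$ plus the gap to the maximal pressure. Your explicit formula $M'(p_0)=\int_0^1\rho_0(u_0^2-c_0^2)/(\gamma p_0 u_0)\,dx_2$ makes the monotonicity transparent where the paper merely asserts the sign, and working with the pressure gap $\bar{\mathfrak{p}}(\underline{D})-p_0$ instead of the density gap $\bar{\varrho}^{\gamma-1}-\rho_0^{\gamma-1}$ is equivalent via \eqref{2.15}. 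One caveat on (i): the intermediate value theorem only applies once you check that $m$ lies in the range of $M$; the paper does this with the endpoint estimates $M(\bar{\mathfrak{p}}(\underline{D}))\le C\delta^{1/2}$ (the radicand there reduces to $2S^{1/\gamma}(D-\underline{D})\le C\delta$) and $M(\mathfrak{p}(\bar{D}))\ge C^{-1}\underline{\delta}^{1/2}$, which is precisely why the admissible regime is $m\in(\delta^{\beta},m_1)$ and not arbitrary $m>0$ as the lemma's wording suggests.

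The step you flag as hardest — $\bar{\mathfrak{p}}(\underline{D})-p_0\ge C^{-1}\delta^{2\beta}$ — is closed by reading your own decomposition in the opposite direction, not by ``inverting $M$ using strict monotonicity,'' which by itself yields no rate. Since $h(x_2,\bar{\mathfrak{p}}(s))=S^{1/\gamma}(x_2)\,s$ by the definition of $\bar{\mathfrak{p}}$, the first bracket equals $S^{1/\gamma}(x_2)\bigl(D(x_2)-\underline{D}\bigr)\le C\delta$ by \eqref{2.23}, and the second bracket is $\le C\bigl(\bar{\mathfrak{p}}(\underline{D})-p_0\bigr)$ by the upper bound on $\partial_p h$ from \eqref{2.15}. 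Hence $\rho_0 u_0\le C\sqrt{\delta+\bigl(\bar{\mathfrak{p}}(\underline{D})-p_0\bigr)}$ pointwise, so $\delta^{\beta}<m\le C\sqrt{\delta+\bigl(\bar{\mathfrak{p}}(\underline{D})-p_0\bigr)}$, i.e. $\delta+\bigl(\bar{\mathfrak{p}}(\underline{D})-p_0\bigr)\ge C^{-1}\delta^{2\beta}$; because $\beta<1/3$ gives $2\beta<1$, the term $\delta$ is absorbed for $\delta\le\hat{\delta}_0$ small, which is exactly the paper's argument. With that step in place, your lower bound $u_0^2\ge C^{-1}\bigl(\bar{\mathfrak{p}}(\underline{D})-p_0\bigr)$ and the identity $u_0u_0'=B'-\tfrac1\gamma S'\rho_0^{\gamma-1}$ complete (ii) as in the paper.
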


The proof of this lemma is as follows.

Result (i) is for obtaining a global subsonic flow in the nozzle.
Clearly, from \eqref{2.18}
$$
\frac{d}{dp_{0}}\Big(\int_{0}^{1}\rho_{0}(x_2;p_0))\sqrt{2\big(B(x_{2})-S(x_2)\rho_{0}^{\gamma-1}(x_2;
p_0)\big)}dx_{2}\Big)<0, \quad \frac{d\rho_0(x_2;p_0)}{dp_0}>0
$$
for $p_{0}\in(\mathfrak{p}(\bar{D}),\bar{\mathfrak{p}}(\underline{D}))$.
Let
$$
\varrho(D;x_2):=\left(\frac{\gamma
\mathfrak{p}(D)}{(\gamma-1)S(x_2)}\right)^{\frac{1}{\gamma}}
\quad\text{and}\quad
\bar{\varrho}(D;x_2):=\left(\frac{\gamma
\bar{\mathfrak{p}}(D)}{(\gamma-1)S(x_2)}\right)^{\frac{1}{\gamma}}.
$$
It follows from \eqref{2.15} and \eqref{2.23} that
\begin{eqnarray*}
&&\int_{0}^{1}\bar{\varrho}(\underline{D};x_2)\sqrt{2S^{1/\gamma}(x_2)\big(D(x_{2})
  -(S^{1/\gamma}\bar{\varrho}(\underline{D};x_2))^{\gamma-1}\big)}dx_{2}\\
&&=\int_{0}^{1}\bar{\varrho}(\underline{D};x_2)\sqrt{2S^{1/\gamma}(x_2)\big(D(x_{2})-\underline{D}\big)}dx_{2}\\
&&\leq C\delta^{1/2}.
\end{eqnarray*}
In addition,
\begin{eqnarray*}
&&\int_{0}^{1}\varrho(\overline{D};x_2)\sqrt{2S^{1/\gamma}(x_2)\big(D(x_{2})-(S^{1/\gamma}(x_2)\varrho(\overline{D};x_2))^{\gamma-1}\big)}dx_{2}\\
&&\geq\int_{0}^{1}\varrho(\overline{D};x_2)\sqrt{2S^{1/\gamma}(x_2)\big(\underline{D}-(S^{1/\gamma}\varrho(\overline{D};x_2))^{\gamma-1}\big)}dx_{2}\\
&&=\int_{0}^{1}\varrho(\overline{D};x_2)
  \sqrt{2S(x_2)\big(\bar{\varrho}(\underline{D};x_2)^{\gamma-1}-\varrho(\overline{D};x_2)^{\gamma-1}\big)}dx_{2}\\
&&=\int_{0}^{1}\varrho(\overline{D};x_2)\sqrt{2S(x_2)
 \big(\varrho(\underline{D}+\underline{\delta};x_2)^{\gamma-1}-\varrho(\overline{D};x_2)^{\gamma-1}\big)}dx_{2}\\
&&\geq \int_{0}^{1}\varrho(\overline{D};x_2)\sqrt{2S(x_2)\big(\varrho(\underline{D}+\underline{\delta};x_2)^{\gamma-1}
-\varrho(\underline{D}+\underline{\delta}/2;x_2)^{\gamma-1}\big)}dx_{2}\\
&&\geq C^{-1}\underline{\delta}^{1/2}.
\end{eqnarray*}
Therefore, for any $\beta\in(0,1/3)$, there exists
$\tilde{\delta}_{0}\in(0,\underline{\delta}/2)$ such that
\eqref{2.21} has a unique solution
$p_{0}\in(\mathfrak{p}(\bar{D}),\bar{\mathfrak{p}}(\underline{D}))$,
if $0\leq\delta\leq\tilde{\delta}_{0}$ and
$m\in(\delta^{\beta},m_{1})$, where $m_{1}$ satisfies
$C^{-1}\underline{\delta}^{1/2}\geq m_{1} \geq
2\tilde{\delta}^{\beta}_{0}>C\delta^{1/2}$. Later on, for
simplicity, we will choose $\beta=1/4$. However, all the results
hold for $\beta\in(0,1/3)$.

By virtue of \eqref{2.21}, one has
\begin{equation*}
\begin{array}{lll}
m&=&\int_{0}^{1}\rho_{0}(x_2;p_0)
\sqrt{2S^{1/\gamma}(x_2)\big(D(x_{2})-(S^{1/\gamma}(x_2)\rho_{0}(x_2;p_0))^{\gamma-1}\big)}dx_{2}\\[2mm]
&=&\int_{0}^{1}\rho_{0}(x_2;p_0)\sqrt{2S^{1/\gamma}(x_2)\big(D(x_{2})-\underline{D}+\underline{D}
-(S^{1/\gamma}(x_2)\rho_{0}(x_2;p_0))^{\gamma-1}\big)}dx_{2}\\[2mm]
&\leq&C\int_{0}^{1}\rho_{0}(x_2;p_0)\sqrt{\delta+(S^{1/\gamma}(x_2)\bar{\varrho}(\underline{D};x_2))^{\gamma-1}
-(S^{1/\gamma}(x_2)\rho_{0}(x_2;p_0))^{\gamma-1}}dx_{2}.
\end{array}
\end{equation*}
Thus, we have
$$
\delta+(S^{1/\gamma}(x_2)\bar{\varrho}(\underline{D};x_2))^{\gamma-1}-(S^{1/\gamma}(x_2)\rho_{0}(x_2;p_0))^{\gamma-1}\geq
C^{-1}\delta^{2\beta}.
$$
Since $\beta<1/3$, then there exists $\hat{\delta}_{0}\in(0,\tilde{\delta}_{0})$
such that, if $0<\delta\leq\hat{\delta}_{0}$, then
$$
\bar{\varrho}(\underline{D};x_2)^{\gamma-1}-\rho_{0}^{\gamma-1}(x_2;p_0)\geq C^{-1}\delta^{2\beta}.
$$
Consequently, if $\underline{S}>0$ and $\|B-\underline{B}\|_{C^{1,1}([0,1])}=\delta\leq\hat{\delta}_{0}$,
there exists a constant $C>0$ such that \eqref{2.21a} holds.
This completes the proof of Lemma \ref{lem:2.2}.

\medskip
Next, to determine the asymptotic states in the downstream, we parameterize the streamlines in the downstream
by their positions in the upstream. Using \eqref{1.18}, \eqref{1.20}, and \eqref{2.10},
we define
\be\label{2.25}
y=y(s)\qquad \text{for}\ s\in[0,1],
\ee
such that
\bee
&S(s)\rho_{0}^{\gamma-1}(s;p_0)+\frac{u^{2}_{0}(s)}{2}
 =S(s)\rho_{1}^{\gamma-1}(s;p_0)+\frac{u^{2}_{1}(y(s))}{2},\quad u_{1}(y(s))>0,\label{2.26}\\[2mm]
&\int_{0}^{s}\rho_{0}(t;p_0)u_{0}(t)dt=\int_{0}^{y(s)}\rho_{1}(t;p_1)u_{1}(t)dt,\quad
\rho_i(s;p_i):=\left(\frac{\gamma
p_i}{(\gamma-1)S(s)}\right)^{\frac{1}{\gamma}},\label{2.27}\\[2mm]
&y(0)=a,\quad y(1)=b.\label{2.28} \eee Then the streamline which
starts at $(-\infty,s)$ ends at $(\infty,y(s))$.

The next procedure is similar as before, where we consider $\rho_i$
instead of $p_i$ for the monotone relationship between them and for
simplicity by recalling
$$
\varrho(D; x_2):=\left(\frac{\gamma
\mathfrak{p}(D)}{(\gamma-1)S(x_2)}\right)^{\frac{1}{\gamma}}
\quad\text{and}\quad\bar{\varrho}(D;x_2):=\left(\frac{\gamma
\bar{\mathfrak{p}}(D)}{(\gamma-1)S(x_2)}\right)^{\frac{1}{\gamma}}.
$$

The mapping in \eqref{2.25} is well-defined due to condition
\eqref{2.26} and \eqref{2.27}. In fact, \eqref{2.27} deduces that
\be\label{2.31b}
\rho_{0}(s;p_0)u_{0}(s)= \rho_{1}(s;p_1)u_{1}(y(s))y'(s).
\ee
If
$\rho_1<\bar{\varrho}(\underline{D};x_2)\leq\bar{\varrho}(D;x_2)$, then
$$
S(s)(\rho_{0}^{\gamma-1}(s;p_0)-\rho_{1}^{\gamma-1}(s;p_1))+\frac{u^2_0(s)}{2}
=B(s)-S(s)\rho_{1}^{\gamma-1}(s)>
B(s)-S^{1/\gamma}(s)D(s)=0.
$$
Then we have
\bee
\begin{cases}
\frac{dy}{ds}=\frac{\rho_{0}(s;p_0)u_{0}(s)}{\rho_{1}(s;p_1)\sqrt{2S(s)(\rho_{0}^{\gamma-1}(s;p_0)-\rho_{1}^{\gamma-1}(s;p_1))
+ u^{2}_{0}(s)}},\\
y(0)=a,
\end{cases}
\eee
where the pressure in the downstream $p_{1}$ satisfies
\be\label{2.31}
\int_{0}^{1}\frac{\rho_{0}(s;p_0)u_{0}(s)}{\rho_{1}(s;p_1)\sqrt{2S(s)(\rho_{0}^{\gamma-1}(s;p_0)-\rho_{1}^{\gamma-1}(s;p_1))
+ u^{2}_{0}(s)}}ds=b-a.
\ee
It remains to show that there exists
$p_{1}\in(\mathfrak{p}(\bar{D}),\bar{\mathfrak{p}}(\underline{D}))$
satisfying \eqref{2.31}. As in the proof to Lemma \ref{lem:2.2}, we
find that, for $p_{1}\in(\mathfrak{p}(\bar{D}),\bar{\mathfrak{p}}(\underline{D}))$,
$$
\frac{d}{dp_{1}}\int_{0}^{1}\frac{\rho_{0}(s;p_0)u_{0}(s)}{\rho_{1}(s;p_1)
\sqrt{2S(s)(\rho_{0}^{\gamma-1}(s;p_0)-\rho_{1}^{\gamma-1}(s;p_1))
+u^{2}_{0}(s)}}ds>0.
$$
On one hand, there exists $\bar{\delta}_{0}\in(0,\tilde{\delta}_{0})$ such that, if $\delta\leq\bar{\delta}_0$, then
\begin{equation*}
\begin{array}{lll}
&&\int_{0}^{1}\frac{\rho_{0}(s;p_0)u_{0}(s)}{\bar{\varrho}(\underline{D};s)
\sqrt{2S(s)(\rho_{0}^{\gamma-1}(s;p_0)-\bar{\varrho}(\underline{D};s)^{\gamma-1})
+u^{2}_{0}(s)}}ds\\[2mm]
&&=\int_{0}^{1}\frac{\rho_{0}(s;p_0)u_{0}(s)}{\bar{\varrho}(\underline{D};s)\sqrt{2S^{1/\gamma}(s)(D(s)-\underline{D})}}ds\\[2mm]
&&\geq C\delta^{(2\beta-1)/2}>b-a.
\end{array}
\end{equation*}
On the other hand,
\begin{equation}
\begin{array}{lll}
&&\int_{0}^{1}\frac{\rho_{0}(s;p_0)u_{0}(s)}{\varrho(\overline{D};s)
\sqrt{2S(s)\big(\rho_{0}^{\gamma-1}(s;p_0)-\varrho(\overline{D};s)^{\gamma-1}\big)
+u^{2}_{0}(s)}}ds\\[2mm]
&&=\int_{0}^{1}\frac{\rho_{0}(s;p_0)u_{0}(s)}{\varrho(\overline{D};s)\sqrt{2S^{1/\gamma}(s)(D(s)-(S^{1/\gamma}\rho(\overline{D}))^{\gamma-1})}}ds\\[2mm]
&&\leq\left(\frac{\gamma-1}{\gamma\mathfrak{p}(\overline{D})}\right)^{\frac{1}{\gamma}}
  \int_{0}^{1}\frac{S^{\frac{1}{2\gamma}}(s)\rho_{0}(s;p_0)u_{0}(s)}{\sqrt{2(\underline{D}-(S^{1/\gamma}\rho(\overline{D}))^{\gamma-1})}}ds\\[2mm]
&&=\left(\frac{\gamma-1}{\gamma\mathfrak{p}(\overline{D})}\right)^{\frac{1}{\gamma}}
  \int_{0}^{1}\frac{S^{\frac{1}{2\gamma}}(s)\rho_{0}(s;p_0)u_{0}(s)}
  {\sqrt{2\big((\frac{\gamma\bar{\mathfrak{p}}(\underline{D})}{\gamma-1})^{\frac{\gamma-1}{\gamma}}
  -(\frac{\gamma\mathfrak{p}(D)}{\gamma-1})^{\frac{\gamma-1}{\gamma}}\big)}}ds\\[2mm]
&&\leq\left(\frac{\gamma-1}{\gamma\mathfrak{p}(\overline{D})}\right)^{\frac{1}{\gamma}}
  \frac{m}{\sqrt{2\big((\frac{\gamma\mathfrak{p}(\underline{D}+\underline{\delta})}{\gamma-1})^{\frac{\gamma-1}{\gamma}}
  -(\frac{\gamma\mathfrak{p}(D)}{\gamma-1})^{\frac{\gamma-1}{\gamma}}\big)}}
  \max_{x_2\in[0,1]}\big(S^{\frac{1}{2\gamma}}(x_2)\big)
  \\[2mm]
&&=\left(\frac{\gamma-1}{\gamma\mathfrak{p}(\overline{D})}\right)^{\frac{1}{\gamma}}
  \frac{m}{\sqrt{\frac{4}{\gamma+1}(\underline{D}+\underline{\delta}-D)}}\max_{x_2\in[0,1]}\big(S^{\frac{1}{2\gamma}}(x_2)\big)\\[2mm]
%&&=\frac{m}{\varrho(\overline{D})
%  \sqrt{2S(x_2)\big(\varrho(\underline{D}+\underline{\delta})^{\gamma-1}-\rho(\overline{D})^{\gamma-1}\big)}}\\[2mm]
%&&\leq \frac{m}{\varrho(\overline{D})\sqrt{2S(x_2)\big(\varrho(\underline{D}+\underline{\delta})^{\gamma-1}
%  -\rho(\underline{D}+\underline{\delta}/2)^{\gamma-1}\big)}}\\[2mm]
&&\leq \frac{C m}{\underline{\delta}^{1/2}}<b-a.
 \end{array}
\end{equation}
Thus, there exists a unique
$p_{1}\in(\mathfrak{p}(\bar{D}),\bar{\mathfrak{p}}(\underline{D}))$
such that \eqref{2.31} holds, provided that
$0\leq\delta\leq\bar{\delta}_{0}$ and $m\in(\delta^{\beta},m_{2})$
for some $\bar{\delta}_0$ small enough and
$2\bar{\delta}_0^{\beta}\leq
m_{2}\leq\min\{m_{1},C^{-1}(b-a)\underline{\delta}^{1/2}\}$. Once
$p_{1}$ determined, $y(s)$, $\rho_1(s;p_1)$, and $u_{1}(s)$ can be obtained
from \eqref{2.26}, \eqref{2.27}, and \eqref{2.31b}. Therefore, the
above calculations yield the following proposition.

\begin{proposition}\label{2}
Let $\underline{S},\ \underline{B}>0$, and $D$ be the ratio of
$S^{1/\gamma}$ and $B$. There exists $\bar{\delta}_{0}>0$ such that,
for any $S, B\in C^{1,1}([0,1])$ satisfying \eqref{2.23} with
$\delta\leq \bar{\delta}_{0}$ respectively, there exists
$\bar{m}\geq2\bar{\delta}^{\beta}_{0}$, $\beta\in(0,\frac{1}{3})$,
such that

\begin{itemize}
\item[(i)] Existence: There exists solutions $(\rho_{0},u_{0}, p_0)$ to \eqref{2.18}--\eqref{2.19} and $(\rho_{1},u_{1}, p_1)$
to \eqref{2.26}--\eqref{2.28} if $m\in(\delta^{\beta},\bar{m})$ with $\rho_j^\gamma(y_j;p_j)=\frac{\gamma}{\gamma-1}\frac{p_j}{S(y_j)}, j=0,1$,
$y_0=x_2$ and $y_1=y(s)$;

\item[(ii)] Subsonicity: $p_{0},\ p_{1}\in(\mathfrak{p}(\bar{D}),\bar{\mathfrak{p}}(\underline{D}))$;

\item[(iii)] Limiting behaviors: Either $p_{0}\rightarrow \mathfrak{p}(\bar{D})$ or $p_{1}\rightarrow\mathfrak{p}(\bar{D})$ as $m\rightarrow\bar{m}$,
\end{itemize}
where $\bar{D}=\sup\limits_{x_{2}\in[0,1]}D(x_{2})$ and
\begin{equation}\label{2.31a}
\bar{m}=\sup\{s\, :\, m\in(\delta^{\beta},s)\ \text{such that there
exist}\ p_0,\
p_1\in(\mathfrak{p}(\bar{D}),\bar{\mathfrak{p}}(\underline{D}))\}.
\end{equation}
\end{proposition}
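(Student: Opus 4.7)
The plan is to package Lemma~\ref{lem:2.2} with the downstream analysis carried out just before the proposition, and then extract $\bar{m}$ as the supremum in~\eqref{2.31a}. First I would fix $\bar{\delta}_0$ as the threshold already produced in the inline downstream estimates (which was chosen inside $(0,\tilde{\delta}_0)$ with $\tilde{\delta}_0$ from Lemma~\ref{lem:2.2}), and note that the inequality $m_2\geq 2\bar{\delta}_0^{\beta}$ established there automatically yields $\bar{m}\geq 2\bar{\delta}_0^{\beta}$, since the interval $(\delta^{\beta},m_2)$ is by construction contained in the admissible set defining $\bar{m}$.

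For (i) and (ii), the upstream triple $(\rho_0,u_0,p_0)$ with $p_0\in(\mathfrak{p}(\bar{D}),\bar{\mathfrak{p}}(\underline{D}))$ is given directly by Lemma~\ref{lem:2.2}. For the downstream triple I would apply the intermediate value theorem to the map
\bes
p_1\longmapsto I(p_1):=\int_0^1\frac{\rho_0(s;p_0)u_0(s)}{\rho_1(s;p_1)\sqrt{2S(s)\bigl(\rho_0^{\gamma-1}(s;p_0)-\rho_1^{\gamma-1}(s;p_1)\bigr)+u_0^2(s)}}\,ds,
\ees
using its strict monotonicity $dI/dp_1>0$ together with the endpoint estimates $I(\bar{\mathfrak{p}}(\underline{D}))\geq C\delta^{(2\beta-1)/2}>b-a$ and $I(\mathfrak{p}(\bar{D}))\leq Cm/\underline{\delta}^{1/2}<b-a$ displayed above, to extract a unique $p_1\in(\mathfrak{p}(\bar{D}),\bar{\mathfrak{p}}(\underline{D}))$ solving $I(p_1)=b-a$. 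Then $y(s)$, $\rho_1(s;p_1)$ and $u_1(s)$ are recovered from~\eqref{2.26}, \eqref{2.27} and~\eqref{2.31b}. Statement (ii) is immediate because both pressures lie in the subsonic interval, which is nonempty thanks to~\eqref{2.14} and $\bar{D}\leq\underline{D}+C\delta$.

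For (iii), the implicit functions $m\mapsto p_0(m)$ and $m\mapsto p_1(m)$ are continuous on $(\delta^{\beta},\bar{m})$ by the implicit function theorem applied to the strictly monotone defining integrals. Lemma~\ref{lem:2.2}(ii) already gives the uniform gap $\bar{\mathfrak{p}}(\underline{D})-p_0(m)\geq C^{-1}\delta^{2\beta}$ valid for every $m\geq\delta^{\beta}$, and rerunning the same argument at the downstream provides the analogous lower gap for $p_1$. Hence neither pressure can reach the upper endpoint as $m\to\bar{m}$, and by the supremum characterization~\eqref{2.31a} at least one of $p_0,p_1$ must therefore approach the lower sonic endpoint $\mathfrak{p}(\bar{D})$, which is the alternative in~(iii).

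The main obstacle throughout is the sharp two-sided quantitative control of the two integrals $\int\rho_0 u_0\,dx_2$ and $I(p_1)$ near the endpoints of the admissible pressure interval, which is what simultaneously forces $\bar{m}\geq 2\bar{\delta}_0^{\beta}$ and bounds it above by a quantity of order $(b-a)\underline{\delta}^{1/2}$. These estimates rest on the interplay between the smallness parameter (via $\bar{D}-\underline{D}\leq C\delta$) and the a~priori lower bound $m\geq\delta^{\beta}$; tracking the $x_2$-dependence of the entropy $S$ throughout is exactly the non-isentropic feature distinguishing this argument from the isentropic case treated in~\cite{xx1}.
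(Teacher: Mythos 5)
Your proposal is correct and follows essentially the same route as the paper: (i)--(ii) are read off from Lemma \ref{lem:2.2} together with the monotonicity and endpoint estimates for the downstream integral \eqref{2.31}, and (iii) is obtained from the uniform gap of $p_0,p_1$ below $\bar{\mathfrak{p}}(\underline{D})$ combined with the observation that if neither pressure approached $\mathfrak{p}(\bar{D})$ one could extend past $\bar{m}$, contradicting the supremum in \eqref{2.31a}. The only cosmetic difference is that the paper additionally pins down an explicit $\tilde{m}$ (the flux at which $p_0$ reaches the critical pressure) to locate $\bar{m}\in[m_2,\tilde{m}]$, which your argument does not need.
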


\begin{proof}  Results (i)--(ii) are direct corollaries of Lemmas \ref{lem:2.1}--\ref{lem:2.2}.
It suffices to verify (iii).

For $m\in(\delta^{\beta},m_2)$,
$p_0,p_1\in(\mathfrak{p}(\bar{D}),\bar{\mathfrak{p}}(\underline{D}))$.
For fixed $S(x_2)$ and $B(x_2)$, $p_0$ decreases as $m$ increases.
If
$$
m\rightarrow\tilde{m}
=\int_0^1\varrho(\overline{D};x_2)\sqrt{2S^{1/\gamma}(x_2)\big(D(x_2)-(S^{1/\gamma}\varrho(\overline{D}))^{\gamma-1}\big)}
dx_2,
$$
then
$$
p_0\rightarrow\mathfrak{p}(\overline{D}).
$$
For $\bar{m}$ defined in \eqref{2.31a}, $\bar{m}\in[m_2,\tilde{m}]$.
Note that both $p_0$ and $p_1$ are uniformly away from
$\bar{\mathfrak{p}}(\underline{D})$. If neither $p_0$ nor $p_1$
approaches to $\mathfrak{p}(\overline{D})$ as $m\rightarrow
\bar{m}$, then there always exist
$p_0,p_1\in(\mathfrak{p}(\bar{D}),\bar{\mathfrak{p}}(\underline{D}))$
for $m\in (\delta^{\beta},\bar{m}+\epsilon)$ for some small positive
$\epsilon$, which contradicts with the definition of $\bar{m}$. This
completes the proof.
\end{proof}

\subsection{Reformulation of Problem 1: Problem 2}

Let $X_2$ be the coordinate in the upstream. Since
$\rho_{0}(X_2;p_0)u_{0}(X_2)>0$ for $X_2\in[0,1]$, $\psi$ is an
increasing function of $X_{2}$. Thus, we can represent $X_{2}$ as a
function of $\psi$, which is defined by
$$
X_{2}=\kappa(\psi), \qquad 0\leq\psi\leq m,
$$
so that
$$
\psi(X_2)=\int_{0}^{X_{2}}\rho_{0}(s;p_0)u_{0}(s)ds.
$$

It follows from  Proposition 2.1 that, if \eqref{2.10} holds in $\Omega$,
through each point $(x_{1},x_{2})\in\Omega$, there exists a unique
streamline which starts from the upstream.
Along each streamline, the stream function is a constant by the definition.
Therefore, through any $(x_{1},x_{2})$ in the nozzle, there exists a unique streamline
from $(-\infty,\kappa(\psi))$ with $\psi=\psi(x_{1},x_{2})$. Thus, we denote
$$
\mathcal{S}=S(\kappa(\psi)), \quad \mathcal{B}=B(\kappa(\psi))\,\, \qquad \mbox{for $0\leq \psi\leq m$}.
$$

Then our main task in the rest of the paper is to solve the following
problem:

\bigskip
{\bf Problem 2} (Reformulation of {\bf Problem 1}). {\it Seek a solution of the boundary value problem:
\bee\label{2.33}
\begin{cases}
 \nabla\cdot(\frac{\nabla\psi}{\rho(|\nabla\psi|^{2},\psi)})
=\mathcal{B}'(\psi)\rho(|\nabla\psi|^{2},\psi)
-\frac{1}{\gamma}\mathcal{S}'(\psi)\rho^{\gamma}(|\nabla\psi|^{2},\psi)\ \qquad \text{in}\ \Omega,\\
\psi=\frac{x_{2}-f_{1}(x_{1})}{f_{2}(x_{1})-f_{1}(x_{1})}m\qquad \ \text{on}\ \partial\Omega,
\end{cases}
\eee
such that
\begin{itemize}
\item[(i)] The flow field induced by
$$
\rho=\rho(|\nabla\psi|^{2},\psi),\quad u=\frac{\psi_{x_{2}}}{\rho},\quad  v=-\frac{\psi_{x_{1}}}{\rho},
\quad p=\frac{\gamma-1}{\gamma}\mathcal{S}(\psi)\rho^{\gamma}
$$
satisfies \eqref{1.18}--\eqref{1.24};

\item[(ii)] In the upstream, we have
\be \psi(X_2)=\int_{0}^{X_{2}}\rho_{0}(s;p_0)u_{0}(s)ds, \qquad
 0\leq\psi\leq m.
\ee
\end{itemize}
}

%\medskip
\section{Existence of Solutions of a Modified Boundary Value Problem}\label{sec:3}

There are three main difficulties to solve the boundary value problem \eqref{2.33}.
The first is that equation \eqref{2.33} may degenerate
at the sonic states. The second is that, although the entropy and Bernoulli function are well-defined on $[0,m]$,
the density $\rho$ is not well-defined for arbitrary $\psi$ and $|\nabla\psi|$.
The last is that the problem is in an unbounded domain.
Our basic strategy is to extend the definition of $\mathcal{S}(\psi)$ and $\mathcal{B}(\psi)$ appropriately,
introduce the elliptic cut--off to truncate $|\nabla\psi|$ in $\rho(|\nabla\psi|^{2},\psi)$ in a suitable way,
and use a sequence of bounded domains and solve the problems on it to approximate the original one.

In this section we first introduce a modified problem and then solve it,
which can be indeed used to solve the original problem with the asymptotic behavior in \S 4.

Set
\bee
a(s)=
\begin{cases}
\mathcal{S}'(s) \quad&\text{if}\ 0\leq s\leq m,\\[1.5mm]
\mathcal{S}'(m)\frac{2m-s}{m} \quad&\text{if}\ m\leq s\leq 2m,\\[1.5mm]
\mathcal{S}'(0)\frac{s+m}{m} \quad&\text{if}\ -m\leq s\leq 0,\\[1.5mm]
0,\qquad&\text{if}\ s\geq2m \ \text{or}\ s\leq-m,
\end{cases}  \nonumber
\eee
and
\bee
b(s)=
\begin{cases}
(\frac{\mathcal{B}}{\mathcal{S}^{\gamma}})'(s) \quad&\text{if}\ 0\leq s\leq m,\\[1.5mm]
(\frac{\mathcal{B}}{\mathcal{S}^{\gamma}})'(m)\frac{2m-s}{m} \quad&\text{if}\ m\leq s\leq 2m,\\[1.5mm]
(\frac{\mathcal{B}}{\mathcal{S}^{\gamma}})'(0)\frac{s+m}{m} \quad&\text{if}\ -m\leq s\leq 0,\\[1.5mm]
0,\qquad&\text{if}\ \psi\geq2m,\ \text{or}\ s\leq-m.
\end{cases}\nonumber
\eee
We define
\begin{equation}\label{3.0}
\tilde{\mathcal{S}}(s)=\mathcal{S}(0)+\int_{0}^{s}a(t)dt,\quad
\tilde{\mathcal{B}}(s)=\tilde{\mathcal{S}}^{\gamma}(s)
\Big(\frac{\mathcal{B}(0)}{\mathcal{S}^{\gamma}(0)}+\int_{0}^{s}b(t)dt\Big).
\end{equation}
Then $(\tilde{\mathcal{S}}, \tilde{\mathcal{B}})\in C^{1,1}(\mathbb{R})$.
We remark here that the definition of $b(s)$ in this particular form instead of $B'$ itself
is for some technical reason,
roughly speaking, due to the maximum principle.
Moreover, since $m>\delta^{\beta}$, there exists a suitably small $\bar{\delta}_1$ such that, when $\delta<\bar{\delta}_1$,
$$
0<\underline{B}-C\delta\leq\frac{1}{2}\tilde{u}^2_0(s)+ \tilde{\mathcal{S}}(s)\rho^{\gamma-1}_0
\leq \sup\limits_{x_2\in[0,1]}B(x_2)+C\delta, \qquad \tilde{u}_0(s)>0
$$
for some $C>0$,
where $\|(\tilde{S}-\tilde{S}(0), \tilde{B}-\tilde{B}(0))\|_{C^{1,1}(\mathbb{R})}\leq \delta^{1-\beta}$.

In the rest of the paper, we will always use the following notations:
$$
\rho_{1}(|\nabla\psi|^{2},\psi)=\frac{\partial\rho(|\nabla\psi|^{2},\psi)}{\partial|\nabla\psi|^{2}},\quad \rho_{2}(|\nabla\psi|^{2},\psi)=\frac{\partial \rho(|\nabla\psi|^{2},\psi)}{\partial\psi}.
$$
It is easy to see that
$$\rho_{1}(|\nabla\psi|^{2},\psi)=-\frac{1}{2\rho(c^{2}-|\nabla\psi|^{2}/\rho^{2})}$$
goes to $-\infty$ when the flow approaches the sonic state from
the subsonic states. To avoid it, we introduce the following cut--off function.
For $\epsilon>0$, let \bee\label{cut-off} \zeta_{0}(s)=
\begin{cases}
s \qquad&\text{if}\ s<-2\epsilon,\\
-\frac{3}{2}\epsilon \qquad&\text{if}\ s\geq-\epsilon
\end{cases}
\eee be a smooth increasing function  such that $|\zeta'_{0}|\leq1$.
We define \be\label{cut-off using}
\tilde{\Delta}^2(|\nabla\psi|^{2},\psi)
:=\zeta_{0}(|\nabla\psi|^{2}-(\gamma-1)\tilde{\mathcal{S}}(\psi)\tilde{\rho}^{\gamma+1})
+(\gamma-1)\tilde{\mathcal{S}}(\psi)\tilde{\rho}^{\gamma+1}, \ee
where \be \frac{1}{2}\tilde{\Delta}^2(|\nabla\psi|^2, \psi)+
\tilde{\mathcal{S}}(\psi)\tilde{\rho}^{\gamma+1}=\tilde{\mathcal{B}}(\psi)\tilde{\rho}^2.
\ee
A direct calculation shows
\be
\tilde{S}_{ij}(q,z)=\tilde{\rho}(|q|^{2},z)\delta_{ij}-2\tilde{\rho}_{1}(|q|^{2},z)\xi_{i}\xi_{j},
\ee
and
$$
\tilde{\rho}_{1}(|\nabla\psi|^{2},\psi)=\frac{\zeta'_{0}\tilde{\rho}}
{4\tilde{\mathcal{B}}\tilde{\rho}^{2}-(\gamma+1)^2\mathcal{S}\tilde{\rho}^{\gamma+1}+(\gamma^2-1)\zeta'_{0}
\tilde{S}\tilde{\rho}^{\gamma+1}}<0.
$$
Obviously, there exist two positive constants $\lambda(\epsilon)$ and $\Lambda(\epsilon)$ such that
\be
\lambda|\xi|^{2}\leq \tilde{S}_{ij}(q,z)\xi_{i}\xi_{j}\leq\Lambda |\xi|^{2}
\ee
for any $z\in\mathbb{R}$, $q\in\mathbb{R}^{2}$, and $\xi\in\mathbb{R}^{2}$, which means that
the modified equation is uniformly elliptic. Thus, instead of solving {\it Problem 2},
we first solve the following problem:

\bigskip
{\bf Problem 3} (Modified Problem). {\it Seek a solution to the boundary value problem:
\bee\label{3.10}
\begin{cases}
 \nabla\cdot(\frac{\nabla\psi}{\tilde{\rho}})
=\tilde{B}'\tilde{\rho}
-\frac{1}{\gamma}\tilde{\mathcal{S}}'\tilde{\rho}^{\gamma}\qquad \text{in}\ \Omega,\\[2mm]
\psi=\frac{x_{2}-f_{1}(x_{1})}{f_{2}(x_{1})-f_{1}(x_{1})}m\qquad\qquad\,\,\,\, \text{on}\ \partial\Omega
\end{cases}
\eee
such that $\|\psi\|_{C^{1,1}}$ has a uniform upper bound.}

\medskip
\begin{proposition}\label{1}
Let the boundary $\partial\Omega$ satisfy \eqref{1.3}--\eqref{1.6}.
Then there exists
$0<\delta_1\leq\min\{\bar{\delta}_0,\bar{\delta}_1\}$ such that, if
$\|(S-\underline{S}, B-\underline{B})\|_{C^{1,1}([0,1])}\leq\delta$
with $0<\delta\leq \delta_1$ and $m\in(\delta^{\beta},m_1)$ with
$m_1=2\delta^{\beta/2}\leq\bar{m}$, where $\bar{m}$ is defined in
Proposition {\rm 2.2}, then {\rm Problem 3} has a solution $\psi\in
C^{2,\alpha}(\overline{\Omega})$ satisfying \be\label{3.11}
|\psi|\leq C(\epsilon,\delta),\qquad
|\nabla\psi|^{2}\leq(\gamma-1)\mathcal{S}\rho^{\gamma+1}-2\epsilon
\ee
for some $\epsilon>0$, so
$|\nabla\psi|^{2}\leq\underline{\Sigma}(\epsilon) -2\epsilon$ with
$\underline{\Sigma}(\epsilon) :=(\gamma+1)(\underline{S}+\delta+\epsilon)
(\frac{2(\underline{B}+\delta-\epsilon)}{(\gamma+1)(\underline{S}
+\epsilon)})^{\frac{\gamma+1}{\gamma-1}}$.
\end{proposition}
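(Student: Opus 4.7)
The plan is to solve Problem 3 by a truncation–and–limit procedure. First, for each integer $N>0$, I would work on the bounded domain $\Omega_N:=\Omega\cap\{-N<x_{1}<N\}$ and pose the Dirichlet problem for \eqref{3.10} on $\Omega_N$, prescribing on the vertical caps $\{x_1=\pm N\}$ the upstream/downstream profiles determined by $p_0,p_1$ via Proposition~2.2 (so that the trace agrees with the lateral boundary data at the corners). Because of the elliptic cut-off \eqref{cut-off}, the coefficients $\tilde S_{ij}$ satisfy $\lambda(\epsilon)|\xi|^{2}\le \tilde S_{ij}\xi_{i}\xi_{j}\le\Lambda(\epsilon)|\xi|^{2}$ globally, so the modified equation is uniformly elliptic, with smooth coefficients (thanks to $\tilde{\mathcal S},\tilde{\mathcal B}\in C^{1,1}(\mathbb R)$). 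I would invoke the Leray–Schauder fixed point theorem applied to the family $T_{\sigma}$ obtained by freezing the nonlinearity, using the standard Schauder theory together with the uniform exterior sphere condition on $\partial\Omega_N$ to produce a solution $\psi^{N}\in C^{2,\alpha}(\overline{\Omega_{N}})$.

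The next step is to derive estimates independent of $N$. A uniform $L^{\infty}$ bound on $\psi^{N}$ (in fact $0\le\psi^{N}\le m$) follows from the maximum principle applied to the modified equation, using the tailored form \eqref{3.0} of the extensions so that the right-hand side of \eqref{3.10} vanishes on the constants $\psi\equiv 0,m$. From uniform ellipticity, the De~Giorgi–Nash–Moser theory yields a uniform interior $C^{\alpha}$ bound, and then boundary Schauder estimates (available since $\partial\Omega\in C^{2,\alpha}$) upgrade this to a uniform $C^{2,\alpha}(\overline{\Omega_N})$ bound on any fixed compact subdomain. An Arzelà–Ascoli/diagonal extraction produces a limit $\psi\in C^{2,\alpha}_{\mathrm{loc}}(\overline{\Omega})$ solving the modified boundary value problem.

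The main obstacle, and the essential content of the statement, is to show that the cut-off is actually inactive for the constructed solution, that is, that $|\nabla\psi|^{2}\le(\gamma-1)\mathcal S\rho^{\gamma+1}-2\epsilon$. I would attack this via a Bernstein-type argument: set $Q=|\nabla\psi|^{2}-(\gamma-1)\tilde{\mathcal S}\tilde\rho^{\gamma+1}$ and, differentiating the modified equation, derive a linear elliptic inequality $\tilde S_{ij}Q_{x_ix_j}+b_k Q_{x_k}\ge -C(\delta,m)$, whose constants degenerate only through lower-order terms involving $\tilde{\mathcal S}',\tilde{\mathcal B}'$, hence by \eqref{2.23} and the extension rule \eqref{3.0} are of order $\delta^{1-\beta}$. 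The maximum principle then reduces the matter to a boundary estimate; on $\partial\Omega$ the tangential derivative is explicit from the boundary data $\psi=\frac{x_2-f_1}{f_2-f_1}m$ and is of order $m\le 2\delta^{\beta/2}$, while the normal derivative is controlled via the equation. Since the subsonic gap $(\gamma-1)\tilde{\mathcal S}\tilde\rho^{\gamma+1}$ is uniformly bounded below by a positive constant (this is where Lemma~2.1 and the choice $\delta_{1}\le\bar\delta_{0}$ enter), for $m\in(\delta^{\beta},2\delta^{\beta/2})$ the boundary trace of $Q$ is dominated, say, by $-3\epsilon$ for some small $\epsilon=\epsilon(\delta)>0$, yielding the desired interior subsonic bound.

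Once this gradient estimate is in hand, the cut-off $\zeta_{0}$ acts as the identity on the argument in \eqref{cut-off using}, so $\tilde\rho$ coincides with the density defined from \eqref{2.4a} via $\tilde{\mathcal S},\tilde{\mathcal B}$; moreover the $C^{2,\alpha}$ regularity of $\psi$ is inherited from the Schauder theory applied to the now-smooth, uniformly elliptic equation. The hard step is clearly the gradient estimate in the third paragraph: verifying the differential inequality on $Q$ and matching the smallness of $m$ and $\delta$ against the subsonic gap is delicate, and the specific form of the extension \eqref{3.0} (in terms of $(\mathcal B/\mathcal S^{\gamma})'$ rather than $\mathcal B'$ itself) is chosen exactly to make this maximum-principle argument go through.
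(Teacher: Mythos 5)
Your overall skeleton for the existence part (approximation on bounded subdomains, uniform ellipticity from the cut-off, Schauder theory, Arzel\`a--Ascoli and a diagonal extraction) matches the paper. But the heart of the proposition is the bound $|\nabla\psi|^{2}\leq(\gamma-1)\mathcal{S}\rho^{\gamma+1}-2\epsilon$, and there your route diverges from the paper's and is not actually carried out. The paper does \emph{not} run a Bernstein/maximum-principle argument on $Q=|\nabla\psi|^{2}-(\gamma-1)\tilde{\mathcal S}\tilde\rho^{\gamma+1}$. Instead it extracts a quantitative global $C^{1}$ bound: the gradient H\"older estimate $[\psi]_{1,\mu}\leq C(1+\|\nabla\psi\|_{0}+\|\tilde{\mathcal F}\|_{0}/\lambda)$ combined with the interpolation inequality and the $L^\infty$ bound yields $\|\psi\|_{1;\Omega}\leq \eta C(1+C\delta)+C_{\eta}(m+C\delta)$, which is small because $m\leq 2\delta^{\beta/2}$ and $|\tilde{\mathcal F}|\leq C\delta$. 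If the subsonic bound failed, then $(\gamma-1)\mathcal S\rho^{\gamma+1}<|\nabla\psi|^2+2\epsilon$ would be small, forcing $\rho<\bigl(\tfrac{2\mathcal B}{(\gamma+1)\mathcal S}\bigr)^{1/(\gamma-1)}$ and hence $2\mathcal B\rho^{2}-(\gamma+1)\mathcal S\rho^{\gamma+1}\geq 0$, contradicting the identity $2\tilde{\mathcal B}\tilde\rho^{2}-(\gamma+1)\tilde{\mathcal S}\tilde\rho^{\gamma+1}=\zeta_{0}(\cdot)<0$ built into the cut-off. In other words, the mechanism is ``the gradient is globally small while the sonic threshold is uniformly positive,'' not a maximum principle for $Q$.

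Your Bernstein proposal has concrete gaps. Differentiating the equation to get an elliptic inequality for $Q$ needs more regularity than the $C^{1,1}$ extensions $\tilde{\mathcal S},\tilde{\mathcal B}$ provide pointwise, and you have not verified that the resulting source is $O(\delta^{1-\beta})$ rather than involving uncontrolled quadratic second-derivative terms. The boundary estimate is circular: controlling the normal derivative of $\psi$ on $\partial\Omega$ ``via the equation'' is precisely the global gradient bound you are trying to prove. And in the unbounded domain the maximum of $Q$ need not be attained before the far-field analysis of Section 4 is available. Two smaller inaccuracies: the bound $0\leq\psi\leq m$ is \emph{not} obtained at this stage (the right-hand side of \eqref{3.10} does not vanish at $\psi=0,m$; the paper only gets $|\psi|\leq m+C\delta^{1-2\beta}$ here, and proves $0\leq\psi\leq m$ in Proposition 4.2 using the far-field behavior and hypothesis \eqref{1.16a}); and the special form of the extension via $(\mathcal B/\mathcal S^{\gamma})'$ is designed for that later maximum-principle step, not for the gradient estimate of Proposition 3.1.
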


\begin{proof} The proof of the existence part is standard via approximation
by the corresponding problems on bounded domains,
while inequality \eqref{3.11} is crucial here,
since $\underline{\Sigma}(\epsilon)$ depends not only on $B$ but
also on $S$ for the non-isentropic flows. We divide
the proof into four steps.

\medskip
1. First, we use a sequence of boundary value problems on bounded domains
to approximate {\it Problem 3}
on the unbounded domain.
Since the key point is to obtain estimate \eqref{3.11},
we focus on the following boundary value problem:
\bee\label{3.12}
\begin{cases}
\nabla\cdot(\frac{\nabla\psi}{\tilde{\rho}}) =\tilde{B}'\tilde{\rho}
-\frac{1}{\gamma}\tilde{\mathcal{S}}'\tilde{\rho}^{\gamma}
 \qquad \text{in}\ \Omega_{L},\\[2mm]
\psi=\frac{x_{2}-f_{1}(x_{1})}{f_{2}(x_{1})-f_{1}(x_{1})}m\qquad\qquad\,\,\,\, \text{on}\ \partial\Omega_{L},
\end{cases}
\eee
where $\Omega_{L}$ satisfies
$$
\{(x_{1},x_{2}) \,:\, (x_{1},x_{2})\in\Omega, |x_{1}|<L\}\subset\Omega_{L}
\subset\{(x_{1},x_{2})\,:\,(x_{1},x_{2})\in\Omega, |x_{1}|<4L\}
$$
for all positive constants $L>L_0>0$, with $L_{0}$ sufficiently large,
and $\partial\Omega_{L}\in C^{2,\alpha_{1}}, 0<\alpha_{1}<\alpha$, satisfies the uniform exterior sphere
condition with uniform radius $r_{0}$, $0<r_{0}<r$.

\medskip
2. Equation \eqref{3.10} can be written as \be \label{3.13}
\tilde{A}_{ij}\partial_{ij}\psi-\tilde{\rho}_{2}|\nabla\psi|^{2}=(\tilde{B}'-
\frac{1}{\gamma}\tilde{\mathcal{S}}'\tilde{\rho}^{\gamma-1})\tilde{\rho}^3,
\ee where the repeated index is the summation with respect to the
index from now on and \be
\tilde{\rho}_{2}=\frac{-2\tilde{\mathcal{B}}'\tilde{\rho}-(\gamma-1)\zeta'_0\tilde{S}'\tilde{\rho}^{\gamma}
+(\gamma+1)\tilde{\mathcal{S}}'\tilde{\rho}^{\gamma}}
{4\tilde{\mathcal{B}}-(\gamma+1)^2\mathcal{S}\rho^{\gamma-1}+(\gamma^2-1)\zeta'_{0}\tilde{S}\rho^{\gamma-1}}.
\ee Therefore, \eqref{3.13} becomes \be \label{3.15}
\tilde{S}_{ij}(\nabla\psi,\psi)\partial_{ij}\psi=\mathcal{F}(\nabla\psi,\psi),
\ee where
$$
\mathcal{F}(\nabla\psi,\psi)
=(\tilde{B}'-\frac{1}{\gamma}\tilde{\mathcal{S}}'\tilde{\rho}^{\gamma-1})\tilde{\rho}^3+\tilde{\rho}_{2}|\nabla\psi|^2.
$$
Instead of \eqref{3.12}, we first solve the following problem:
\bee\label{3.16}
\begin{cases}
 \tilde{A}_{ij}(\nabla\psi,\psi)\partial_{ij}\psi=\mathcal{\tilde{F}}(\nabla\psi,\psi)\qquad
    \text{in}\ \Omega_{L},\\[2mm]
\psi=\frac{x_{2}-f_{1}(x_{1})}{f_{2}(x_{1})-f_{1}(x_{1})}m\qquad\qquad\qquad\, \text{on}\ \partial\Omega_{L},
\end{cases}
\eee
where
$\mathcal{\tilde{F}}(\nabla\psi,\psi)
=(\tilde{B}'-\frac{1}{\gamma}\tilde{\mathcal{S}}'\tilde{\rho}^{\gamma-1})\tilde{\rho}^3
+\tilde{\rho}_{2}\tilde{\Delta}^2$
for dealing with the fact that $\mathcal{\tilde{F}}$ has quadratic
growth in $|\nabla\psi|$. By the definition of $\zeta$,
$\tilde{\mathcal{S}}$, and $\tilde{\mathcal{B}}$, we have
\be\label{3.17} |\tilde{\mathcal{F}}(\nabla\psi,\psi)|\leq C\delta.
\ee

3. Then, by the standard existence theory of elliptic equations,
there exists a solution $\psi_{L}$ to \eqref{3.16}.
Furthermore, writing $\psi^{-}_{L}=\min\{\psi_{L},0\}$ and $\psi^{+}_{L}=\max\{\psi_{L},0\}$,
by the maximum principle with the source term (cf. Theorem 3.7 in \cite{GT}),
\be \label{3.18}
\min\limits_{\partial\Omega_{L}}\psi^{-}_{L}-\frac{C}{\lambda}\sup\limits_{\Omega_{L}}|\tilde{\mathcal{F}}|
\leq\psi_{L}
\leq\sup\limits_{\partial\Omega_{L}}\psi^{+}_{L}+\frac{C}{\lambda}\sup\limits_{\Omega_{L}}|\tilde{\mathcal{F}}|,
\ee
where $C=e^{d}-1$ with $d=\sup\{f_{2}(x_{1})-f_{1}(x_{1})\}$. Then we have
$$
-C\delta^{1-2\beta}-C\delta^{1-\beta}\leq\psi_{k}\leq m+C\delta^{1-2\beta}+C\delta^{1-\beta}
\qquad \text{for k sufficiently large}.
$$
Moreover, one can obtain some other estimates for $\psi_{k}$.
In fact, we can use the following more precise form
with the same notations and symbols as those in Chapter 12 in \cite{GT},
\be\label{3.19}
[u]_{1,\alpha}\leq C(\gamma,\Omega)\Big(1+\|\nabla u\|_{0}+\frac{\|f\|_0}{\lambda}\Big).
\ee
Here, $C(\gamma,\Omega)$ depends only on $\text{diam}(\Omega)$ and the $C^{2}$--norm of $\partial\Omega$.

Applying estimate \eqref{3.19} to problem \eqref{3.16} deduces that
there exists $\mu=\mu(\frac{\Lambda}{\lambda})>0$ such that,
for any $x_{0}\in\bar{\Omega}_{L}$ and for $\psi_{k}$ with $k\geq 4L$, we have
\be\label{3.20}
[\psi_{k}]_{1,\mu;B_{1}(x_{0})\cap\Omega_{L}}
\leq C(\frac{\Lambda}{\lambda},\|f_1\|_2, \|f_2\|_{2})
\Big(1+\|\nabla \psi_{k}\|_{0;B_{1}(x_{0})\cap\Omega_{L}}
+\frac{\|\tilde{\mathcal{F}}\|_0}{\lambda}\Big).
\ee
Furthermore, using the interpolation inequality and the maximum principle \eqref{3.18},
we obtain
\begin{equation*}
%\begin{array}{lll}
\|\psi_{k}\|_{1;B_{1}(x_{0})\cap\Omega_{L}}
\leq\eta C(\frac{\Lambda}{\lambda},\|f_1\|_2, \|f_2\|_{2})\big(1+\|\nabla \psi_{k}\|_{0;B_{1}(x_{0})
\cap\Omega_{L}}+\frac{\|\tilde{\mathcal{F}}\|_0}{\lambda}\big)
+C_{\eta}\big(m+ \frac{\|\tilde{\mathcal{F}}\|_0}{\lambda}\big),
%\end{array}
\end{equation*}
where $C>0$ is the same constants as that in \eqref{3.18}.
Taking $\eta_{0}$ sufficiently small so that
$\eta C(\frac{\Lambda}{\lambda},\|f_1\|_2,\|f_2\|_{2})\leq \frac{1}{2}$
if $\eta\leq \eta_{0}$, then
\be\label{3.21}
\|\psi_{k}\|_{1;B_{1}(x_{0})\cap\Omega_{L}}
\leq\eta C(\frac{\Lambda}{\lambda},\|f_1\|_2, \|f_2\|_{2})\big(1+\frac{\|\tilde{\mathcal{F}}\|_0}{\lambda}\big)
+C_{\eta}\big(m+\frac{\|\tilde{\mathcal{F}}\|_0}{\lambda}\big).
\ee

Thus, the H\"{o}lder estimate \eqref{3.20} becomes
\begin{equation}\label{3.22}
\begin{array}{lll}
&&\|\psi_{k}\|_{1,\mu;B_{1}(x_{0})\cap\Omega_{L}}\\[1.5mm]
&&=\|\psi_{k}\|_{1;B_{1}(x_{0})\cap\Omega_{L}}
+[\psi_{k}]_{1,\mu;B_{1}(x_{0})\cap\Omega_{L}}\\[1.5mm]
&&\leq \big(1+C(\frac{\Lambda}{\lambda},\|f_1\|_2, \|f_2\|_{2})\big)\|\psi_{k}\|_{1;B_{1}(x_{0})\cap\Omega_{L}}
+C(\frac{\Lambda}{\lambda},\|f_1\|_2, \|f_2\|_{2})\big(m+\frac{\|\tilde{\mathcal{F}}\|_0}{\lambda}\big)\\[1.5mm]
&&\leq C(\frac{\Lambda}{\lambda},\|f_1\|_2, \|f_2\|_{2})\big(1+m+\frac{\|\tilde{\mathcal{F}}\|_0}{\lambda}\big).
\end{array}
\end{equation}
Since, for any $x,y\in\bar{\Omega}_{L}$,
$$\frac{|\nabla\psi_{k}(x)-\nabla\psi_{k}(y)|}{|x-y|^{\mu}}\leq
\begin{cases}
  \|\psi_{k}\|_{1,\mu;B_{1}(x_{0})\cap\Omega_{L}}\qquad &\text{if}\ y\in B_{1}(x_{0})\cap\Omega_{L},\\[1.5mm]
 2\|\psi_{k}\|_{1;B_{1}(x_{0})\cap\Omega_{L}}\qquad&\text{if}\ y\notin B_{1}(x_{0})\cap\Omega_{L},
\end{cases}
$$
which, together with \eqref{3.21} and \eqref{3.22}, yields the following H\"{o}lder estimate:
\be\label{3.23}
[\psi_{k}]_{1,\mu;\Omega_{L}}
\leq C(\frac{\Lambda}{\lambda},\|f_1\|_2, \|f_2\|_{2})\big(1+m+\frac{\|\tilde{\mathcal{F}}\|_0}{\lambda}\big).
\ee
Thus, it follows from the standard Schauder estimate
that
$$
\|\psi_{k}\|_{2,\alpha;B_{1/2}(x_{0})\cap\Omega_{L}}
\leq C(\frac{\Lambda}{\lambda},\|f_1\|_{C^{2,\alpha}}, \|f_2\|_{C^{2,\alpha}}, m,\frac{\|\tilde{\mathcal{F}}\|_0}{\lambda}).
$$
Thus,
\be\label{3.24}
\|\psi_{k}\|_{2,\alpha;\Omega_{L}}
\leq C(\frac{\Lambda}{\lambda},\|f_1\|_{2,\alpha},\|f_2\|_{{2,\alpha}}, m,\frac{\|\tilde{\mathcal{F}}\|_0}{\lambda}).
\ee

4. Using the Arzela-Ascoli lemma and a diagonal procedure, we see that there exists a subsequence $\psi_{k_{l}}$
such that
$$
\psi_{k_{l}}\rightarrow\psi\ \text{in}\ C^{2,\vartheta}(K)\qquad
\text{for any compact set}\ K\subset\bar{\Omega}\ \text{and}\ \vartheta<\alpha.
$$
Here, $\psi$ satisfies the following problem:
$$
\begin{cases}
 \tilde{A}_{ij}(\nabla\psi,\psi)\partial_{ij}\psi=\mathcal{\tilde{F}}(\nabla\psi,\psi)\qquad
    \text{in}\ \Omega,\\[2mm]
\psi=\frac{x_{2}-f_{1}(x_{1})}{f_{2}(x_{1})-f_{1}(x_{1})}m\qquad\qquad\qquad\,\, \text{on}\ \partial\Omega,
\end{cases}
$$
with the estimate

\begin{equation}\label{3.25}
\begin{array}{lll}
\|\psi\|_{1;\Omega}&\leq\eta
C(\lambda,\|f_1\|_{2}, \|f_2\|_2)\big(1+\frac{\|\tilde{\mathcal{F}}\|_0}{\lambda}\big)
+C_{\eta}\big(m+\frac{\|\tilde{\mathcal{F}}\|_0}{\lambda}\big)\\
&\leq\eta C(\lambda,\|f_1\|_{2}, \|f_2\|_2)\big(1+C\delta\big)
+C_{\eta}\big(m+C\delta\big),
\end{array}
\end{equation}
where $\eta\in(0,\eta_{0})$ and $C$ depends only on
$\bar{\delta}_{0}$, $\bar{m}$, $\Lambda$, and $\lambda$.
Next, we prove that
$$
|\nabla\psi|^2\leq(\gamma-1)\mathcal{S}\rho^{\gamma+1}-2\epsilon.
$$
Otherwise,
\begin{eqnarray*}
(\gamma-1)\mathcal{S}\rho^{\gamma+1}&<&|\nabla\psi|^2+2\epsilon\\
&\leq&\eta\, C(\lambda, \|f_i\|_2)(1+C\delta)+C_{\eta}(m+C\delta)\\
&\leq& (\gamma-1)\mathcal{S}\big(\frac{2\mathcal{B}}{(\gamma+1)\mathcal{S}}\Big)^{\frac{\gamma+1}{\gamma-1}}.
\end{eqnarray*}
Thus,
$$
\rho<\Big(\frac{2\mathcal{B}}{(\gamma+1)\mathcal{S}}\Big)^{\frac{1}{\gamma-1}}
$$
and
$$
2\mathcal{B}\rho^2-(\gamma+1)\mathcal{S}\rho^{\gamma+1}\geq0,
$$
which contradict with the fact that
$$
2\mathcal{B}\rho^2-(\gamma+1)\mathcal{S}\rho^{\gamma+1}=\zeta_0(|\nabla\psi|^2-(\gamma-1)\mathcal{S}\rho^{\gamma+1})<0.
$$
Thus, the solution $\psi$ satisfies
\be\label{3.26}
|\nabla\psi|^{2}\leq \underline{\Sigma}(\epsilon)-2\epsilon
\ee
for
any $\delta\in(0,\delta_{1})$ and
$m\in(\delta^{\beta},2\delta^{\beta/2}_{1})$.  Then \eqref{3.11}
follows from \eqref{3.25} and \eqref{3.26}.

Furthermore, \eqref{3.23} and \eqref{3.24} yield the following higher order estimates
\be
\|\psi\|_{1,\mu;\bar{\Omega}}
\leq C(\frac{\Lambda}{\lambda},\|f_1\|_{2}, \|f_2\|_2)\big(1+m+\frac{\|\mathcal{F}\|_0}{\lambda}\big),
\ee
and
\be\label{3.28}
\|\psi\|_{2,\bar{\Omega}}\leq C(\frac{\Lambda}{\lambda},\|f_1\|_{{2,\alpha}}, \|f_2\|_{2,\alpha}, m,
\frac{\|\mathcal{F}\|_0}{\lambda}).
\ee
This completes the proof.
\end{proof}

\begin{remark} Estimate \eqref{3.11} in Proposition \ref{1} implies that
the cut--off function introduced in \eqref{cut-off} and
\eqref{cut-off using} can be removed.
\end{remark}

\section{Far Field Behavior of Solutions of {\bf Problem 3}}
%\medskip

In this section, we study the far field behavior of solutions to
{\it Problem 3}. We now show that the solutions to {\it Problem 3}
satisfy the asymptotic behavior \eqref{1.18}--\eqref{1.24}, and
$0\leq\psi\leq m$. From this, we can remove both the extension and
the elliptic cut--off \eqref{3.10}. Therefore, these solutions solve
{\it Problem 2}. In addition, the stream function formulation is
consistent with the formulation of {\it Problem 1} for the
non-isentropic  Euler system in the infinitely long nozzle, as long
as the flow induced by a solution to {\it Problem 2}  satisfies
\eqref{1.18}--\eqref{1.24} and \eqref{2.10}. Furthermore, the far
field behavior is crucial also for the consequent result of the
uniqueness of the solutions. First we have

\begin{lemma} \label{3}
For $\epsilon>0$, there exists $\delta_{2}\in (0,\bar{\delta}_{0}]$
such that, if
\begin{itemize}
\item[(i)] $\|(S-\underline{S},B-\underline{B})\|_{C^{1,1}}\leq\delta\leq\delta_{2}$;

\medskip
\item[(ii)] $m\in(\delta^{\beta},\bar{m})$, where $\bar{m}$ is defined as Proposition {\rm \ref{2}},

\end{itemize}
\medskip
then there exists a function $\bar{\psi}$ that satisfies
$$
\psi\rightarrow\bar{\psi} \qquad\text{as }x_1\rightarrow-\infty,
$$
and
 \be\label{4.6}
 \bar{\psi}(x_{1},x_{2})=\bar{\psi}(x_{2})=\int_{0}^{x_{2}}\rho_{0}(s;p_0)u_{0}(s)ds,
 \ee
where $\rho_{0}$ and $u_{0}$ are uniquely determined by $S$, $B$,
and $m$ as \S 2, so $\bar{\psi}$ is independent of $x_{1}$.
\end{lemma}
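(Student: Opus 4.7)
The plan is to prove the asymptotic behavior via a translation-compactness argument, followed by a uniqueness result on the limiting infinite strip.

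First I would define the translates $\psi^n(x_1,x_2):=\psi(x_1-n,x_2)$ on the shifted domains $\Omega_n:=\Omega-(n,0)$. By the convergence assumption \eqref{1.4}, the sets $\Omega_n$ exhaust the infinite strip $\Pi:=\mathbb{R}\times(0,1)$ in the Hausdorff sense, and the shifted boundaries converge to $\{x_2=0\}\cup\{x_2=1\}$ in $C^{2,\alpha}_{\mathrm{loc}}$. The uniform $C^{2,\alpha}(\overline{\Omega})$ bound from Proposition \ref{1} (inequality \eqref{3.28}) is translation-invariant, so $\{\psi^n\}$ is precompact in $C^{2,\vartheta}_{\mathrm{loc}}(\overline{\Pi})$ for any $\vartheta<\alpha$. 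Extract a subsequence $\psi^{n_k}\to\bar\psi$ in this topology, with $\bar\psi$ inheriting the pointwise bounds $0\le\bar\psi\le m$ that will be verified in the rest of the paper, and satisfying the boundary conditions $\bar\psi=0$ on $\{x_2=0\}$, $\bar\psi=m$ on $\{x_2=1\}$.

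Second, passing to the limit in the equation is straightforward because the coefficients $\tilde A_{ij}(\nabla\psi,\psi)$ and the nonlinear source $\tilde{\mathcal{F}}$ are continuous in $(\psi,\nabla\psi)$ and uniformly elliptic by Proposition \ref{1}; thus $\bar\psi$ solves \eqref{3.10} on the strip $\Pi$, and in particular the Bernoulli relation
\begin{equation*}
\tfrac12|\nabla\bar\psi|^{2}+\tilde{\mathcal{S}}(\bar\psi)\tilde\rho^{\gamma+1}=\tilde{\mathcal{B}}(\bar\psi)\tilde\rho^{2}
\end{equation*}
is preserved in the limit. Moreover, the mass-flux identity \eqref{1.8} passes to the limit, giving $\int_{0}^{1}\tilde\rho\,\bar u\,dx_2=m$ on each vertical section of $\Pi$.

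Third, the main obstacle is to show $\bar\psi$ is independent of $x_1$. The plan is to argue by uniqueness: the problem on the strip, together with the quantitative subsonicity provided by \eqref{3.11} and the structural condition \eqref{1.16a}, admits a unique bounded $C^{2,\alpha}$ solution. Once uniqueness is established, the $x_1$-translation invariance of the strip problem forces $\bar\psi(x_1+t,x_2)=\bar\psi(x_1,x_2)$ for every $t\in\mathbb{R}$. To prove uniqueness, I would work with the difference $w=\bar\psi_1-\bar\psi_2$ of two bounded solutions: $w$ satisfies a linear uniformly elliptic equation on $\Pi$ with zero boundary data and bounded coefficients; combined with the smallness of $\delta$ (which makes the zeroth-order term in the linearized operator small compared to the elliptic constant $\lambda$), a Phragm\'en-Lindel\"of maximum principle on the strip yields $w\equiv 0$. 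I expect this uniqueness step (and in particular obtaining a sign on the linearized zeroth-order term, which is why the paper uses the particular extension $b(s)$ for $(\mathcal{B}/\mathcal{S}^{\gamma})'$ in \eqref{3.0}) to be the delicate point.

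Finally, once $\bar\psi=\bar\psi(x_2)$, the PDE degenerates to the ODE $\nabla\cdot(\nabla\bar\psi/\tilde\rho)=\tilde{\mathcal{B}}'\tilde\rho-\frac{1}{\gamma}\tilde{\mathcal{S}}'\tilde\rho^{\gamma}$, which is exactly the ODE satisfied upstream by $\psi_0(X_2):=\int_0^{X_2}\rho_0(s;p_0)u_0(s)\,ds$, since by construction $\tilde{\mathcal{S}}(\psi_0(X_2))=S(X_2)$ and $\tilde{\mathcal{B}}(\psi_0(X_2))=B(X_2)$ via the definition of $\kappa$ in \S 2.3. Both $\bar\psi$ and $\psi_0$ satisfy the same subsonic ODE with the same two-point boundary data $\bar\psi(0)=0$, $\bar\psi(1)=m$, and the same Bernoulli constant along each level. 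Standard uniqueness for this first-order ODE in $\bar\psi'$ (the density-speed relation of Lemma \ref{lem:2.1} makes $\bar\psi'$ a smooth function of $(\bar\psi,x_2)$) then forces $\bar\psi\equiv\psi_0$, yielding \eqref{4.6} and completing the proof.
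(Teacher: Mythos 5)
Your Step 1 (translation compactness using the uniform bound \eqref{3.28}) matches the paper. For the key step — showing the limit is independent of $x_1$ — you take a genuinely different route: uniqueness of bounded solutions on the strip plus translation invariance, whereas the paper differentiates the equation in $x_1$, sets $\omega=\bar\psi_{x_1}$, and kills $\omega$ by an energy estimate with a cut-off, hole-filling, and the Poincar\'e inequality on the cross-section. Your route is logically sound in outline, but it concentrates all of the difficulty into the uniqueness assertion, which you only sketch. Two cautions there: (a) since $S,B$ are merely $C^{1,1}$, the linearized equation for the difference of two solutions has a zeroth-order coefficient involving $\mathcal{S}''$, $\mathcal{B}''$ that is only in $L^\infty$ and carries no sign, and the divergence-form linearization holds only weakly (the paper explicitly notes $\bar\psi\in C^{3}$ is unknown); a Phragm\'en--Lindel\"of argument must therefore be run in non-divergence form for strong solutions, comparing against a positive supersolution such as $\sin\bigl(\pi(x_2+\epsilon_0)/(1+2\epsilon_0)\bigr)$, with $\delta$ small relative to $\lambda(\epsilon)\pi^2$ — workable, but this is the entire proof, not a remark; (b) you cannot take $0\le\bar\psi\le m$ as part of the uniqueness class here, since that bound (Proposition \ref{4}) is proved \emph{using} this lemma; only the bound $|\psi|\le C(\epsilon,\delta)$ from \eqref{3.11} is available, so uniqueness must be proved for merely bounded solutions.

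The final identification step contains a genuine error. Once $\bar\psi=\bar\psi(x_2)$, the limiting problem \eqref{4.18} is a \emph{second-order} two-point boundary value problem, for which uniqueness is not automatic; your claim that "the density-speed relation of Lemma \ref{lem:2.1} makes $\bar\psi'$ a smooth function of $(\bar\psi,x_2)$," turning it into a first-order ODE, is false — the Bernoulli relation determines $\tilde\rho$ as a function of $(|\bar\psi'|^2,\bar\psi)$, not $\bar\psi'$ as a function of $(\bar\psi,x_2)$, and the cross-sectional flux identity $\int_0^1\bar\psi_{x_2}\,dx_2=m$ is just the boundary data and pins down nothing pointwise. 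To close this step you need either the paper's argument (an integration-by-parts identity for the difference of two ODE solutions, using smallness of $\delta$ and Poincar\'e), or the observation that the ODE together with the Bernoulli relation forces $\tilde{\mathcal{S}}(\bar\psi)\tilde\rho^{\gamma}$ (the pressure) to be constant in $x_2$, after which the monotonicity in $p_0$ from Lemma \ref{lem:2.2} gives uniqueness; alternatively, if your strip uniqueness from the previous step is actually established, you could simply verify that $\int_0^{x_2}\rho_0(s;p_0)u_0(s)\,ds$ solves the strip problem and invoke that uniqueness directly, bypassing the ODE argument altogether. As written, the last step does not go through.
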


\begin{proof}
The proof is based on the blowup argument in combination with the energy estimate,
which consists of three parts:
The first is for the existence of $\bar{\psi}$, the second is for
the independence of $\bar{\psi}$ of $x_{1}$, and the third is
the explicit form \eqref{4.6} for $\bar{\psi}$.

\smallskip
1. {\it Existence of the far field function}. It is convenient to
introduce a new coordinate to flatten the boundary walls of the
nozzle, as follows:
$$
\begin{cases}
t_1(x_{1},x_{2})=x_1,\\
t_2(x_{1},x_{2})=\frac{x_2-f_1(x_1)}{f_2(x_1)-f_1(x_1)},
\end{cases}
$$
then the nozzle becomes $(-\infty,\infty)\times[0,1]$. Obviously,
the coordinate transform is reversible, since
$$
 \mbox{det}\left[\begin{array}{l}
\frac{\partial t_1}{\partial x_1}, \frac{\partial t_1}{\partial
x_2}\\[2mm]
\frac{\partial t_2}{\partial x_1}, \frac{\partial t_2}{\partial x_2}
 \end{array}\right] =
 \left| \left[\begin{array}{l}
1,\qquad 0\\[2mm]
*, \frac{1}{f_2(x_1)-f_1(x_1)}
 \end{array}\right]\right| = \frac{1}{f_2(x_1)-f_1(x_1)}\neq0.
$$
In addition, we remark here that the equation
does not change the type of ellipticity under the
coordinate transformation, since
\begin{eqnarray*}
&&a_{ij}\partial_{x_ix_j}\psi\\
&&=a_{11}\partial_{t_1t_1}\psi
   -2\Big(\frac{a_{11}\big(x_2-f_1(x_1)\big)\big(f_2'(x_1)-f_1'(x_1)\big)}{\big(f_2(x_1)-f_1(x_1)\big)^2}
   -\frac{a_{12}}{\big(f_2(x_1)-f_1(x_1)\big)}\Big)\partial_{t_1t_2}\psi\\
&&\quad+\Big(\frac{a_{11}\big(x_2-f_1(x_1)\big)^2\big)(f_2'(x_1)-f_1'(x_1)\big)^2}{\big(f_2(x_1)-f_1(x_1)\big)^4}
-\frac{2a_{12}\big(x_2-f_1(x_1)\big)\big(f_2'(x_1)-f_1'(x_1)\big)}{\big(f_2(x_1)-f_1(x_1)\big)^3}\\
&&\qquad\,\,\, +\frac{a_{22}}{\big(f_2(x_1)-f_1(x_1)\big)^2}\Big)\partial_{t_2t_2}\psi\\
&&\quad +\, \text{lower terms (involving $\partial_{t_i}\psi$ and $\psi$)}.
\end{eqnarray*}
In the new coordinates, define
$$
\psi^{(n)}=\psi(t_1(x_{1}-n,x_{2}), t_2(x_{1}-n,x_2)).
$$
For any compact set $K\subset(-\infty,\infty)\times[0,1]$, it
follows from \eqref{3.28} and the $C^{2,\a}$-bounds of the walls
$f_1$ and $f_2$ that
$$
||\psi^{(n)}||_{C^{2,\a}(K)}\leq C\qquad\mbox{for $n$ sufficiently
large.}
$$
Then, as in Step 4 of the proof to Proposition \ref{1},
there exists a subsequence $\psi^{(n_{k})}$ such that
 \be\label{4.1}
\psi^{(n_{k})}\rightarrow\bar{\psi}\qquad \text{in}\
C^{2,\vartheta}(K)
\ee
for any compact set $K\subset(-\infty,\infty)\times[0,1]$ and any
$\vartheta\in(0,\alpha)$. From \eqref{1.3}--\eqref{1.6} and
\eqref{3.28}, and the facts that $f_1(x_1)\rightarrow 0$
and $f_2(x_1)\rightarrow 1$ in $C^{2,\a}$ as
$x_1\rightarrow-\infty$, which also means that
$f'_i(x_1)\rightarrow0$ in $C^{1,\a}$ as $x_1\rightarrow-\infty$,
then $\bar{\psi}$ satisfies
 \bee\label{4.2}
\begin{cases}
\nabla\cdot(\frac{\nabla\bar{\psi}}{\tilde{\rho}(|\nabla\bar{\psi}|^{2},\bar{\psi})})
=(\tilde{B}'\tilde{\rho}
-\frac{1}{\gamma}\tilde{S}'\tilde{\rho}^{\gamma})(|\nabla\bar{\psi}|^{2},\bar{\psi})\qquad \text{in}\ D,\\[2mm]
\bar{\psi}=0\qquad \,\, \text{on}\ x_{2}=0,\\[2mm]
\bar{\psi}=m\qquad \text{on}\ x_{2}=1,
\end{cases}
 \eee
where $D=(-\infty,\infty)\times(0,1)$, and $\bar{\psi}$ also satisfies
 \be\label{4.5}
|\bar{\psi}|\leq C(\epsilon,\delta),\qquad
|\nabla\bar{\psi}|^{2}\leq \underline{\Sigma}(\epsilon)-2\epsilon.
 \ee
Thus, by similar arguments as in \S 3, on any compact set $E\subset (-\infty,\infty)\times[0,1]$,
$$
\|\bar{\psi}\|_{C^{1,\mu}(E)}\leq C(\epsilon,\delta),
$$
and
\be\label{4.4}
\|\bar{\psi}\|_{C^{2,\alpha}(E)}\leq C(\epsilon,\delta).
\ee
Thus, $\bar{\psi}\in C^{2,\alpha}(\bar{D})$. This completes the first part.

2. Differentiate the equation in \eqref{4.2} with respect to $x_{1}$
and set $\omega=\bar{\psi}_{x_{1}}$. Then
\begin{equation}\label{4.7}
\partial_{i}\Big(\frac{\tilde{A}_{ij}(\nabla\bar{\psi},\bar{\psi})}{\tilde{\rho}^{2}(|\nabla\bar{\psi}|^{2},\bar{\psi})}
\partial_{j}\omega\Big)
-\partial_{i}\Big(\frac{\tilde{\rho}_{2}(|\nabla\bar{\psi}|^{2},\bar{\psi})\partial_{i}\bar{\psi}}
 {\tilde{\rho}^{2}(|\nabla\bar{\psi}|^{2},\bar{\psi})}\omega\Big)
=\tilde{\Theta}(|\nabla\tilde{\psi}|^{2},\bar{\psi})\omega
+\tilde{\vartheta}(|\nabla\tilde{\psi}|^{2},\bar{\psi})\partial_{i}\bar{\psi}\partial_{i}\omega,
\end{equation}
where $\tilde{A}_{ij}(q,z)$, $\tilde{\Theta}(q,z)$, and
$\tilde{\vartheta}(q,z)$ satisfy
\begin{eqnarray*}
&&\tilde{A}_{ij}(q,z)=\tilde{\rho}(|q|^{2},z)\delta_{ij}-2\tilde{\rho}_{1}(|q|^{2},z)q_{i}q_{j},\label{4.8}\\
&&\tilde{\Theta}(s,z)=\tilde{\mathcal{B}}''(z)\tilde{\rho}(s,z)-\frac{1}{\gamma}\tilde{\mathcal{S}}''(z)
\tilde{\rho}^{\gamma}+\big(\tilde{\mathcal{B}}'(z)-\tilde{\mathcal{S}}'(z)
\tilde{\rho}^{\gamma-1}(s,z)\big)\tilde{\rho}_{2}(s,z),\label{4.9}\\
&&\tilde{\vartheta}(s,z)=2\big(\tilde{\mathcal{B}}'(z)-\tilde{\mathcal{S}}'(z)
\tilde{\rho}^{\gamma-1}(s,z)\big)\tilde{\rho}_{1}(s,z), \label{4.10}
\end{eqnarray*}
for $q\in\mathbb{R}^{2}$, $s\geq0$, and $z\in\mathbb{R}$, where
$(\tilde{S}, \tilde{B})\in C^{1,1}(\mathbb{R})$. Since it is unknown
whether $\bar{\psi}\in C^{3}(D)$, equation \eqref{4.7} holds in the
weak sense. It follows from \eqref{4.5} that
$$
|\tilde{A}_{ij}(\nabla\bar{\psi},\bar{\psi})|\leq\Lambda(\epsilon),
$$
where $\Lambda$ depends only on $\epsilon$. Furthermore, $\omega$
satisfies the following boundary conditions:
$$
\omega=0\qquad \text{on}\ x_{2}=0,1.
$$
As usual for energy estimates, let $\eta$ be a $C^{\infty}_{0}$--function
satisfying
\be\label{4.11}
\eta=1\ \text{for}\ |s|<L,\qquad
\eta=0\ \text{for}\ |s|>L+1,\qquad \ |\eta'(s)|\leq2.
\ee
Multiply
$\eta^{2}(x_{1})\omega$ and integrate it on both sides of
\eqref{4.7}, then integrate the left side, and plug the
explicit forms of $\tilde{A}_{ij}$,
$\tilde{\r}_1(|\nabla\bar{\psi}|^2,\bar{\psi})$, and
$\tilde{\r}_2(|\nabla\bar{\psi}|^2,\bar{\psi})$ into it. We obtain
\begin{eqnarray}\label{4.8a}
\iint_{D}\frac{\eta^{2}|\nabla\omega|^{2}}{\tilde{\rho}
(|\nabla\bar{\psi}|^{2},\bar{\psi})}dx_{1}dx_{2}
=\sum\limits_{i=1}^{6}I_{i},
\end{eqnarray}
where
\begin{eqnarray*}
&&I_1=-\iint_{D}\frac{|\nabla\psi\cdot\nabla\omega|^{2}\eta^{2}}
{\tilde{\rho}(|\nabla\bar{\psi}|^{2},\bar{\psi})
\big(\tilde{\rho}^{2}(|\nabla\bar{\psi}|^{2},\bar{\psi})c^{2}-|\nabla\bar{\psi}|^{2}\big)}
dx_{1}dx_{2}, \nonumber\\[1.5mm]
&&I_2=-2\iint_{D}\frac{\tilde{A}_{ij}(\nabla\bar{\psi},\bar{\psi})}{\tilde{\rho}^{2}
(|\nabla\bar{\psi}|^{2},\bar{\psi})}\eta\omega \partial_{j}\omega\partial_{i}\eta dx_{1}dx_{2},\\[1.5mm]
&&I_3=\iint_{D}\frac{\tilde{\rho}_{2}(|\nabla\bar{\psi}|^{2},\bar{\psi})
   \nabla\bar{\psi}\cdot\nabla\eta}{\tilde{\rho}^{2}
(|\nabla\bar{\psi}|^{2},\bar{\psi})}\eta\omega^{2} dx_{1}dx_{2},\nonumber\\[1.5mm]
&&I_4=2 \iint_{D}\frac{\big(\tilde{\mathcal{B}}'(\bar{\psi})-\tilde{\mathcal{S}}'(\bar{\psi})
\tilde{\rho}^{\gamma-1}(s,\bar{\psi})\big)\nabla\psi\cdot\nabla\eta}
{\tilde{\rho}^{2}(|\nabla\bar{\psi}|^{2},\bar{\psi})c^{2}-|\nabla\bar{\psi}|^{2}}\eta\omega^{2} dx_{1}dx_{2},
  \nonumber\\[1.5mm]
&&I_5=-\iint_{D}\big(\tilde{\mathcal{B}}''(\bar{\psi})\tilde{\rho}(s,\bar{\psi})
-\frac{1}{\gamma}\tilde{\mathcal{S}}''(\bar{\psi})
\tilde{\rho}^{\gamma}\big)\eta^{2}\omega^{2} dx_{1}dx_{2},\nonumber\\[1.5mm]
&&I_6=-\iint_{D}\frac{\tilde{\rho}^2\big(\tilde{\mathcal{B}}'(\bar{\psi})-\tilde{\mathcal{S}}'(\bar{\psi})
\tilde{\rho}^{\gamma-1}(|\nabla\bar{\psi}|^{2},\bar{\psi})\big)^2}{\tilde{\rho}^{2}
 (|\nabla\bar{\psi}|^{2},\bar{\psi})c^{2}
 -|\nabla\bar{\psi}|^{2}}\tilde{\rho}_{2}(|\nabla\bar{\psi}|^{2},\bar{\psi})
\eta^{2}\omega^{2}  dx_{1}dx_{2}.\nonumber
\end{eqnarray*}

Now we make the estimates.  First, by the H\"{o}lder inequality, it is easy to
see that
$$
I_{1}+I_{4}+I_{6}\leq0.
$$
Second, since $\|(S-\underline{S}, B-\underline{B})\|_{C^{1,1}([0,1])}\leq \delta$
and $m\in(\delta^{\beta},\bar{m})$, we have
$$
\|(\tilde{\mathcal{S}}-\tilde{\mathcal{S}}(0),
\tilde{\mathcal{B}}-\tilde{\mathcal{B}}(0))\|_{C^{1,1}(\mathbb{R})}\leq \delta^{1-\beta}.
$$
Thus,
\be\label{4.13}
|I_{5}|\leq C\delta^{1-\beta}\int_{-L-1}^{L+1}\int_{0}^{1}\omega^{2}dx_{1}dx_{2},
\ee
and $\tilde{\rho}\leq\bar{\varrho}(\bar{D};x_2)$, where $C$ is
independent of $\epsilon$. Thus, from \eqref{4.8a} and the
definition of $\eta$, if $\delta_{2}$ is sufficiently small, we
obtain
\begin{eqnarray*}
&&\int_{-L}^{L}\int_{0}^{1}|\nabla \omega|^{2}dx_{2}dx_1\\
&&\leq |I_2+I_3|+|I_5|\\
&&\leq C(\epsilon)\Big(\int_{-L-1}^{-L}
+\int_{L}^{L+l}\Big)\Big(\int_{0}^{1}(|\nabla\omega|^{2}+|\nabla\omega|+\omega^2)dx_{2}\Big)dx_1\\
&& \quad +C\delta^{1-\beta}_2\int_{-L}^{L}\int_{0}^{1}|\nabla \omega|^{2}dx_{2}dx_{1}\qquad\\
&&\leq C(\epsilon)\Big(\int_{-L-1}^{-L}+\int_{L}^{L+l}\Big)\Big(\int_{0}^{1}(|\nabla\omega|^{2}+\omega^2)dx_{2}\Big)dx_1\\
&&\quad +\frac{1}{2}\int_{-L}^{L}\int_{0}^{1}|\nabla
\omega|^{2}dx_{2}dx_{1}.
\end{eqnarray*}
Notice that $\omega=0$ on $x_2=0,1$. It follows from the
Poincar\'{e} inequality that
there exists a constant $C$
independent of $l$ such that \be\label{4.16}
\int_{-L}^{L}\int_{0}^{1}|\nabla \omega|^{2}dx_{2}dx_{1} \leq
C\Big(\int_{-L-1}^{-L}+\int_{L}^{L+l}\Big)
\Big(\int_{0}^{1}|\nabla \omega|^{2}dx_{2}\Big)dx_1
\ee
for large $L$. It follows
from \eqref{4.4} that
$$
\int_{-L}^{L}\int_{0}^{1}|\nabla\omega|^{2}dx_{2}dx_1\leq
\Big(\int_{-L-1}^{-L}+\int_{L}^{L+l}\Big)\Big(\int_{0}^{1}|\nabla\omega|^{2}dx_{2}\Big)dx_1\leq
C
$$
for some uniform constant $C$ independent of $L$ and
for some constant $C$. Passing the limit $L\rightarrow\infty$  yields
$$
\int_{-\infty}^{\infty}\int_{0}^{1}|\nabla\omega|^{2}dx_{2}dx_1\leq
C.
$$
Hence,
\be
\Big(\int_{-L-1}^{-L}+\int_{L}^{L+l}\Big)
\Big(\int_{0}^{1}|\nabla\omega|^{2}dx_{2}\Big)dx_1\rightarrow0\qquad \text{as}\
L\rightarrow\infty.
\ee
Using \eqref{4.16} by passing the limit
$l\rightarrow\infty$ as before again, we obtain
$$
\int_{-\infty}^{\infty}\int_{0}^{1}|\nabla\omega|^{2}dx_{2}dx_{1}=0,
$$
which implies $\omega=0$.
Therefore,
$$
\bar{\psi}=\bar{\psi}(x_{2}),
$$
which solves the following boundary value problem: \bee\label{4.18}
\begin{cases}
 \frac{d}{dx_{2}}(\frac{\nabla\bar{\psi}}{\tilde{\rho}(|\nabla\bar{\psi}|^{2},\bar{\psi})})
=\tilde{G}(\nabla\bar{\psi},\bar{\psi})\qquad \text{in}\ D,\\[2mm]
\bar{\psi}(0)=0,\\[2mm]
\bar{\psi}(1)=m,
\end{cases}
\eee
which completes the first part.

\medskip
3. {\it Explicit form of $\bar{\psi}(x_2)$.} Suppose that there are two solutions
$\bar{\psi}_{1}$ and $\bar{\psi}_{2}$ to \eqref{4.18}. Let
$\bar{\phi}=\bar{\psi}_{1}-\bar{\psi}_{2}$. Then $\bar{\phi}$
satisfies \bee\label{4.19}
\begin{cases}
(\bar{a}\bar{\phi}'+\bar{b}\bar{\phi})'
=\bar{c}\bar{\phi}'+\bar{d}\bar{\phi},\\
\bar{\phi}(0)=\bar{\phi}(1)=0,
\end{cases}
\eee
where
\begin{eqnarray*}
&&\bar{a}=\int_{0}^{1}\frac{\tilde{\rho}(|\tilde{\psi}'|^{2},\tilde{\psi})
  -2\tilde{\rho}_{1}(|\tilde{\psi}'|^{2},\tilde{\psi})|\tilde{\psi}'|^{2}}{\tilde{\rho}^{2}
   (|\tilde{\psi}'|^{2},\tilde{\psi})}ds,\ \quad\bar{b}=\int_{0}^{1}\frac{-\tilde{\rho}_{2}(|\tilde{\psi}'|^{2},
   \tilde{\psi})\tilde{\psi}'}{\tilde{\rho}^{2}(|\tilde{\psi}'|^{2},\tilde{\psi})}ds,\\
&& \bar{c}=\int_{0}^{1}\tilde{\vartheta}(|\tilde{\psi}'|^{2},\tilde{\psi}')\tilde{\psi}'ds,\
\qquad\qquad\qquad\qquad\,\,\,\, \bar{d}=\int_{0}^{1}\tilde{\Theta}(|\tilde{\psi}'|^{2},\tilde{\psi}')ds,
\end{eqnarray*}
with $\tilde{\psi}=s\bar{\psi}_{1}+(1-s)\bar{\psi}_{2}$, where
$\tilde{\vartheta}$ and $\tilde{\Theta}$ are defined in \eqref{4.7}.
Multiplying $\bar{\phi}$ on both sides of equation \eqref{4.19} and
integrating it over $[0,1]$, we have
$$
-\int_{0}^{1}(\bar{a}\bar{\phi}'^{2}+\bar{b}\bar{\phi}'\bar{\phi})dx_{2}=
\int_{0}^{1}(\bar{c}\bar{\phi}'\bar{\phi}+\bar{d}\bar{\phi}^{2})dx_{2}.
$$
Thus, we have
\begin{eqnarray*}
&&\int_{0}^{1}\int_{0}^{1}\frac{\bar{\phi}'^{2}}{\tilde{\rho}^{2}
(|\tilde{\psi}'|^{2},\tilde{\psi})}dsdx_{2}\\[1.5mm]
&&=-\int_{0}^{1}\int_{0}^{1}\big(\tilde{\mathcal{B}}''(\tilde{\psi})\tilde{\rho}(|\tilde{\psi}'|^{2},\tilde{\psi})
-\frac{1}{\gamma}\tilde{\mathcal{S}}''(\tilde{\psi})
\tilde{\rho}^{\gamma}(\tilde{\psi}'^{2},\tilde{\psi})\big)\bar{\phi}^{2}dsdx_{2}\\[1.5mm]
&&\quad+\int_{0}^{1}\int_{0}^{1}\frac{\tilde{\rho}^2|\tilde{\psi}'|^{2}\bar{\phi}'^{2}}
{2\tilde{\mathcal{B}}\tilde{\rho}-(\gamma+1)
\tilde{\mathcal{S}}\tilde{\rho}^{\gamma}}dsdx_2
+\int_{0}^{1}\int_{0}^{1}\frac{(\tilde{\mathcal{B}}'\tilde{\rho}-\tilde{\mathcal{S}}'
\tilde{\rho}^{\gamma})^2\bar{\phi}^2}{2\tilde{\mathcal{B}}\tilde{\rho}-(\gamma+1)
\tilde{\mathcal{S}}\tilde{\rho}^{\gamma}}dsdx_2\\[1.5mm]
&&\quad-\int_{0}^{1}\int_{0}^{1}\frac{2(\tilde{\mathcal{B}}'\tilde{\rho}-\tilde{\mathcal{S}}'
\tilde{\rho}^{\gamma})\tilde{\psi}'\bar{\phi}'\bar{\phi}}{2\tilde{\mathcal{B}}\tilde{\rho}^2-(\gamma+1)
\tilde{\mathcal{S}}\tilde{\rho}^{\gamma+1}}dsdx_2.
\end{eqnarray*}
The sum of the last three terms is negative. By the smallness of
$\delta$ and the Poincar\'{e} inequality as in Step $2$, we
have
$$
\int_{0}^{1}|\bar{\phi}'|^{2}dx_{2}\leq0,
$$
which yields $\bar{\phi}=0$. Thus, the solution to \eqref{4.18} is
unique. Obviously, we know that
$$
\bar{\psi}=\bar{\psi}(x_{2})=\int_{0}^{x_{2}}\rho_{0}(s;p_0)u_{0}(s)ds.
$$
is a solution to the boundary value problem \eqref{4.18}. In fact,
from \eqref{2.18},
$$
\frac{u^{2}_{0}(x_{2})}{2}+\mathcal{S}(\bar{\psi}(x_2))\rho^{\gamma-1}_0(x_2;p_0)=\mathcal{B}(\bar{\psi}(x_{2})),\qquad
\rho_0(x_2;p_0)=\Big(\frac{\gamma
p_0}{(\gamma-1)\mathcal{S}(\bar{\psi}(x_2))}\Big)^{\frac{1}{\gamma}},
$$
we have
$$
\left(\frac{\bar{\psi}_{x_2}(x_2)}{\rho_0(x_2;p_0)}\right)_{x_2}=u'_0(x_2)=\mathcal{B}'(\bar{\psi})\rho_0(x_2;p_0)
-\frac{1}{\gamma}\mathcal{S}'(\bar{\psi})\rho_0^{\gamma}(x_2;p_0).
$$
 This completes the proof.
\end{proof}

It follows from Lemma \ref{3} that the flow induced by the
stream function $\psi$ satisfies \eqref{1.18}--\eqref{1.19} in the
upstream. The similar properties in the downstream can be obtained
in the same way.

As indicated at the beginning of this section, an important maximum
estimate for the stream function can be yielded as a consequence of
the far field behaviors, and one could see the reason for the way
defining $\tilde{\mathcal{B}}$ in \eqref{3.0}.

\begin{proposition}\label{4}
Suppose that \eqref{1.13} holds with $\delta\leq
\min\{\delta_{1},\delta_{2}\}$, and $(S(x_2), B(x_2))$ satisfy
\eqref{1.16a}. Then the solution to {\rm Problem 3} satisfies
\be
0\leq\psi \leq m\qquad\text{in}\ \Omega.
\ee
\end{proposition}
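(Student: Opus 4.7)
I plan to prove the two-sided bound by an energy-type maximum-principle argument applied to $w^+:=(\psi-m)^+$ and $w^-:=(-\psi)^+$; by symmetry I describe only the upper bound $\psi\leq m$, invoking the wall condition at $x_2=1$ from \eqref{1.16a}.

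First, the Dirichlet data in \eqref{3.10} give $w^+=0$ on $\partial\Omega$. By Lemma \ref{3} and its downstream counterpart, $\psi(x_1,x_2)\to\bar{\psi}(x_2)=\int_0^{x_2}\rho_0(s;p_0)u_0(s)\,ds$ as $x_1\to\pm\infty$, and since $\rho_0u_0>0$ with $\int_0^1\rho_0u_0\,dx_2=m$, one has $\bar{\psi}\in[0,m]$, so $w^+\to 0$ in the far fields. Multiplying the PDE in \eqref{3.10} by $w^+\eta_R^2(x_1)$ with a longitudinal cutoff $\eta_R$ as in \eqref{4.11}, integrating by parts over $\Omega$, and letting $R\to\infty$ (the $\nabla\eta_R$ remainders vanish by the far-field convergence together with \eqref{3.28}), I obtain
$$
\int_{\{\psi>m\}}\frac{|\nabla w^+|^2}{\tilde{\rho}}\,dx
= -\int_{\{\psi>m\}}\left(\tilde{\mathcal{B}}'(\psi)\tilde{\rho}-\tfrac{1}{\gamma}\tilde{\mathcal{S}}'(\psi)\tilde{\rho}^{\gamma}\right)w^+\,dx.
$$

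The crux is to show the right-hand side is bounded by a small multiple of the left. Differentiating \eqref{3.0} gives $\tilde{\mathcal{B}}'=\gamma(\tilde{\mathcal{S}}'/\tilde{\mathcal{S}})\tilde{\mathcal{B}}+\tilde{\mathcal{S}}^{\gamma}b$, whence
$$
\tilde{\mathcal{B}}'\tilde{\rho}-\tfrac{1}{\gamma}\tilde{\mathcal{S}}'\tilde{\rho}^{\gamma}
= \tilde{\mathcal{S}}^{\gamma}b(\psi)\tilde{\rho}
+ \tilde{\mathcal{S}}'(\psi)\tilde{\rho}\left(\gamma\tilde{\mathcal{B}}/\tilde{\mathcal{S}}-\tilde{\rho}^{\gamma-1}/\gamma\right).
$$
On $\{m<\psi\leq 2m\}$, the piecewise extensions give $b(\psi)=(\mathcal{B}/\mathcal{S}^{\gamma})'(m)(2m-\psi)/m$ and $\tilde{\mathcal{S}}'(\psi)=\mathcal{S}'(m)(2m-\psi)/m$; the second bracket above is uniformly bounded on the subsonic range by Lemma \ref{lem:2.1}, so the second summand is $O(\delta^{1-\beta})$ via $|\mathcal{S}'|\leq C\delta$. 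The first summand carries the sign of $b(m)$, which under the change of variable $\psi=\int_0^{x_2}\rho_0u_0\,ds$ (with positive Jacobian $\kappa'(m)$) and the smallness \eqref{1.13} is matched to the sign prescribed by the wall hypothesis $(SB^{-\gamma})'(1)\leq 0$ of \eqref{1.16a}, yielding a favorable sign on $\{\psi>m\}$. Combining with the Poincar\'e inequality across the uniformly bounded cross-section (applicable since $w^+$ vanishes on $W_1\cup W_2$) produces
$$
\int_{\{\psi>m\}}|\nabla w^+|^2\,dx\leq C\delta^{1-\beta}\int_{\{\psi>m\}}|\nabla w^+|^2\,dx,
$$
forcing $w^+\equiv 0$ for $\delta$ small enough. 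The lower bound $\psi\geq 0$ follows by the parallel argument applied to $w^-$ on $\{-m\leq\psi<0\}$, using the extension of $\tilde{\mathcal{S}},\tilde{\mathcal{B}}$ to $[-m,0]$ in \eqref{3.0} and the hypothesis $(SB^{-\gamma})'(0)\geq 0$.

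The main obstacle is the sign identity at the gluing points $\psi=m$ and $\psi=0$: the particular form chosen for $b$ in \eqref{3.0} as the derivative of $\mathcal{B}/\mathcal{S}^{\gamma}$, rather than $\mathcal{B}'$ alone, is engineered precisely so that the leading contribution to $\tilde{\mathcal{B}}'\tilde{\rho}-\tfrac{1}{\gamma}\tilde{\mathcal{S}}'\tilde{\rho}^{\gamma}$ on the extended region is tied to the wall hypothesis \eqref{1.16a}. Verifying this correspondence and quantifying the $O(\delta^{1-\beta})$ mismatch are the most delicate steps; once in hand, the standard Poincar\'e absorption closes the argument.
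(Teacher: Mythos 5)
Your strategy is genuinely different from the paper's: the paper localizes the problem to a compact region using the far-field limits (so that $-\epsilon\le\psi\le m+\epsilon$ for $|x_1|\ge L$) and then runs a \emph{pointwise} maximum-principle argument, deriving a contradiction at an interior maximum point $X_{\max}$ of $\psi$ on $\{|x_1|\le L\}$. The crucial step there is that $\nabla\psi(X_{\max})=0$ forces the density to take the stagnation value $\hat{\rho}^{\gamma-1}=\tilde{\mathcal{B}}/\tilde{\mathcal{S}}$, and \emph{only at this value} does the source term $\tilde{\mathcal{B}}'\tilde{\rho}-\frac{1}{\gamma}\tilde{\mathcal{S}}'\tilde{\rho}^{\gamma}$ collapse to a single logarithmic derivative whose sign is controlled by the extension \eqref{3.0} together with \eqref{1.16a}; the negative semidefiniteness of $D^2\psi$ at the maximum then yields the contradiction. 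Your integral version discards exactly this information, and that creates two genuine gaps.

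First, the sign. Your own decomposition leaves the remainder $\tilde{\mathcal{S}}'(\psi)\tilde{\rho}\bigl(\gamma\tilde{\mathcal{B}}/\tilde{\mathcal{S}}-\tilde{\rho}^{\gamma-1}/\gamma\bigr)$, and since $\tilde{\rho}^{\gamma-1}\le\tilde{\mathcal{B}}/\tilde{\mathcal{S}}$ the bracket is positive and bounded away from zero throughout the subsonic range; hence this term carries the sign of $\mathcal{S}'(m)$, i.e.\ of $S'(1)$, which is \emph{not} constrained by \eqref{1.13} or \eqref{1.16a}. When $S'(1)<0$ it enters with the unfavorable sign on $\{\psi>m\}$ and is merely $O(\delta^{1-\beta})$ small, not zero; calling it a ``mismatch'' does not make it harmless, because of the second problem. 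Second, and fatally, the absorption step fails on homogeneity grounds: the source does not vanish at $\psi=m$, so testing with $w^{+}=(\psi-m)^{+}$ yields at best
\be
\int_{\{\psi>m\}}\frac{|\nabla w^{+}|^{2}}{\tilde{\rho}}\,dx\;\leq\; C\delta^{1-\beta}\int_{\{\psi>m\}} w^{+}\,dx,
\ee
which is \emph{linear} in $w^{+}$ on the right. The Poincar\'{e} inequality converts $\int (w^{+})^{2}$, not $\int w^{+}$, into $\int|\nabla w^{+}|^{2}$, so you cannot reach $\int|\nabla w^{+}|^{2}\leq C\delta^{1-\beta}\int|\nabla w^{+}|^{2}$; an inequality of the form $\int|\nabla w^{+}|^{2}\leq \epsilon\int w^{+}$ is perfectly consistent with a small nonzero $w^{+}$. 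This is precisely why the paper's energy arguments of this type (Step 2 of Lemma \ref{3}, and Lemma \ref{5}) are applied to \emph{differentiated} quantities $\omega=\bar{\psi}_{x_1}$ and $w=\psi_{x_2}$, for which the linearized equation is homogeneous and the test function produces $\omega^{2}$, resp.\ $w^{2}$, on the right; for $\psi-m$ itself the equation has a zeroth-order inhomogeneity and the method does not transfer. To prove Proposition \ref{4} you should keep your far-field reduction but replace the energy identity by the pointwise evaluation of the equation at an interior extremum, where the vanishing gradient is what makes \eqref{1.16a} applicable.
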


\begin{proof} It follows from Proposition \ref{3} that
$$
\psi(x_{1},x_{2})\rightarrow
\int_{0}^{x_{2}}\rho_{0}(s;p_0)u_{0}(s)ds\qquad \text{uniformly as}\
x_{1}\rightarrow-\infty,
$$
and
$$
\psi(x_{1},x_{2})\rightarrow
\int_{0}^{x_{2}}\rho_{1}(s;p_1)u_{1}(s)ds\qquad \text{uniformly as}\
x_{1}\rightarrow \infty.
$$
Therefore, for any $\epsilon>0$, there exists $L>0$ such that
\be\label{4.20}
-\epsilon\leq\psi(x_{1},x_{2})<m+\epsilon\qquad \text{if}\ |x_{1}|\geq L.
\ee
We claim that
\be
-\epsilon\leq\psi(x_{1},x_{2})<m+\epsilon\qquad \text{if}\ |x_{1}|\leq L.
\ee

We show our claim by contradiction. More precisely, suppose that
there exists a point $X_{\max}=(x_{10},x_{20})$ with $|x_{10}|\leq
L$ such that
$$
\psi(X_{\max})=\max\limits_{X\in\{|x_{1}|\leq L\}}\psi(x_{1},x_{2})\geq m+\epsilon.
$$
Let $\hat{\rho}=\tilde{\rho}(\psi(X_{\max}))$. We  have
\begin{equation*}
\begin{array}{lll}\tilde{A}_{ij}(0,\psi(X_{\max}))\partial_{ij}\psi(X_{\max})&=&\hat{\rho}
(\mathcal{B}'-\frac{1}{\gamma}\mathcal{S}'\hat{\rho}^{\gamma-1})\\[2mm]
&=&\hat{\rho}
(\mathcal{B}'-\frac{1}{\gamma}\frac{\mathcal{B}}{\mathcal{S}}\mathcal{S}')(\psi(X_{\max}))\\[2mm]
&=&\frac{\hat{\rho}}{\mathcal{B}(\psi(X_{\max}))}(\ln(\mathcal{S}^{-\gamma}\mathcal{B}))'(\psi(X_{\max})))\geq0,
\end{array}
\end{equation*}
where we have used the fact that $\nabla\psi(X_{\max})=0$,
hence $\hat{\rho}^{\gamma-1}=\frac{\mathcal{B}}{\mathcal{S}}(\psi(X_{\max}))$.
Thus, we have
$$
0\leq\tilde{A}_{ij}(0,\psi(X_{\max}))\partial_{ij}\psi(X_{\max})<0,
$$
which is a contradiction. That is,
$$
-\epsilon\leq\psi(x_{1},x_{2})<m+\epsilon\qquad \text{in}\ \{\psi\geq m\}\cap\{|x_{1}|\leq L\}.
$$
Since $\frac{d}{d\psi}\ln(\mathcal{S}^{-\gamma}\mathcal{B})\leq0$ in
the domain $\{\psi\leq 0\}$, we can similarly show that
$$
-\epsilon\leq\psi(x_{1},x_{2})<m+\epsilon\qquad \text{in}\ \{\psi\leq0\}\cap\{|x_{1}|\leq L\}.
$$
Combining these estimates together, we obtain
$$
-\epsilon\leq\psi(x_{1},x_{2})<m+\epsilon\qquad \text{in}\ \Omega.
$$
Since $\epsilon$ is arbitrary, we have
$$
0\leq\psi(x_{1},x_{2})\leq m\qquad \text{in}\ \Omega.
$$
This completes the proof.
\end{proof}

\section{Existence and Uniqueness of Solutions of {\bf Problem 2}}

Propositions \ref{1} and \ref{4} imply that the solutions
established in Proposition \ref{1} are the solutions of the boundary
value problem \eqref{2.33}, {\it Problem 2}.

\begin{proposition}
Let the boundary $\partial\Omega$ satisfy \eqref{1.3}--\eqref{1.6}.
Let \eqref{1.13} hold with $\delta\leq \min\{\delta_{1},\delta_{2}\}$,
and $(S(x_2), B(x_2))$ satisfy \eqref{1.16a}.
Then there exists $0<\delta_1\leq\min\{\bar{\delta}_0,\bar{\delta}_1\}$ such that,
if
$\|(S-\underline{S}, B-\underline{B})\|_{C^{1,1}([0,1])}\leq\delta$ with $0<\delta\leq \delta_1$
and $m\in(\delta^{\beta},m_1)$ with $m_1=2\delta^{\beta/2}\leq\bar{m}$,
where $\bar{m}$ is defined in {\rm Proposition 2.2},
then {\rm Problem 2} (i.e. \eqref{2.33}) has a uniformly subsonic
solution $\psi\in C^{2,\alpha}(\overline{\Omega})$ satisfying
$$
 0\leq\psi \leq m\qquad\text{in}\ \Omega.
$$
\end{proposition}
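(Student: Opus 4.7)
The plan is to assemble the three key results already established in Sections 3 and 4 into a single conclusion. First I would invoke Proposition \ref{1} directly: for $\delta\le\delta_1$ and $m\in(\delta^{\beta},m_1)$ with $m_1=2\delta^{\beta/2}$, it produces a function $\psi\in C^{2,\alpha}(\overline{\Omega})$ solving the modified boundary value problem \eqref{3.10} together with the pointwise subsonicity bound $|\nabla\psi|^{2}\le(\gamma-1)\tilde{\mathcal{S}}\tilde{\rho}^{\gamma+1}-2\epsilon$. The key observation is that this bound forces the argument of the cut--off $\zeta_{0}$ in \eqref{cut-off using} to satisfy $|\nabla\psi|^{2}-(\gamma-1)\tilde{\mathcal{S}}\tilde{\rho}^{\gamma+1}\le-2\epsilon<-\epsilon$, where $\zeta_{0}$ acts as the identity. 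Hence the elliptic cut--off is inactive and $\tilde{\rho}$ agrees with the genuine subsonic density $\rho(|\nabla\psi|^{2},\psi)$ determined from \eqref{2.4a}.

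Next I would control the range of $\psi$. Lemma \ref{3} gives the upstream asymptotic behavior
\[
\psi(x_{1},x_{2})\longrightarrow\int_{0}^{x_{2}}\rho_{0}(s;p_{0})u_{0}(s)\,ds\qquad\text{uniformly as }x_{1}\to-\infty,
\]
and the completely analogous downstream statement (noted by the authors to follow by the same blow--up/energy argument) yields the corresponding convergence to $\int_{0}^{x_{2}}\rho_{1}(s;p_{1})u_{1}(s)\,ds$ as $x_{1}\to+\infty$. With both far field limits in hand, the hypothesis \eqref{1.16a} is exactly what is needed to invoke Proposition \ref{4}, which, via the maximum principle argument at the points where $\psi$ hypothetically exceeds $m$ or falls below $0$, delivers the pointwise bound $0\le\psi\le m$ throughout $\Omega$.

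Once $\psi$ is confined to $[0,m]$, the extensions $\tilde{\mathcal{S}}$ and $\tilde{\mathcal{B}}$ defined in \eqref{3.0} coincide with the original $\mathcal{S}(\psi)=S(\kappa(\psi))$ and $\mathcal{B}(\psi)=B(\kappa(\psi))$ on the entire range of $\psi$. Combined with the previous paragraph's observation that the cut--off is inactive, equation \eqref{3.10} therefore reduces identically to the PDE in \eqref{2.33}, and the boundary data of Problem 3 coincide with those prescribed in Problem 2. Thus the same $\psi$ solves Problem 2, and the strict inequality $|\nabla\psi|^{2}\le(\gamma-1)\mathcal{S}\rho^{\gamma+1}-2\epsilon$ gives uniform subsonicity.

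I do not foresee any real obstacle, since all of the analytic work (existence for the modified problem, far field limits, and the two--sided maximum bound for $\psi$) has been carried out in the preceding propositions. The only bookkeeping point is to verify that the thresholds are compatible: one must take $\delta_{1}\le\min\{\bar{\delta}_{0},\bar{\delta}_{1},\delta_{2}\}$ so that Propositions \ref{1}, \ref{3}, and \ref{4} can be applied simultaneously, and restrict $m\in(\delta^{\beta},m_{1})$ with $m_{1}=2\delta^{\beta/2}\le\bar{m}$ from Proposition \ref{2} so that the upstream and downstream pressures $p_{0},p_{1}$ used in Lemma \ref{3} are well defined and strictly subsonic. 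Once these choices are made, the three inputs chain together and the proposition follows with no additional estimates.
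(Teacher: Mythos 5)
Your proposal is correct and follows essentially the same route as the paper, which simply combines Proposition \ref{1} (existence for the modified problem plus the gradient bound \eqref{3.11} that deactivates the cut--off, as noted in Remark 3.2) with Lemma \ref{3} and Proposition \ref{4} (the far--field limits and the bound $0\le\psi\le m$ that deactivate the extensions of $\mathcal{S}$ and $\mathcal{B}$). Your added bookkeeping on the compatibility of the smallness thresholds is consistent with the paper's choice of constants.
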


Next, we will use the energy estimates again to show that uniformly
subsonic solutions of {\it Problem 2} are unique.

\begin{proposition}\label{6} Let the boundary $\partial\Omega$ satisfy \eqref{1.3}--\eqref{1.6}.
Then there exists $\delta_{3}\in(0,\bar{\delta}_{0}]$ such that, if
\begin{itemize}
\item[(i)]\ $\|(S-\underline{S}, B-\underline{B})\|_{C^{1,1}([0,1])}\leq\delta$ with $0<\delta\leq \delta_{3}$,

\medskip
\item[(ii)]\ $m\in(\delta^{\beta},\bar{m})$,
\end{itemize}
then there exists at most one solution $\psi$ of {\rm Problem 2}
satisfying
\be\label{4.23}
0\leq\psi(x_{1},x_{2})\leq m,\qquad
|\nabla\psi|^{2}\leq\underline{\Sigma}(\epsilon)-2\epsilon\,\,\,\,
\text{for some}\ \epsilon>0.
\ee
\end{proposition}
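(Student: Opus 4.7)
The plan is to apply an energy estimate to the difference of two putative solutions, in the spirit of Step 3 of the proof of Lemma~\ref{3}. Let $\psi_{1}, \psi_{2}$ be two solutions of {\it Problem 2} satisfying \eqref{4.23}, and set $\phi=\psi_{1}-\psi_{2}$. Both solutions share the same boundary values on $\partial\Omega$, so $\phi=0$ on $\partial\Omega$. Moreover, by Lemma~\ref{3} applied at the upstream (and its downstream analogue), the far fields of $\psi_{1}$ and $\psi_{2}$ are each forced to equal the unique profile $\int_{0}^{x_{2}}\rho_{0}(s;p_{0})u_{0}(s)\,ds$ (resp.\ the downstream profile), determined by $S$, $B$, $m$, and $b-a$ alone. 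Consequently $\phi(x_{1},x_{2})\to 0$ uniformly in $x_{2}$ as $x_{1}\to\pm\infty$, and together with the global $C^{2,\alpha}$ bound \eqref{3.28} one also obtains $|\nabla\phi|\to 0$ in the far fields.

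Next, subtracting the equations \eqref{2.33} for $\psi_{1}$ and $\psi_{2}$ and writing the differences of the nonlinearities as integrals along the segment $\tilde{\psi}_{s}=s\psi_{1}+(1-s)\psi_{2}$, $s\in[0,1]$, one obtains a linear divergence-form equation for $\phi$,
\[
\partial_{i}\bigl(a_{ij}\partial_{j}\phi+b_{i}\phi\bigr)=c_{i}\partial_{i}\phi+d\,\phi,
\]
with coefficients $a_{ij}, b_{i}, c_{i}, d$ explicit in terms of $\psi_{1}, \psi_{2}$ and the $s$-integrals of $\tilde{\rho}, \tilde{\rho}_{1}, \tilde{\rho}_{2}, \tilde{\Theta}, \tilde{\vartheta}$. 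The uniform subsonicity hypothesis \eqref{4.23} makes $(a_{ij})$ uniformly elliptic with constants depending on $\epsilon$.

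Then multiply both sides by $\eta^{2}(x_{1})\phi$ with a smooth cutoff $\eta$ as in \eqref{4.11}, integrate by parts over $\Omega$, and use $\phi=0$ on $W_{1}\cup W_{2}$ to discard the wall boundary terms. Ellipticity produces a coercive term $\iint \eta^{2} a_{ij}\partial_{i}\phi\,\partial_{j}\phi\,dx$; the zeroth-order contributions involving $\tilde{\mathcal{B}}''$ and $\tilde{\mathcal{S}}''$ carry the small factor $\delta^{1-\beta}$ by the construction \eqref{3.0} of the extensions, and are absorbed by the coercive term via the Poincar\'{e} inequality in $x_{2}$, available since $\phi$ vanishes on the walls. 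The cross terms containing $\nabla\eta$ are supported in the strips $L\le|x_{1}|\le L+1$ and are handled exactly as in \eqref{4.8a}--\eqref{4.16}: a first limit $L\to\infty$ yields $\iint_{\Omega}|\nabla\phi|^{2}\,dx<\infty$, so the strip integrals tend to zero, and a second limit then gives $\iint_{\Omega}|\nabla\phi|^{2}\,dx=0$, whence $\phi\equiv 0$.

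The main technical obstacle, absent in the isentropic setting of \cite{xx1}, is the control of the additional entropy contributions $\tilde{\mathcal{S}}'\tilde{\rho}^{\gamma-1}$ and $\tilde{\mathcal{S}}''\tilde{\rho}^{\gamma}$ in the coefficients $c_{i}$ and $d$. These are dominated precisely because the extension \eqref{3.0} is built from $(\mathcal{B}/\mathcal{S}^{\gamma})'$ rather than from $\mathcal{B}'$ alone, so the $C^{1,1}$ smallness \eqref{1.13} together with $m\in(\delta^{\beta},\bar{m})$ transmits to $\|(\tilde{\mathcal{S}}-\tilde{\mathcal{S}}(0),\tilde{\mathcal{B}}-\tilde{\mathcal{B}}(0))\|_{C^{1,1}(\mathbb{R})}\le\delta^{1-\beta}$; choosing $\delta_{3}$ sufficiently small relative to the Poincar\'{e} constant closes the estimate and yields uniqueness.
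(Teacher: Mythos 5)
Your proposal is correct and follows essentially the same route as the paper: linearize the difference $\psi_1-\psi_2$ along the segment $s\psi_1+(1-s)\psi_2$, test with a cutoff times the difference, use the far-field convergence (from Lemma~\ref{3} and its downstream analogue) to kill the strip terms, and absorb the zeroth-order terms by the $\delta$-smallness and the Poincar\'e inequality. The only cosmetic difference is that the paper tests separately with $\eta^2\hat\psi^{+}$ and $\eta^2\hat\psi^{-}$ rather than with $\eta^2\phi$ at once, which does not change the substance of the argument.
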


\begin{proof} As before, let $\psi_{1}$ and $\psi_{2}$ be two solutions to \eqref{2.33}.
Set $\hat{\psi}=\psi_{1}-\psi_{2}$. Then $\hat{\psi}$ satisfies
\bee\label{4.24}
\begin{cases}
\partial_{i}(a_{ij}\partial_{j}\hat{\psi})+\partial_{i}(b_{i}\hat{\psi})=c_{i}\partial_{i}\hat{\psi}+d\hat{\psi}
\qquad\mbox{in $\Omega$},\\[1.5mm]
\hat{\psi}=0\qquad \text{on}\ W_{1}\cup W_{2},
\end{cases}
\eee
where
\begin{eqnarray*}
&&a_{ij}=\int_{0}^{1}\frac{A_{ij}(D\tilde{\psi},\tilde{\psi})}{\rho^{2}(|\nabla\tilde{\psi}|^{2},\tilde{\psi})}ds,
\qquad\,\,  b_{i}=-\int_{0}^{1}\frac{\rho_{2}(|\nabla\tilde{\psi}|^{2},\tilde{\psi})\partial_{i}\tilde{\psi}}
{\rho^{2}(|\nabla\tilde{\psi}|^{2},\tilde{\psi})}ds,\\
&& c_{i}=\int_{0}^{1}\vartheta(|\nabla\tilde{\psi}|^{2},\tilde{\psi})\partial_{i}\tilde{\psi}ds,
\qquad
 d=\int_{0}^{1}\Theta(|\nabla\tilde{\psi}|^{2},\tilde{\psi})ds,
\end{eqnarray*}
 $\tilde{\psi}=s\psi_{1}+(1-s)\psi_{2}$, $A_{ij}$, $\Theta$ and $\vartheta$ are defined as \eqref{4.7},
 except we replace $(\tilde{\mathcal{S}}, \tilde{\mathcal{B}}, \tilde{\rho})$
 by $(\mathcal{S}, \mathcal{B}, \rho)$.

Multiplying $\eta^{2}\hat{\psi}^{+}$ and integrating on both sides of
\eqref{4.24}, where $\eta$ is defined in \eqref{4.11} and
$\hat{\psi}^{+}=\max\{\hat{\psi}(x),0\}$, then, similar to the proof of Lemma
\ref{3}, we have
$$
\iint_{\Omega\cap\{|x_{1}|\leq l\}\cap\{\hat{\psi}\geq0\}}|\nabla\hat{\psi}|^{2}dx_{1}dx_{2}
\leq C(\underline{B},\epsilon)\iint_{\Omega\cap\{l\leq|x_{1}|\leq l+1\}\cap\{\hat{\psi}\geq0\}}
|\nabla\hat{\psi}|^{2}dx_{1}dx_{2}.
$$
Since the solutions $\psi_{1}$ and $\psi_{2}$ have the same far
field behavior, and note that $|\hat{\psi}|$ and
$|\nabla\hat{\psi}|\rightarrow0$ as $|x_{1}|\rightarrow\infty$, we have
$$
\iint_{\Omega\cap\{\hat{\psi}\geq0\}}|\nabla\hat{\psi}|^{2}dx_{1}dx_{2}=0,
$$
Similarly, we can show that
$$
\iint_{\Omega\cap\{\hat{\psi}\leq0\}}|\nabla\hat{\psi}|^{2}dx_{1}dx_{2}=0,
$$
which implies that $\hat{\psi}=0$.
This completes the proof.
\end{proof}

\section{Refined Properties of Stream Functions for {\bf Problem 1}}

In this section, we derive some refined properties for solutions to
the boundary value problem \eqref{2.33}, {\it Problem 2}.
More precisely, it is
shown that $\psi_{x_{2}}$ is always positive,
together with the asymptotic behavior and the estimates obtained in
\S 3--5, yields that $(\rho,u,v,p)$ induced by $\psi$ satisfies the
original Euler equations, the boundary conditions, the constrains on
the mass flux, the Bernoulli constant, and the entropy equation.

\noindent
\begin{lemma}\label{5} Let the boundary $\partial\Omega$ satisfies \eqref{1.3}--\eqref{1.6}.
Then there exists $\delta_{4}\in(0,\bar{\delta}_{0}]$ such that, if
\begin{itemize}
\item[(i)]\
$\|(S-\underline{S},B-\underline{B})\|_{C^{1,1}([0,1])}\leq\delta$ with $0<\delta\leq \delta_{4}$,

\medskip
\item[(ii)]\ $m\in(\delta^{\beta},\bar{m})$,

\medskip
\item[(iii)] \ $\psi$ satisfies \eqref{4.23} and solves {\rm Problem 2},
\end{itemize}
then $\psi$ satisfies
\be\label{5.1}
0<\psi<m\qquad \text{in}\ \Omega,
\ee
and
\be\label{5.2}
\psi_{x_{2}}>0\qquad \text{in}\ \bar{\Omega}.
\ee
\end{lemma}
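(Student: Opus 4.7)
The plan is to combine the strong maximum principle and the Hopf boundary point lemma with the far-field behaviour established in Section~4, using the Bernoulli--entropy condition \eqref{1.16a} to control the sign of the nonlinear source.

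\textbf{Strict bounds \eqref{5.1}.} From Proposition \ref{4} we already know $0 \leq \psi \leq m$ in $\Omega$, so it suffices to upgrade each inequality to a strict one. Suppose for contradiction that $\psi(x_0) = 0$ at some interior $x_0$; then $x_0$ is an interior minimum, so $\nabla\psi(x_0) = 0$. Define $L(\phi) := \nabla\cdot(\nabla\phi/\rho) - \mathcal{B}'(\phi)\rho + \frac{1}{\gamma}\mathcal{S}'(\phi)\rho^{\gamma}$. Setting $\psi_t = t\psi$ and integrating $\frac{d}{dt}L(\psi_t)$ from $0$ to $1$ produces a linear elliptic operator $\bar L$, with coefficients depending on $\psi$ and $\nabla\psi$, such that $\bar L[\psi] = L(\psi) - L(0) = -L(0)$. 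The computation at the end of the proof of Proposition \ref{4} shows that \eqref{1.16a} at $\psi=0$ forces $-L(0) \leq 0$; the operator $\bar L$ is uniformly elliptic by the subsonicity estimate \eqref{3.11}, and by the $C^{1,1}$-smallness of $\delta$ its lower-order coefficients are bounded. Since $\psi$ attains its non-positive minimum value $0$ in the interior, the strong minimum principle gives $\psi \equiv 0$ on the connected domain $\Omega$, contradicting $\psi \equiv m > 0$ on $W_2$. The case $\psi(x_0) = m$ is treated symmetrically by applying the same argument to $m - \psi$, using \eqref{1.16a} at $\psi = m$.

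\textbf{Positivity of $\psi_{x_2}$ \eqref{5.2}.} With \eqref{5.1} in hand, $\psi > 0$ strictly inside $\Omega$, which allows us to apply Hopf's lemma on $W_1$ to the same linearized operator: the outward conormal derivative of $\psi$ satisfies $\partial_{\n}\psi < 0$. On $W_1 = \{x_2 = f_1(x_1)\}$, the tangential identity $\psi_{x_1} + f_1'\psi_{x_2} = 0$ converts this into $\psi_{x_2} > 0$ along $W_1$, and the same argument applied to $m - \psi$ yields $\psi_{x_2} > 0$ on $W_2$. At the two infinities, Lemma \ref{3} and its downstream counterpart give $\psi_{x_2}(x_1, \cdot) \to \rho_0 u_0 > 0$ as $x_1 \to -\infty$ and $\psi_{x_2}(x_1, \cdot) \to \rho_1 u_1 > 0$ as $x_1 \to +\infty$ uniformly, so there exists $L_\star$ such that $\psi_{x_2} > 0$ whenever $|x_1| \geq L_\star$.

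For the interior, differentiating \eqref{2.9} with respect to $x_2$ produces a linear elliptic divergence-form equation for $\omega := \psi_{x_2}$, structurally identical to \eqref{4.7} with the $x_1$-derivatives replaced by $x_2$-derivatives. On the truncated subdomain $\Omega_L := \Omega \cap \{|x_1| < L\}$ with $L > L_\star$, $\omega$ is strictly positive on $\partial \Omega_L$ by the previous two steps. A weak maximum principle (using the $\delta$-smallness to absorb the zeroth-order coefficient) gives $\omega \geq 0$ in $\Omega_L$, and the strong maximum principle rules out any interior zero; letting $L \to \infty$ yields $\omega > 0$ throughout $\bar\Omega$.

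\textbf{Main obstacle.} The chief difficulty in both parts is that the zeroth-order coefficients of the linearized operators need not have the correct sign for a direct textbook application of the strong maximum principle. This is overcome by exploiting the $C^{1,1}$-smallness of $(S-\underline{S}, B-\underline{B})$ --- exactly as in the energy estimates of Lemma \ref{3} and the uniqueness argument of Proposition \ref{6} --- so that the zeroth-order coefficient can be absorbed into the principal part by a Poincar\'e-type device, or alternatively dropped because the extremal value in question is zero.
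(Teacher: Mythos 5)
Your proof is correct, and it rests on the same toolkit the paper uses for this lemma --- the far-field limits of Lemma \ref{3}, the Hopf lemma, and a maximum-principle/energy argument in which the indefinite zeroth-order coefficient is handled by the $C^{1,1}$-smallness of $(S-\underline{S},B-\underline{B})$ together with a cross-sectional Poincar\'e inequality --- but the logical organization is genuinely different. The paper runs the steps in the opposite order: it first proves $w=\psi_{x_2}\ge 0$ in all of $\Omega$ by testing the differentiated equation \eqref{6.4a} against $w^{-}=\min\{w,0\}$, using only the far-field positivity of $w$ for $|x_1|$ large and the fact that $0\le\psi\le m$ already forces $w\ge 0$ on $\partial\Omega$ (so no boundary Hopf lemma is needed at that stage); it then upgrades to $w>0$ in $\Omega$ via the strong maximum principle after the substitution $\tilde w=e^{-\sigma x_2}w$ with $\sigma$ large, \emph{deduces} the strict bounds \eqref{5.1} from this strict monotonicity, and only at the very end applies the Hopf lemma on $W_1\cup W_2$, splitting into the cases $(\mathcal{S}\mathcal{B}^{-\gamma})'(m)<0$ and $(\mathcal{S}\mathcal{B}^{-\gamma})'(m)=0$. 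You instead prove \eqref{5.1} first and independently, by linearizing the quasilinear operator against the constant states $0$ and $m$ and using \eqref{1.16a} to fix the sign of $L(0)$ and $L(m)$; this is the device of Proposition \ref{4} pushed from a non-strict to a strict inequality, and it lets you run Hopf's lemma on the walls with a single integrated linearization, avoiding the paper's case split. The price is that your interior step then needs the boundary positivity as strict Dirichlet data on the truncated domains $\Omega\cap\{|x_1|<L\}$, whereas the paper's interior step is self-contained. Two small points should be made explicit: condition \eqref{1.16a} must be carried as a standing hypothesis (it is not listed in the statement of the lemma, but the paper's own boundary step uses it in the same implicit way), and the far-field convergence of $\psi_{x_2}$ has to be used up to the walls $x_2\in\{0,1\}$, which the $C^{2,\vartheta}$ convergence on compact subsets of $(-\infty,\infty)\times[0,1]$ in the flattened coordinates of Lemma \ref{3} does provide.
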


\begin{proof} We divide the proof into four steps.

\medskip
1. {\it Equation and the boundary condition.}
 From \eqref{4.23} and the boundary conditions: $\psi(0)=0$ and $\psi(1)=m$, we have
\be\label{5.3}
\psi_{x_{2}}\geq0\qquad \text{on}\ \partial\Omega.
\ee
Let $w=\psi_{x_{2}}$. From Lemma \ref{3}, $w$ satisfies
\begin{eqnarray}\label{6.4a}
&&\partial_{i}\Big(\frac{A_{ij}(\nabla\psi,\psi)}{\rho^{2}(|\nabla\psi|^{2},\psi)}\partial_{j}w\Big)
-\partial_{i}\Big(\frac{\rho_{2}(|\nabla\psi|^{2},\psi)\partial_{i}\psi}
{\rho^{2}(|\nabla{\psi}|^{2},{\psi})}w\Big)\nonumber\\[2mm]
&&\quad =\Theta(|\nabla\psi|^{2},\psi) w
+\vartheta(|\nabla\psi|^{2},\psi)\partial_{i}\psi\partial_{i}w
\end{eqnarray}
in the weak sense, where $A_{ij}$, $\Theta$, and $\vartheta$ are
defined in \eqref{4.7}, except replacing
$(\tilde{\mathcal{S}},\tilde{\mathcal{B}},\tilde{\rho})$ by
$(\mathcal{S}, \mathcal{B}, \rho)$.

\medskip
2. {\it Positivity in $\Omega$:} That is,
\be\label{5.5}
w\geq 0\qquad \text{in}\,\,\, \Omega.
\ee
First, the far field behavior of $\psi$ implies that
$\psi_{x_2}\ra \rho_i u_i>0$ when $(-1)^{i+1}x_1\to\infty, i=0,1$, which implies that
$w(x_{1},x_{2})>0$ for $|x_{1}|>L$ with $L$ sufficiently large. As
before, multiplying \eqref{6.4a} by $w^{-}=\min\{w,0\}$, integrating
it on both sides and noticing \eqref{5.3}, we have
\begin{eqnarray*}
&&\iint_{\{U\leq0\}}\frac{|\nabla w|}{\rho^{2}(|\nabla\psi|^{2},\psi)}dx_{1}dx_{2}\\
&&\leq -\iint_{\{w\leq0\}}\big(\mathcal{B}''(\psi)%\rho(|\nabla\psi|^{2},\psi)
-\frac{1}{\gamma}\mathcal{S}''(\psi)
\rho^{\gamma-1}(|\nabla\psi|^{2},\psi) \big)w^{2}dx_{1}dx_{2}\\
&&\leq  C\delta\iint_{\{U\leq0\}} w^{2} dx_{1}dx_{2}.
\end{eqnarray*}
For each $x_1$, we define an open set:
$$
K_{x_{1}}:=\{x_{2}\, :\, f(x_{1})\leq x_{2}\leq f(x_{2}),\
w(x_{1},x_{2})<0\}=\cup_iI_{x_{1}}^i,
$$
where each $I^{i}_{x_{1}}$ is a connected open component of $K_{x_{1}}$.
Then for every $x_{2}\in I^{i}_{x_{1}}$,
$$
w(x_{1},x_{2})=\int_{\min I^{i}_{x_{1}}}^{x_{2}}\partial_{x_{2}}w(x_{1},s)ds.
$$
Therefore,
\begin{eqnarray*}
\iint_{\{w\leq0\}} w^{2} dx_{1}dx_{2}
&=&\int_{-l}^{l}dx_{1}\sum\limits_{i}\int_{I^{i}}w^{2}(x_{1},x_{2})dx_{1}dx_{2}\\
&=&\int_{-l}^{l}dx_{1}\sum\limits_{i}\int_{ I^{i}}(\int_{\min I^{i}_{x_{1}}}^{x_{2}}\partial_{x_{2}}w(x_{1},s)ds)^{2}dx_{2}\\
&\leq&\int_{-l}^{l}\sum\limits_{i}\int_{ I^{i}}\int_{\min I^{i}}^{\max I^{i}}(\partial_{x_{2}}w(x_{1},s))^{2}ds
(\max I^{i}-\min I^{i})^{2}dx_{2}\\
&\leq&\max\limits_{x_{1}\in\mathcal{R}}|f(x_{2})-f(x_{1})|^{2}\iint_{\{w\leq0\}} |\nabla w|^{2} dx_{1}dx_{2}.
\end{eqnarray*}
Hence,
$$
\iint_{\{w\leq0\}}\frac{|\nabla w|}{\rho^{2}(|\nabla\psi|^{2},\psi)}dx_{1}dx_{2}
\leq  C\delta\iint_{\{w\leq0\}} |\nabla w|^{2} dx_{1}dx_{2},
$$
which means
$$
\iint_{\{w\leq0\}} |\nabla w|^{2} dx_{1}dx_{2}\leq0,
$$
Thus, \eqref{5.5} must hold.

\medskip
3. {\it Strict positivity in $\Omega$:} That is,
\be\label{5.6}
\psi_{x_{2}}=w> 0\qquad \text{in}\ \Omega
\ee
for any weak solutions $w$ to \eqref{6.4a}. Denote
$\tilde{w}:=e^{-\sigma x_{2}}w$, which is a nonnegative weak solution
to
$$
\partial_{i}\big(\frac{A_{ij}}{\rho^{2}}e^{\sigma x_{2}}\partial_{j}\tilde{w}\big)
+\big(\frac{A_{i2}}{\rho^{2}}\sigma-\frac{\rho_{2}\partial_{i}\psi}{\rho^{2}}-
\vartheta(|\nabla\psi|^{2},\psi)\partial_{i}\psi\big)e^{\sigma x_{2}}\partial_{i}\tilde{w}
+Ge^{\sigma x_{2}}\tilde{w}=0,
$$
where $A_{ij}$ and $\vartheta$ are defined in \eqref{4.7}, and
$$
G=\frac{A_{22}}{\rho^{2}}\sigma^{2}
+\Big(\partial_{i}(\frac{A_{i2}}{\rho^{2}})-\frac{\rho_{2}\partial_{2}\psi}{\rho^{2}}
-\vartheta(|\nabla\psi|^{2},\psi)\partial_{2}\psi\Big)\sigma
-\partial_{i}\Big(\frac{\rho_{2}\partial_{i}\psi}{\rho^{2}}\Big)
-\Theta(|\nabla\psi|^{2},\psi)
$$
with $\Theta$ also defined in \eqref{4.7}. Choosing $\sigma>0$
sufficiently large so that $G>0$, then
$$
\partial_{i}\Big(\frac{A_{ij}}{H^{2}}e^{\sigma x_{2}}\partial_{j}\tilde{w}\Big)
+\Big(\frac{A_{i2}}{\rho^{2}}\sigma-\frac{\rho_{2}\partial_{i}\psi}{\rho^{2}}-
\vartheta(|\nabla\psi|^{2},\psi)\partial_{i}\psi\Big)e^{\sigma
x_{2}}\partial_{i}\tilde{w} \leq0.
$$
This implies that \eqref{5.6} holds, so does inequality \eqref{5.1}.

\medskip
4. {\it Positivity on boundary.} We now show that
$\psi_{x_2}>0$ at $W_1\cup W_2$ in this step. Without loss of generality, we
prove it on $W_2$.

First, if $(\mathcal{S}\mathcal{B}^{-\gamma})'(m)<0$, since $\psi=m$
on $W_{2}$, then, for any $(x^{0}_{1},f_{2}(x^{0}_{1}))\in W_{2}$,
there exists a small disk $\mathcal{N}\subset \Omega$ satisfying
$\overline{\mathcal{N}}\cap\overline{\Omega}=(x^{0}_{1},f_{2}(x^{0}_{1}))$
such that
$\frac{d\ln(\mathcal{S}^{-\gamma}\mathcal{B})}{d\psi}\geq0$ in
$\mathcal{N}$, which implies
$$
A_{ij}(\nabla\psi,\psi)\partial_{ij}\psi>0 \qquad\mbox{ in $\mathcal{N}$}.
$$
Combining this with the fact that $\psi<m$, by the Hopf lemma, we have
$$
\psi_{x_{2}}(x^{0}_{1},f_{2}(x^{0}_{1}))>0.
$$

The remaining case is $(\mathcal{S}\mathcal{B}^{-\gamma})'(m)=0$. It
is easy to see that  $\psi$ satisfies
$$
A_{ij}(\nabla\psi,\psi)\partial_{ij}(\psi-m)-\rho_2|\nabla(\psi-m)|^2+R(\psi-m)=0
$$
with
$R=-\frac{\rho^2(\mathcal{B}'\rho-\frac{1}{\gamma}S'\rho^{\gamma-1})}{\psi-m}$.
By the Hopf lemma again, we have
$$
\partial_{x_2}\psi>0\qquad\text{in}\ W_2.
$$
Similarly, we can show that $\psi_{x_{2}}>0$ on $W_1$.
This completes the proof.
\end{proof}

%\medskip
\section{Proof of Theorem 1.1 Except the Critical Mass Flux}

We now prove Theorem 1.1 (Main Theorem of this paper), except the existence
part of the critical mass flux which will be
shown in Section 8 below.

\medskip
Let
$\delta_{0}:=\min\{\delta_{1},\delta_{2},\delta_{3},\delta_{4}\}>0$.
If
$\|(S-\underline{S},B-\underline{B})\|_{C^{1,1}([0,1])}\leq\delta$
with $0<\delta\leq \delta_{0}$, for any
$m\in(\delta^{\beta},2\delta^{\beta/2}_{0})$, there exists a
solution of {\it Problem 2}. It follows from Lemmas \ref{3} and
\ref{5} that the flow field induced by $\psi$ satisfies
\eqref{2.10}, and hence Proposition \ref{1} guarantees the existence
of Euler flows. Furthermore, {\it Propositions \ref{1} and \ref{6}} imply
the uniqueness of Euler flows with asymptotic \eqref{1.10} and
\eqref{1.12}, the mass flux condition \eqref{1.8}, and the
asymptotic behavior determined by \eqref{1.18}--\eqref{1.24}.

This completes the proof.

%\medskip
\section{Existence of the Critical Mass Flux}

In \S 5--7, we have shown that, for the given Bernoulli function and the entropy function
in the upstream satisfying \eqref{1.13}--\eqref{1.16a},
there exists a Euler flow, as long as $m\in(\delta^{\beta},2\delta^{\beta/2}_{0})$.
In this section, we  find the critical mass flux, which can be obtained by following the arguments
as in \cite{Bers1,Bers2,xx1}. For self-containedness, we give the proof in this section.

\noindent
\begin{proposition} Let the boundary $\partial\Omega$ satisfy \eqref{1.3}--\eqref{1.6},
$S(x_2)$ and $B(x_2)$ satisfy the asymptotic condition \eqref{1.10} and \eqref{1.12} for $x_2\in[0,1]$ respectively,
and let \eqref{1.16a} hold. Then there exists $\hat{m}\leq\bar{m}$ such that,
if $m\in(\delta^{\beta},\hat{m})$, there exits a unique $\psi$ of {\rm Problem 2} satisfying
\begin{eqnarray}
&&0<\psi<m\qquad \text{in}\ \Omega,\\
&& M(m):=\sup\limits_{\bar{\Omega}}\big\{|\nabla\psi|^{2}-(\gamma-1)\mathcal{S}(\psi)\rho^{\gamma+1}\big\}<0,
\end{eqnarray}
where
$\mathcal{B}(\psi)=\frac{u_0^{2}(\psi)}{2}+\mathcal{S}(\psi)\rho_0^{\gamma-1}(\psi)$.
Furthermore, either $M(m)\rightarrow0$ as $m\rightarrow\hat{m}$  or
there does not exist $\sigma>0$ such that \eqref{2.33} has solutions
for all $m\in(\hat{m},\hat{m}+\sigma)$ and \be\label{6.2}
\sup\limits_{m\in(\hat{m},\hat{m}+\sigma)}M(m)<0. \ee
\end{proposition}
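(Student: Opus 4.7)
The plan is to define $\hat{m}$ as the critical value
\[
\hat{m} := \sup\Big\{m^*\in (\delta^{\beta},\bar{m}] \,:\, \text{for every } m\in(\delta^{\beta},m^*),\,\text{Problem 2 admits a solution with } 0<\psi<m,\, M(m)<0\Big\},
\]
where $\bar{m}$ is the constant from Proposition 2.2. By the Main Theorem already proved in Sections 5--7, the set above contains the interval $(\delta^{\beta},2\delta_{0}^{\beta/2})$, so $\hat{m}\geq 2\delta_{0}^{\beta/2}$; by construction $\hat{m}\leq\bar{m}$. The uniqueness part of Proposition 5.2 (Proposition \ref{6}) shows that the solution on $(\delta^{\beta},\hat{m})$ is unique. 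Thus the existence/uniqueness half of the proposition is immediate.

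For the dichotomy, I would argue by contradiction: assume both that $\limsup_{m\to\hat{m}^-}M(m)<0$ \emph{and} that there exists $\sigma>0$ such that Problem 2 has a solution for every $m\in(\hat{m},\hat{m}+\sigma)$ with
\[
\mu := \sup_{m\in(\hat{m},\hat{m}+\sigma)}M(m)<0.
\]
Then combining with the assumption on $[0,\hat{m})$, one obtains a uniform bound $M(m)\leq -\epsilon_{0}<0$ for all $m$ in some punctured neighborhood of $\hat{m}$. This uniform subsonicity allows one to fix the cut-off parameter $\epsilon$ in \eqref{cut-off using} so small that $\tilde{\rho}=\rho$ on every solution, so equation \eqref{3.15} is uniformly elliptic with ellipticity constants depending only on $\epsilon_{0}$. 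The $C^{2,\alpha}$ estimate \eqref{3.28}, the far-field analysis of Lemma \ref{3}, and the maximum-principle bound $0\leq\psi\leq m$ of Proposition \ref{4} then apply uniformly in $m$.

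Next I would extract a limit: take any sequence $m_{k}\to\hat{m}$ (from above or below) and the corresponding stream functions $\psi_{k}$. By the uniform $C^{2,\alpha}$ estimates and a diagonal Arzela--Ascoli argument as in Step 4 of the proof of Proposition \ref{1}, a subsequence converges in $C^{2,\vartheta}_{\mathrm{loc}}(\overline{\Omega})$ to some $\psi_{\hat{m}}$, which solves Problem 2 at $m=\hat{m}$ and inherits $M(\hat{m})\leq -\epsilon_{0}<0$ together with the correct upstream/downstream asymptotics (since the constants $p_{0},p_{1}$ depend continuously on $m$ via Proposition 2.2). Uniqueness from Proposition \ref{6} implies independence of subsequence, so the full limit exists. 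Thus Problem 2 is solvable \emph{and} uniformly subsonic on the whole interval $(\delta^{\beta},\hat{m}+\sigma)$, which directly contradicts the definition of $\hat{m}$ as a supremum. Hence one of the two alternatives must fail, yielding the claimed dichotomy.

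The main obstacle is ensuring that the uniform ellipticity constants in the passage to the limit really depend only on the lower bound $\epsilon_{0}$ for $-M(m)$ and on the fixed data $(S,B,\gamma,\Omega,\bar{m})$, so that Proposition \ref{1}'s Schauder estimate \eqref{3.28} and the energy/Poincar\'e argument of Lemma \ref{3} can be applied with constants independent of $m$. This is subtle because the coefficients $\tilde{A}_{ij}$ and $\tilde{\rho}_{2}$ depend nonlinearly on $\psi$ and $\nabla\psi$, and one must verify that the near-sonic degeneracy in $\rho_{1}$ is controlled precisely by the uniform bound $M(m)\leq -\epsilon_{0}$. Once that uniform control is established, the continuation argument and the maximum principle (used to keep $0\leq\psi\leq m$ across the limit, via the sign condition \eqref{1.16a} exploited in Proposition \ref{4}) close the proof.
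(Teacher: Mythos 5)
Your construction is genuinely different from the paper's. The paper does not define $\hat m$ as the supremum of the solvability set for Problem 2; instead it runs a sequence of elliptic cut-off problems with parameters $\varepsilon_n\downarrow 0$, defines $M_n(m)$ as an \emph{infimum over all solutions} of the $n$-th truncated problem (precisely because uniqueness is only known within the uniformly subsonic class with the right far fields), proves left-continuity of $M_n$ on $(\delta^\beta,m_n]$ by the same compactness you invoke, shows $m_n=\sup T_n\le\bar m$ by an explicit computation that the far-field states degenerate to sonic as $m\to\bar m$, and sets $\hat m=\lim_n m_n$. The payoff of that route is that existence of genuine (non-truncated) solutions for every $m<\hat m$ is \emph{derived}: the cut-off is shown to be inactive because $M_n(m)\le-4\varepsilon_n$. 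In your formulation, existence on $(\delta^\beta,\hat m)$ holds by definition and all the mathematical content is pushed into the dichotomy, which is legitimate for the proposition as stated but proves nothing new about how far beyond $2\delta_0^{\beta/2}$ one can actually continue.

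There are two concrete gaps in the dichotomy argument. First, since $M\le 0$ always, the negation of ``$M(m)\to 0$ as $m\to\hat m$'' is $\liminf_{m\to\hat m^-}M(m)<0$, not $\limsup_{m\to\hat m^-}M(m)<0$; your claimed uniform bound $M(m)\le-\epsilon_0$ on a full punctured neighborhood of $\hat m$ does not follow. This is repairable --- a single sequence $m_k\uparrow\hat m$ with $M(m_k)\le-\epsilon_0$ suffices to run the Schauder/Arzela--Ascoli limit and produce a solution at $m=\hat m$ with $M(\hat m)\le-\epsilon_0$, and membership of $\hat m+\sigma$ in your defining set only requires pointwise (not uniform) negativity of $M$ for each $m<\hat m+\sigma$ --- but as written the step is wrong. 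Second, you never treat the endpoint case $\hat m=\bar m$: your defining set is capped at $\bar m$, so no contradiction arises from ``$\hat m+\sigma$ belongs to the set'' when $\hat m=\bar m$. One must argue separately that Problem 2 has no solutions for $m>\bar m$ (the far-field states $p_0,p_1$ of Proposition 2.2 cease to exist in the subsonic range), so the second alternative holds automatically there; the paper handles the analogous point by the chain of inequalities showing $\sup\big(|\nabla\psi|^2-(\gamma-1)\tilde{\mathcal S}\tilde\rho^{\gamma+1}\big)\ge 0$ at $m=\bar m$. Finally, note that local $C^{2,\vartheta}$ convergence does not by itself transfer the upstream/downstream asymptotics to the limit $\psi_{\hat m}$: the energy estimate of Lemma 4.1 must be rerun for the limit, and the continuity of $p_0,p_1$ in $m$ should be checked from \eqref{2.21} and \eqref{2.31} rather than asserted.
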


\begin{proof}
For the given entropy function $S$ and Bernoulli function $B$ in the upstream
satisfying \eqref{1.10} and \eqref{1.12} and any $m\in(\delta^{\beta},\bar{m})$,
one can define $\rho_{0}$ and $u_{0}(x_{2})$ as in \S 2.
Note that $\rho_{0}$ and $u_0$ depend on $m$ by definition; thus in this section
we  denote them by $\rho_{0}(m)$ and $u_0(\psi;m)$, respectively.

Let $\{\varepsilon_{n}\}^{\infty}_{n=1}$ be a strictly decreasing sequence of positive numbers
such that $\varepsilon_{1}\leq\varepsilon_{0}/4$ and $\varepsilon_{n}\downarrow0$.
We introduce
$$
\zeta_{n}(s)=
\begin{cases}
s \qquad &\text{if}\ s<-2\varepsilon_{n},\\
-\frac{3}{2}\epsilon_{n}\qquad &\text{if}\ s\geq-\varepsilon_{n}.
\end{cases}
$$
Then  $\zeta_n$ is an increasing smooth function.
We define
\begin{eqnarray*}
&&\tilde{\triangle}_{n}(|\nabla\psi|^{2},\psi;m)\\[1.5mm]
&&:=
\zeta_{n}(|\nabla\psi|^{2}-(\gamma-1)\tilde{\mathcal{S}}(\psi)\tilde{\rho}^{\gamma+1}(|\nabla\psi|,\psi;m))
+(\gamma-1)\tilde{\mathcal{S}}(\psi)\tilde{\rho}^{\gamma+1}(|\nabla\psi|,\psi;m).
\end{eqnarray*}
Then there exist two positive constants $\lambda(n)$ and $\Lambda(n)$ such that
$$
\lambda(n)|\xi|^{2}\leq \tilde{A}^{n}_{ij}(q,z;m)\xi_{i}\xi_{j}\leq\Lambda(n) |\xi|^{2}
$$
for any $z\in\mathbb{R}$, $q\in\mathbb{R}^{2}$, and $\xi\in\mathbb{R}^{2}$, where
$$ \tilde{A}^{(n)}_{ij}(q,z;m)=\tilde{\rho}^{n}(|q|^{2},z;m)\delta_{ij}-2\tilde{\rho}^{n}_{1}(|q|^{2},z;m)\xi_{i}\xi_{j}.
$$
Thus, for any $m\in(\delta^{\beta},\bar{m})$, there exists a solution $\psi^{n}(x;m)$ to the problem:
\bee\label{6.5}
\begin{cases}
 \tilde{A}^{(n)}_{ij}(q,z;m)\partial_{ij}\psi
=\mathcal{F}_{n}(\nabla\psi,\psi;m)\qquad  &\text{in}\ \Omega,\\[2mm]
\psi=\frac{x_{2}-f_{1}(x_{1})}{f_{2}(x_{1})-f_{1}(x_{1})}m\qquad &\text{on}\ \partial\Omega,
\end{cases}
\eee
where
$$
\mathcal{F}_{n}(\nabla\psi,\psi;m)=\big(\tilde{B}'%\tilde{\rho}^{n}
-\frac{1}{\gamma}\tilde{\mathcal{S}}'(\tilde{\rho}^{n})^{\gamma-1}\big)(\tilde{\rho}^{n})^2
+\tilde{\rho}^{(n)}_{2}|\nabla\psi|^2.
$$
Moreover, if
\be\label{6.6}
|\nabla\psi^{(n)}|^{2}-(\gamma-1)\tilde{\mathcal{S}}(\psi)\tilde{\rho}^{\gamma+1}
(|\nabla\psi|,\psi;m)\leq-2\varepsilon_{n},
\ee
then $\zeta'_{n}=1$. Similar to \S 3, we have
$$
0\leq\psi^{n}(x;m)\leq m.
$$
By the definition of  $\tilde{\mathcal{S}}$ and $\tilde{\mathcal{B}}$,
we  can estimate $I_{5}$ in \eqref{4.13}, which is independent of $\epsilon_{n}$.
Furthermore, it follows from  Lemma \ref{3} that the solution in \eqref{6.5} satisfying \eqref{6.6}
has the far field behavior as \eqref{4.6}.
In addition, by Proposition \ref{6}, such a solution is unique
in the class of solutions satisfying \eqref{4.6}.

Note that, in general, we do not know the uniqueness of solutions to
the boundary value problem \eqref{6.5}.
Set
\be
S_{n}(m)=\{\psi^{n}(x;m)\,:\, \psi^{n}(x;m)\ \text{solves problem \eqref{6.5}}\}.
\ee
Define
\be
M_{n}(m)=\inf\limits_{\psi^{n}\in S_{n}(m)}\sup\limits_{\bar{\Omega}}\big\{|\nabla\psi^{(n)}|^{2}
-(\gamma-1)\tilde{\mathcal{S}}(\psi)\tilde{\rho}^{\gamma+1}_n(|\nabla\psi|,\psi;m)\big\},
\ee
and
$$
T_{n}=\{s\, :\,\delta^{\beta}\leq s,\ M_{n}(m)\leq-4\varepsilon_{n}\ \text{if}\ m\in(\delta^{\beta},s)\}.
$$
It follow from Proposition \ref{1}, Lemma \ref{3}, and Proposition \ref{4} that
$$
[\delta_{0}^{\beta},2\delta_{0}^{\beta/2}]\subset T_{n},
$$
hence $T_{n}\neq\emptyset$. We define $m_{n}=\sup T_{n}$.

The sequence $\{m_{n}\}$ has the following properties:

1. $M_{n}(m)$ is left continuous for $m\in(\delta^{\beta},m_{n}]$.
Indeed, let $\{m^{(k)}_{n}\in(\delta^{\beta},m_{n})\}$ and $m^{(k)}_{n}\uparrow m$.
Since $M_n(m^{(k)}_{n})\leq-4\varepsilon_{n}$, we have
$$
\|\psi^{(n)}(x;m^{(k)}_{n})\|_{C^{2,\alpha}}\leq C(n).
$$
Therefore, there exists a subsequence $\psi^{(n)}(x;m^{(k_{l})}_{n})$ such that
$$
\psi^{(n)}(x;m^{(k_{l})}_{n})\rightarrow\psi.
$$
Moreover, $\psi$ solves \eqref{6.5}, and $M_{n}(m)\leq\lim\psi^{(n)}(x;m^{(k_{l})}_{n})$.
Thus,
$$
M_{n}(m)\leq-4\varepsilon_{n}.
$$
Note that all these solutions satisfy the far field behavior as \eqref{4.6},
by uniqueness of solutions in this class,
$$
M_{n}(m)=\lim\psi^{(n)}(x;m^{(k)}_{n}).
$$

2. $m_{n}\leq\bar{m}$: If this were not true, by the definition of $m_{n}$,
$\bar{m}\in T_{n}$. It follows from the left-continuity of $M_{n}(m)$ that
$$
M_{n}(\bar{m})\leq-4\varepsilon_{n}.
$$
Thus, by means of the proof of Lemma \ref{3}, $\psi^{n}(x;\bar{m})$ has far field behavior as in \eqref{4.6}.
However, it follows from the definition of $\bar{m}$ that
\begin{eqnarray*}
%\begin{array}{ll}
&&\sup\limits_{x\in\bar{\Omega}}\big\{|\nabla\psi^{(n)}|^{2}-
(\gamma-1)\tilde{\mathcal{S}}(\psi)\tilde{\rho}^{\gamma+1}_n(|\nabla\psi^{(n)}|,\psi^{(n)};m)\big\}\\
&&\geq \sup\limits_{s\in[0,1]}\max
 \left\{
 \begin{array}{ll}
 |\rho_{0}(s;\bar{m})u_{0}(s;\bar{m})|^{2}-(\gamma-1)S(s)\rho_0^{\gamma+1}(s;\bar{m}),\\[2mm]
 |\rho_{1}(s;\bar{m})u_{1}(s;\bar{m})|^{2}-(\gamma-1)S(s)\rho_1^{\gamma+1}(s;\bar{m})
 \end{array}
 \right\}\\
&&\geq \sup\limits_{s\in[0,1]}\max
 \left\{
 \begin{array}{ll}
 2B(s)\rho_0^2(s;\bar{m})-(\gamma+1)S(s)\rho_0^{\gamma+1}(s;\bar{m}),\\[2mm]
 2B(s)\rho_1^2(s;\bar{m})-(\gamma+1)S(s)\rho_1^{\gamma+1}(s;\bar{m})
 \end{array}
 \right\}\\
&&\geq
\sup\limits_{s\in[0,1]}\varrho^2(\bar{D};s)S^{\frac{1}{\gamma}}(s)
  \Big(2D(x_2)-(\gamma+1)\big(\frac{\gamma\mathfrak{p}(\bar{D})}{\gamma-1}\big)^{\frac{\gamma-1}{\gamma}}\Big)\\
&&\geq
\sup\limits_{s\in[0,1]}2\varrho^2(\bar{D};s)S^{\frac{1}{\gamma}}(s)
  \big(D(x_2)-\bar{D}\big)\\
&&=0,
\end{eqnarray*}
where $\rho_1(s;\bar{m})=\rho_1(y(s);\bar{m})$,
$u_1(s;\bar{m})=u_1(y(s);\bar{m})$, and $y(s)$ is the function
defined in \eqref{2.25}. Thus, $M_{n}(\bar{m})\geq0$. This is a
contradiction. Therefore, $m_{n}\leq\bar{m}$.

Finally,  $\{m_{n}\}$ is an increasing sequence,
which follows from the definition of $\{m_{n}\}$ directly.
Define
$$
\hat{m}=\lim\limits_{n\rightarrow\infty }m_{n}.
$$
Then $\hat{m}$ is well-defined and $\hat{m}\leq\bar{m}$.
Note that, for any $m\in(\delta^{\beta},\hat{m})$, there exists $m_{n}>m$ such that
$M_{n}(m)\leq-4\varepsilon_{n}$.
Thus,
$$
\phi=\psi^{(n)}(x;m)
$$
solves \eqref{2.33} and
$$
\sup\limits_{\bar{\Omega}}\big\{|\nabla\psi|^{2}
-(\gamma-1)\mathcal{S}(\psi)\tilde{\rho}^{\gamma+1}_n(|\nabla\psi^{(n)}|,\psi^{(n)};m)\big\}
=M_{n}(m)\leq-4\varepsilon_{n}.
$$
If $\sup\limits_{m\in(\delta^{\beta},\hat{m})}M_{n}(m)<0$, then there exists $n$ such that
$$
\sup\limits_{m\in(\delta^{\beta},\hat{m})}M_{n}(m)<-4\varepsilon_{n}.
$$
Then the same argument as the proof for the left continuity of $M_{n}(m)$
 on $(\delta^{\beta},m_{n}]$ yields
$$
M_{n}(\hat{m})<-4\varepsilon_{n}.
$$
Suppose that there exists $\sigma>0$ such that \eqref{2.33} always
has a solution $\psi$ for $m\in(\hat{m},\hat{m}+\sigma)$, and
\be
\sup\limits_{m\in(\hat{m},\hat{m}+\sigma)}M(m)<0.
\ee
Then there exists $k>0$ such that
$$
\sup\limits_{m\in(\hat{m},\hat{m}+\sigma)}M(m)=\sup\limits_{m\in(\hat{m},\hat{m}+\sigma)}
\big\{|\nabla\psi|^{2}-(\gamma-1)\mathcal{S}(\psi)\tilde{\rho}^{\gamma+1}(|\nabla\psi|,\psi;m)\big\}
<-4\varepsilon_{n+k}.
$$
This yields that $m_{n+k}\geq\hat{m}+\sigma$, which is a contradiction.
Thus, either $M(m)\rightarrow0$ or there does not exist $\sigma>0$ such that \eqref{2.33} has a solution
for all $m\in(\hat{m},\hat{m}+\sigma)$ and \eqref{6.2} holds.
This completes the proof.
\end{proof}

\bigskip
\textbf{Acknowledgements}.
The authors thank Myoungjean Bae for helpful discussions.
The research of
Gui-Qiang Chen was supported in part by the National Science
Foundation under Grants
DMS-0935967 and DMS-0807551, the UK EPSRC Science and Innovation
Award to the Oxford Centre for Nonlinear PDE (EP/E035027/1),
the NSFC under a joint project Grant 10728101, and
the Royal Society--Wolfson Research Merit Award (UK).
The research of Xuemei Deng  was supported in part by
by China Scholarship Council No.
2008631071 and by the EPSRC Science and
Innovation Award to the Oxford Centre for Nonlinear PDE
(EP/E035027/1).
The research of Wei Xiang was supported in part by
China Scholarship Council No.
 2009610055 and by the EPSRC Science and
Innovation Award to the Oxford Centre for Nonlinear PDE
(EP/E035027/1).

\bigskip


\begin{thebibliography}{99}

\bibitem{BF}
M. Bae and M. Feldman,
\emph{Transonic shocks in multidimensional divergent nozzles},
Arch. Rational Mech. Anal. {\bf 201} (2011), 777--841.


\bibitem{Bers1}
L. Bers,
\emph{Existence and uniqueness of a subsonic flow past a given profile}, Comm. Pure Appl.
Math. {\bf 7} (1954), 441--504.

\bibitem{Bers2}
L. Bers, \emph{Mathematical Aspects of Subsonic and Transonic Gas Dynamics},
John Wiley \& Sons, Inc.: New York, 1958.


\bibitem{CKL}
S. Canic, B. Keyfitz, and G.~M. Lieberman,
\emph{A proof of existence of perturbed steady transonic shocks via a free boundary problem},
Comm. Pure Appl. Math. {\bf 53} (2000), 484--511.

\bibitem{ChenFeldman0}
G.-Q. Chen and M. Feldman,
\emph{Steady transonic shocks and free boundary problems for the Euler equations
in infinite cylinders}, Comm. Pure Appl. Math. {\bf 57} (2004), 310--356.

\bibitem{ChenFeldman}
G.-Q. Chen and M. Feldman,
\emph{Existence and stability of multidimensional transonic flows through an infinite nozzle of
arbitrary cross-sections},
{Arch. Ration. Mech. Anal.} {\bf 184} (2007), 185--242.


\bibitem{CCF}
G.-Q. Chen, J. Chen, and M. Feldman,
\emph{Transonic nozzle flows and free boundary problems for the full Euler equations},
J. Math. Pures Appl. {\bf 88} (2007), 191--218.


\bibitem{CDSW}
G.-Q. Chen, C.~M. Dafermos, M. Slemrod, and D. Wang,
\emph{On two-dimensional sonic-subsonic flow},
Commun. Math. Phys. {\bf 271} (2007), 635--647.




\bibitem{JC} J. Chen,
\emph{Subsonic flows for the full Euler equations in half plane},
J. Hyper. Diff. Eqns. {\bf 14} (2009), 1--22.


\bibitem{CY}
S.-X. Chen and H. Yuan,
 \emph{Transonic shocks in compressible flow passing a duct for three-dimensional Euler systems},
 Arch. Ration. Mech. Anal. {\bf 187} (2008), 523--556.


\bibitem{CFr}
R. Courant and K. O. Friedrichs,
\emph{Supersonic Flow and Shock Waves}, Interscience Publishers,
Inc.: New York, 1948.

\bibitem{GT} D. Gilbarg and N. Trudinger,
\emph{Elliptic Partial Differential Equations of Second Order}, 2nd
edn. Springer-Verlag: Berlin, 1983.

\bibitem{GJLZ}
J. Glimm, X. Ji, J. Li, X. Li, P. Zhang, T. Zhang, and Y. Zheng,
 \emph{Transonic shock formation in a rarefaction Riemann problem for the 2D compressible
 Euler equations}, SIAM J. Appl. Math. {\bf 69} (2008), 720--742.


\bibitem{Kim}
E.~H. Kim,
\emph{Subsonic solutions for compressible transonic potential flows},
J. Diff. Eqns. {\bf 233} (2007), 276--290.


\bibitem{Serre}
D. Serre, \emph{Multidimensional shock interaction for a Chaplygin gas},
Arch. Ration. Mech. Anal. {\bf 191} (2009), 539--577.

\bibitem{xx1} C. Xie and Z. Xin,
\emph{Existence of global steady subsonic Euler flows through infinitely
long nozzles}, SIAM J. Math Anal. {\bf 42} (2010), 751--784;
\emph{Global subsonic and subsonic-sonic flows through infinitely long axially symmetric  nozzles},
J. Diff. Eqns. {\bf 248} (2010), 2657--2683.

\bibitem{Yuan}
H. Yuan,
\emph{Examples of steady subsonic flows in a convergent-divergent approximate nozzle},
J. Diff. Eqns. {\bf 244} (2008), 1675--1691.
\end{thebibliography}
\end{document}